\title{Boundary Algebras of Positroids}
\author{Jonah Berggren and Khrystyna Serhiyenko}
\date{}
\newtheorem{thm}{Theorem}[section]
\newtheorem{prop}[thm]{Proposition}
\newtheorem{lemma}[thm]{Lemma}
\newtheorem{cor}[thm]{Corollary}
\newtheorem{thmIntro}{Theorem}    
\theoremstyle{definition}
\newtheorem{exIntro}[thmIntro]{Example}    
\newtheorem{defn}[thm]{Definition}
\newtheorem{remk}[thm]{Remark}
\newtheorem{example}[thm]{Example}
\newcommand{\reach}{\text{reach}}
\newcommand{\Le}{\text{Le}}
\newcommand{\A}{\mathscr A}
\newcommand{\CL}{\textup{CL}}
\newcommand{\CC}{\textup{CC}}
\renewcommand{\k}{\mathbb C}
\renewcommand{\AA}{\mathscr A}
\renewcommand{\P}{\mathcal P}
\newcommand{\GP}{\textup{GP}}
\newcommand{\Gr}{\textup{Gr}}
\newcommand{\X}{X}
\newcommand{\Y}{Y}
\begin{document}

\maketitle

\begin{abstract}
	A dimer model is a quiver with faces embedded into a disk. A consistent dimer model gives rise to a strand diagram, and hence to a positroid.
	The Gorenstein-projective module category over the completed boundary algebra of a dimer model was shown by Pressland to categorify a cluster structure 
	 on the corresponding positroid variety.
	Outside of the Grassmannian case, boundary algebras of dimer models are not well understood. We give an explicit description of the boundary algebra of a consistent dimer model as a quiver with relations calculated only from the data of the decorated permutation or, equivalently, Grassmann necklace of its positroid.
\end{abstract}

\section{Introduction}

A dimer model is a quiver with faces embedded into a surface. {Dimer models on tori and other closed surfaces were introduced to study phase transitions in physics and are connected to numerous topics in combinatorics, representation theory, and algebraic geometry; see the survey~\cite{ZBocklandt2015} and the references therein. In this paper, we focus on dimer models on disks and their connections to cluster structures on positroid varieties.} In the following, all dimer models are assumed to be on disks. Associated to a dimer model $Q$ is a \emph{dimer algebra} $A_Q$, obtained as a quotient of the path algebra $\k Q$. The \emph{boundary algebra} $B_Q$ of $Q$ is the subalgebra of $A_Q$ induced by (equivalence classes of) paths starting and ending at boundary vertices.
A dimer model gives rise to a \emph{strand diagram}, which  is a collection of strands in a disk; following these strands gives a decorated permutation, and hence a positroid $\mathcal P_Q$.
Of particular interest are \emph{consistent} dimer models, which (among other things) give connected decorated permutations and positroids.

Let $\mathcal P$ be a positroid with basis system $\mathcal B_\P\subseteq{[n]\choose k}$ and Grassmann necklace~\cite[Definition 16.1]{ZPostnikov} $\mathcal I_\P$. 
The \emph{(open) positroid variety} $\Pi^\circ(\mathcal P)$, introduced and connected with the Grassmannian and its stratifications by Knutson-Lam-Speyer~\cite{KLSZ}, is the subvariety of the Grassmannian $\Gr(k,n)$ defined by the vanishing of Pl\"ucker coordinates not in $\mathcal B_\P$ and the nonvanishing of Pl\"ucker coordinates in $\mathcal I_\P$. We denote the affine cone over this variety by $\widetilde\Pi^\circ(\mathcal P)$.
Recently, a number of works have developed cluster structures on positroid varieties and connected them to consistent dimer models.

When $\mathcal P$ is the uniform $(k,n)$-positroid, {the positroid variety $\Pi^\circ(\mathcal P)$ is the Grassmannian $\text{Gr}(k,n)$.} This special case received attention first.
Scott~\cite{ZScott} showed that the homogeneous coordinate ring of the Grassmannian $\text{Gr}(k,n)$ is a cluster algebra, in which certain seeds are indexed by $(k,n)$-Postnikov diagrams (i.e., strand diagrams with $n$ marked boundary points and no bad configurations whose decorated permutations send $j\mapsto j-k$).
Gei\ss-Leclerc-Schroer~\cite{GLSZ} gave a categorification for this cluster structure, and Jensen-King-Su~\cite{ZJKS} phrased this categorification as the Gorenstein-projective module category over a bound path algebra $\k Q_{k,n}/I_{k,n}$ of the \emph{circle quiver} $Q_{k,n}$.
\nocite{DLZ}
Baur-King-Marsh~\cite{ZBKM} connected this story back to strand diagrams and dimer models by defining the \emph{boundary algebra} $B_Q=eA_Qe$ of a dimer model $Q$, where $A_Q$ is the dimer algebra and $e$ is the idempotent given by the sum of all boundary vertices. They showed that the bound path algebra $\k Q_{k,n}/I_{k,n}$ is isomorphic to the boundary algebra $B_Q$ of any consistent dimer model whose decorated permutation sends $j\mapsto j-k$.

This story was then extended to the case where $\mathcal P$ is a general connected $(k,n)$-positroid. For convenience, we make the additional assumption that $n\geq3$ (the case $n=2$ includes a small number of degenerate examples~\cite[Remark 2.2]{ZPressland2015}).
Galashin-Lam~\cite{GLZ}, generalizing work~\cite{SSWZ} on Schubert varieties, showed that the open positroid variety $\widetilde\Pi^\circ(\mathcal P)$ has the structure of a cluster algebra $\AA_Q$. Both of these works relied heavily on results of Leclerc~\cite{ZZL} for Richardson varieties.
Pressland~\cite[Theorem 6.11]{ZPressland2019} showed that this cluster structure may be obtained as the Gorenstein-projective module category $\GP(\widehat B_Q)$. The category $\GP(\widehat B_Q)$ is an additive categorification of the cluster algebra $\AA_Q$ whose initial seed is given by the cluster-tilting object $eA_Q$.
The Fu-Keller cluster character~\cite{FKZ} provides a bijection between the reachable cluster-tilting objects of $(\textup{GP}(\widehat B_Q),eA_Q)$ and the cluster variables of $\A_Q$.

This categorification may be used to better understand the Galashin-Lam cluster structure on positroid varieties.
Recently, Pressland used it to prove the conjecture of Muller and Speyer~\cite{ZZMS} that two complementary cluster structures on positroid varieties, coming from labelling the regions of the strand diagram using sources or targets of strands, quasi-coincide~\cite{ZZP}. A key ingredient in this proof comes from the categorical interpretation of perfect matchings and twists developed in~\cite{CKP}.
{Other recent work focuses on the Grassmannian case; in particular, a series of works by Baur, Bogdanic, Elsener, and Li~\cite{BBEL,ZZBBE,ZZBBL} describe the Gorenstein-projective modules over the circle algebra $\k Q_{k,n}/I_{k,n}$ corresponding to rank 2 and 3 cluster variables.}

On the other hand, one limitation to using this categorification for general positroid varieties is that outside of the case where $\mathcal P_Q$ is uniform, the boundary algebras $B_Q$ are not well understood.
In this paper, we give a description of the boundary algebra $B_Q$ as a quiver with relations calculated from the decorated permutation or Grassmann necklace of $Q$. This is an important step in understanding the additive categorification given by $\GP(\widehat B_Q)$.

Let $Q$ be a consistent dimer model with decorated permutation $\pi$. A path $p$ of $Q$ is \emph{minimal} if no path equivalent to $p$ contains a cycle of $Q$. We number the boundary vertices of $Q$ from 1 to $n$ increasing clockwise. We let $x_i:i\to i+1$ and $y_i:i+1\to i$ be the minimal clockwise and counter-clockwise paths between cyclically adjacent vertices. For convenience, we notate $x_i^m:=x_ix_{i+1}\dots x_{i+m-1}$ and $y_i^m:=y_iy_{i-1}\dots y_{i-m+1}$. If $p$ is a path of $Q$ starting and ending at boundary vertices $t(p)$ and $h(p)$, then we define $\reach_y(p)\in[n]$ so that $y_{t(p)-1}^{\reach_y(p)}$ is a path from $t(p)$ to $h(p)$. We define the \emph{relation number} ${\Y}(p)$ of a minimal path $p$ such that $[p(xy)^{{\Y}(p)}]=[y_{t(p)-1}^{\reach_y(p)}]$. Intuitively, the relation number ${\Y}(p)$ measures how far $p$ is from being equivalent to a composition of $y_i$'s. One may similarly define ${\X}(p)$ so that $[p(xy)^{{\X}(p)}]=[x_{t(p)}^{\reach_x(p)}]$.

We label the marked boundary points of the strand diagram of $Q$ from 1 to $n$ such that the strand-vertex $i$ is immediately clockwise of the vertex $i\in Q_0$; see the left of Figure~\ref{fig:intro}. If $v\in Q_0$ is a boundary vertex, we write $v^{cl}$ and $v^{cc}$ for the strand-vertices immediately clockwise and counter-clockwise of $v$, respectively. In the below theorem, all intervals are modulo $[n]$, so that $(v_2,v_1)=\{v_2+1,v_2+2,\dots,v_1-1\}$ (modulo $n$).

The following result gives a purely combinatorial characterization of the arrow-defining paths of $Q$ (i.e., the paths of $Q$ giving rise to arrows of the Gabriel quiver of the boundary algebra $B_Q$) and gives us a relation for each one. We will then obtain a representation of the boundary algebra in terms of only this information.

\begin{thmIntro}[{Theorem~\ref{thm:main-perm}}]\label{thm:0}
	Let $Q$ be a consistent dimer model with decorated permutation $\pi$. Let $v_1$ and $v_2$ be distinct elements of $[n]$ (considered as boundary vertices of $Q$). The minimal path $p$ from $v_1$ to $v_2$ is arrow-defining if and only if
	\begin{enumerate}
		\item\label{IGF1E} for every $w\in(v_2,v_1)$, there is a number $j\in[v_2^{cl},w^{cc}]$ such that $\pi(j)\in[w^{cl},v_1^{cc}]$, and
		\item\label{IGF2E} for every $w\in(v_1,v_2)$, there is a number $j\in[w^{cl},v_2^{cc}]$ such that $\pi(j)\in[v_1^{cl},w^{cc}]$.
	\end{enumerate}
	In this case, ${\Y}(p)=\#\{i\in[n]\ :\ v_2^{cl}\leq i<\pi(i)\leq v_1^{cc}\textup{ is a clockwise ordering}\}$ and 
	${\X}(p)=\#\{j\in[n]\ :\ v_1^{cl}\leq\pi(j)<j\leq v_2^{cc}\textup{ is a clockwise ordering}\}$.
\end{thmIntro}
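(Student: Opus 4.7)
The plan is to translate the question into the geometry of the strand diagram. Each strand $i$ connects the marked boundary point $i$ to $\pi(i)$, and the minimal path $p$ from $v_1$ to $v_2$ corresponds to a canonical ``minimal wedge'' bounded by strands in the disk. Arrow-defining paths are, by the standard characterization of the Gabriel quiver of a path algebra with relations, precisely the minimal paths that cannot be written as a composition $p_1 p_2$ of two nontrivial minimal paths passing through an intermediate boundary vertex. So the whole statement reduces to the following dichotomy: a minimal path $p\colon v_1\to v_2$ factors through a boundary vertex $w$ iff the strand-theoretic condition (\ref{IGF1E}) (if $w\in(v_2,v_1)$) or (\ref{IGF2E}) (if $w\in(v_1,v_2)$) fails at $w$.

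I would set up the correspondence between paths and strand crossings first. A minimal path crosses each strand at most once, and the set of strands it crosses is determined by the endpoints $v_1,v_2$ together with the ``side'' on which the wedge sits; this is where the indices $v_i^{cl}$ and $v_i^{cc}$ enter, since they mark the strand-endpoints adjacent to the boundary vertex $v_i$. Then, for a putative intermediate vertex $w\in(v_2,v_1)$, I would show: the attempted decomposition $v_1\to w\to v_2$ through $w$ is blocked exactly when there is a strand that the minimal wedge from $v_1$ to $v_2$ crosses but which ``separates'' $w$ from both endpoints, and this is precisely the existence of a $j\in[v_2^{cl},w^{cc}]$ with $\pi(j)\in[w^{cl},v_1^{cc}]$ (in clockwise order). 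The symmetric statement for $w\in(v_1,v_2)$ yields (\ref{IGF2E}).

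For the ``only if'' direction, I would argue contrapositively: if no such separating strand exists at some $w$, construct an explicit minimal factorization $p\sim q_1 q_2$ with $q_1\colon v_1\to w$ and $q_2\colon w\to v_2$ each nontrivial and minimal, by gluing the two smaller wedges along a shared strand-free interface near $w$. For the ``if'' direction, assume both conditions and suppose for contradiction that $p = q_1 q_2$ through some $w$; then the separating strand provided by the hypothesis must be crossed by either $q_1$ or $q_2$ but not in the correct local picture, contradicting minimality of $q_1 q_2$ or of $p$ itself. Care must be taken with the boundary cases where $w^{cl}=v_1^{cc}$ or $w^{cc}=v_2^{cl}$, since then the interval collapses and the conditions reduce to cyclic adjacency, corresponding to the $x_i$ and $y_i$ arrows.

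The relation-number formulas should then be read off by counting strands crossing the minimal wedge with a specified orientation. Specifically, $\Y(p)$ measures how many extra clockwise/counterclockwise cycles $(xy)$ need to be appended to $p$ to rewrite it as a pure $y$-composition $y_{t(p)-1}^{\reach_y(p)}$; each such cycle is forced by a strand whose source $i$ lies in $[v_2^{cl},v_1^{cc}]$ and whose target $\pi(i)$ lies further clockwise in the same interval, i.e.\ $v_2^{cl}\leq i<\pi(i)\leq v_1^{cc}$, and conversely each such strand contributes one cycle. The $\X(p)$ formula follows by the same argument applied to the opposite rewriting. I expect the main obstacle to be the careful combinatorial bookkeeping in the factorization argument --- identifying exactly which strand separates $w$ from the endpoints and making sure the local picture near $w$ admits an actual factorization in the dimer model --- since this is where strand-diagram geometry, the consistency of $Q$, and the definition of minimality all need to be used simultaneously.
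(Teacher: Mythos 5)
Your strategy in outline --- pass to strand-diagram geometry, characterize arrow-defining paths via non-factorability through intermediate boundary vertices, count strands for the relation numbers --- is the paper's, and you have even identified the correct set to count for ${\Y}(p)$ (the strands $z_i$ with $v_2^{cl}\leq i<\pi(i)\leq v_1^{cc}$, which is $\CL(v_2,v_1)$). But one piece of geometric intuition is off and one essential step is missing. First, those strands are never \emph{crossed} by a minimal path from $v_1$ to $v_2$: both $v_1$ and $v_2$ lie on the same side of every strand of $\CL(v_2,v_1)$. They instead bound the rightmost minimal path from outside; the paper formalizes this via right-supporting strands (Definition~\ref{defn:rss}, Proposition~\ref{lem:rs-enough}), and the equality ${\Y}(p)=|\CL(v_2,v_1)|$ is proved by an induction that peels off one such strand at a time (Theorem~\ref{thm:rightmost-relation-num}). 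Your ``strands crossing the minimal wedge'' heuristic points at the wrong strands for the relation-number computation.

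More seriously, the bridge from strand geometry to the purely permutational conditions~\eqref{IGF1E}--\eqref{IGF2E} is where the paper's real work lies, and your sketch elides it. Theorem~\ref{thm:main-strand} first characterizes arrow-defining paths via ``$(v_1,v_2)$-extremal connectedness'' of $\CL$ and $\CC$, a property of how strands actually wind and intersect in the disk, not merely of their endpoints. The reduction to conditions on $\pi$ alone is Corollary~\ref{cor:cr}, whose proof passes to a Le-diagram representative of $\pi$ in which any two strands of $\CL$ (resp.\ $\CC$) intersect at most once (Lemma~\ref{lem:fdg}) and then transports the conclusion back via the isomorphism of boundary algebras (Proposition~\ref{prop:ba-from-perm}). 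Your ``glue two wedges along a strand-free interface near $w$'' factorization implicitly assumes that a strand with both endpoints on one side of $w$ cannot interpose itself near $w$; and the other direction needs that extremal connectedness in an arbitrary consistent model forces a strand with $w$ to its left, which is exactly the nontrivial implication Figure~\ref{fig:FXFX} is there to illustrate. You would need to rediscover Corollary~\ref{cor:cr}, or an equivalent normalization device, to close the proof.
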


{Theorem~\ref{thm:0} is proven by first characterizing minimal rightmost and leftmost paths of $Q$ in terms of the strand diagram, and then observing that we may understand when these paths factor through extra boundary vertices by looking only at the underlying strand permutation.
}
Condition~\eqref{IGF1E} of Theorem~\ref{thm:0} states precisely that for any vertex $w$ clockwise of $v_2$ and counter-clockwise of $v_1$, there is some strand $z$ such that $v_1$ and $v_2$ are to the right of $z$, but $w$ is to the left of $z$. Similarly, condition~\eqref{IGF2E} states that for a vertex $w\in (v_1,v_2)$, there is some strand $z$ such that $v_1$ and $v_2$ are to the left of $z$, but $w$ is to the right of $z$. This view lends to a nice interpretation of Theorem~\ref{thm:0} in terms of Grassmann necklaces:
\begin{thmIntro}[{Corollary~\ref{cor:main-neck}}]\label{thm:0.5}
	Let $Q$ be a consistent dimer model with Grassmann necklace $\mathcal I_Q=(I_1,\dots,I_n)$.
	Let $v_1$ and ${v_2}$ be distinct elements of $[n]$. There is an arrow-defining path $p$ from $v_1$ to $v_2$ if and only if
	\begin{enumerate}
		\item\label{FG1intro} for any $w\in(v_2,v_1)$, we have $I_w\not\supseteq I_{v_1}\cap I_{v_2}$, and
		\item\label{FG2intro} for any $w\in(v_1,v_2)$, we have $I_w\not\subseteq I_{v_1}\cup I_{v_2}$.
	\end{enumerate}
	The relation numbers of $p$ may be calculated as ${\Y}(p)=\big|[v_2,v_1)\cap I_{v_1}\cap I_{v_2}\big|$ and ${\X}(p)=\big|[v_1,v_2)\backslash\big(I_{v_1}\cup I_{v_w}\big)\big|$.
\end{thmIntro}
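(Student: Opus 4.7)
The plan is to derive Corollary~\ref{cor:main-neck} directly from Theorem~\ref{thm:main-perm} via the standard dictionary relating the Grassmann necklace to the strand diagram. Concretely, for each strand $z_j$ of $Q$ with source (strand-vertex) $j$ and target $\pi(j)=i$, and each boundary vertex $v\in Q_0$, one has $i\in I_v$ if and only if $v$ lies on the right-hand side of $z_j$ when traversed from source to target, equivalently, in the clockwise boundary arc from the target back to the source. This is immediate from the usual formula for the Grassmann necklace in terms of the decorated permutation.

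Using this dictionary, conditions~\eqref{IGF1E} and~\eqref{IGF2E} of Theorem~\ref{thm:main-perm} translate directly. The discussion preceding Corollary~\ref{cor:main-neck} already identifies~\eqref{IGF1E} as asserting that for every $w\in(v_2,v_1)$ there is a strand with $v_1,v_2$ on its right and $w$ on its left; by the dictionary, such a strand exists if and only if some $i\in I_{v_1}\cap I_{v_2}$ fails to lie in $I_w$, i.e., $I_{v_1}\cap I_{v_2}\not\subseteq I_w$. Interchanging left and right (equivalently, replacing each $I_w$ by its complement in $[n]$) turns~\eqref{IGF2E} into~\eqref{FG2intro}.

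For the relation numbers, I would build direct bijections. Theorem~\ref{thm:main-perm} expresses $\Y(p)$ as the number of strand sources $i$ with both $i$ and $\pi(i)$ in the strand-vertex arc $[v_2^{cl},v_1^{cc}]$ and $i$ clockwise-before $\pi(i)$ along that arc. For any such strand, a direct check of the cyclic intervals shows that both $v_1$ and $v_2$ sit on its right, so $\pi(i)\in[v_2,v_1)\cap I_{v_1}\cap I_{v_2}$. Conversely, given $m$ in this intersection, setting $j=\pi^{-1}(m)$ and applying the dictionary forces $v_1,v_2$ on the right of $z_j$, which combined with $m\in[v_2,v_1)$ forces $j\in[v_2,v_1)$ with $j$ clockwise-before $m$. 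Hence $i\mapsto\pi(i)$ is a bijection, proving $\Y(p)=|[v_2,v_1)\cap I_{v_1}\cap I_{v_2}|$. The formula for $\X(p)$ is obtained by the mirror argument: one bijects with strands in the opposite arc $[v_1^{cl},v_2^{cc}]$ whose right side excludes both $v_1$ and $v_2$, so that $\pi(j)\notin I_{v_1}\cup I_{v_2}$.

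The main obstacle is the bookkeeping: I must carefully verify that the cyclic-interval description of the sides of a strand aligns with $I_w$-membership and non-membership, and that the ``clockwise-before'' ordering required by Theorem~\ref{thm:main-perm} corresponds precisely to containment in $I_{v_1}\cap I_{v_2}$ (respectively, exclusion from $I_{v_1}\cup I_{v_2}$). Fixed points of $\pi$ (loops and coloops) deserve a brief separate check, but since $v_1\neq v_2$ they cannot perturb the counts.
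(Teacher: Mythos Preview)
Your overall strategy---derive the necklace statement from Theorem~\ref{thm:main-perm} by translating strand conditions into necklace membership---is exactly the paper's approach. However, your dictionary is indexed the wrong way. In this paper, the Grassmann necklace is defined by
\[
I_v=\{j\in[n]:\text{boundary vertex }v\text{ is to the right of the strand }z_j\text{ starting at }j\},
\]
so $I_v$ records \emph{sources}, not targets. Your stated dictionary ``$\pi(j)\in I_v$ iff $v$ is on the right of $z_j$'' is false here. A quick check in the running example ($\pi=256134$, $v_1=3$, $v_2=1$): the unique clockwise strand contributing to $\Y(p)$ is $z_1:1\to 2$, and indeed $1\in I_1\cap I_3=\{1,3\}$ while its target $2\notin I_1\cap I_3$.

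Consequently, your bijection $i\mapsto\pi(i)$ for the relation numbers is wrong; the correct identification is the identity map. The set $\{i:v_2^{cl}\le i<\pi(i)\le v_1^{cc}\}$ is \emph{equal} to $[v_2,v_1)\cap I_{v_1}\cap I_{v_2}$, because for such a strand both $v_1$ and $v_2$ lie in the clockwise arc $(\pi(i),i]$, which is precisely the condition $i\in I_{v_1}\cap I_{v_2}$, and $i\in[v_2^{cl},v_1^{cc}]=[v_2,v_1)$. For conditions~\eqref{FG1intro} and~\eqref{FG2intro} your conclusion ``$I_{v_1}\cap I_{v_2}\not\subseteq I_w$'' is correct, but the witness is again the source $j$, not the target. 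Once you swap target for source throughout, the bookkeeping you flagged as the main obstacle becomes a one-line check (as in the paper's proof), and no separate bijection or fixed-point analysis is needed.
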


Theorem~\ref{thm:0} or~\ref{thm:0.5} may be used to find all of the arrows of the Gabriel quiver of $B_Q$ and their relation numbers using the decorated permutation or Grassmann necklace of $Q$.
We then show that the information of the \emph{nonadjacent} arrows of $B_Q$ and their relation numbers is sufficient to calculate the boundary algebra.

Dimer algebras and boundary algebras of consistent dimer models are \emph{cancellative}, meaning that
for paths $p,q,a,b$ of $Q$ which compose appropriately, we have $[ap]=[aq]\iff [p]=[q]$ and $[pb]=[qb]\iff [p]=[q]$. Given an ideal $I$ of a path algebra $\k Q$, we say that the \emph{cancellative closure} $I^\bullet$ of $I$ is the smallest ideal $I^\bullet\supseteq I$ such that $\k Q/I^\bullet$ is cancellative.

We now give a presentation of the boundary algebra $B_Q$ by defining a new path algebra with relations $\k Q_\circ^\pi/I_\circ^\pi$.
Let $Q_{k,n}$ be the \emph{circle quiver} on vertex set $[n]$ such that for each $i$ there is an arrow $x_i:i\to i+1$ and an arrow $y_i:i+1\to i$ (we are abusing notation by also using, for example, $x_i$ to refer to the minimal path from $i$ to $i+1$ in the dimer algebra $A_Q$). In the path algebra $\k Q_{k,n}$, we write $x=\sum_{i\in[n]}x_i$ and $y=\sum_{i\in[n]}y_i$. Baur, King, and Marsh showed in~\cite[Corollary 10.4]{ZBKM} that the boundary algebra of a consistent dimer model whose decorated permutation sends $j\mapsto j-k$ is isomorphic to $\k Q_{k,n}/I_{k,n}$, where $I_{k,n}$ is the ideal generated by the relations $[xy]-[yx]$ and $[x^k]-[y^{n-k}]$.
We concern ourselves with the general case. Let $k$ be the number of noninversions of the decorated permutation $\pi$ of $Q$.
Let $Q^{\pi}_{\circ}$ be the quiver obtained by starting with the circle quiver $Q_{k,n}$ and drawing an arrow $\alpha_p:t(p)\to h(p)$ for every nonadjacent arrow-defining path $[p]$ of $Q$, up to path-equivalence. Let $I^{\pi}_{\circ}$ be the cancellative closure of the ideal generated by the Grassmannian relations $[xy]-[yx]$ and $[x^k]-[y^{n-k}]$ and the {nonadjacent relations}:
\[I^{\pi}_{\circ}=(\{[xy]-[yx],\ [x^k]-[y^{n-k}],\ \sum_{p\in\textup{NBP}(Q)}[y_{t(p)-1}^{\reach_y(p)}]-[p(xy)^{{\Y}(p)}]\})^\bullet,\]
where $\textup{NBP}(Q)$ is the set of nonadjacent arrow-defining paths of $Q$ up to equivalence. One may symmetrically define $I_\circ^\pi$ in terms of the relations given by ${\X}(P)$.
\begin{thmIntro}\label{thm:1}
	The boundary algebra $B_Q$ is isomorphic to $B^\pi_\circ:=\k Q_\circ^\pi/I_\circ^\pi$.
\end{thmIntro}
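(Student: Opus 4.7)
The plan is to define a $\k$-algebra homomorphism $\varphi: \k Q_\circ^\pi \to B_Q$ by $e_i \mapsto e_i$, $x_i \mapsto [x_i]$, $y_i \mapsto [y_i]$, and $\alpha_p \mapsto [p]$ for each nonadjacent arrow-defining path $p$, and then to show the induced map $\bar\varphi: B_\circ^\pi \to B_Q$ is an isomorphism. The first step is verifying $\varphi(I_\circ^\pi) = 0$. The commutation $[xy] = [yx]$ already holds in $B_Q$ because it is enforced by the face relations of $A_Q$ restricted to boundary-to-boundary paths. The wrap relation $[x^k] = [y^{n-k}]$ follows from $k$ being the common cardinality of the elements of the Grassmann necklace of $Q$, together with the observation that $x^k$ and $y^{n-k}$ both represent a single full clockwise traversal of the boundary in the dimer algebra. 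The nonadjacent relations hold by the very definition of ${\Y}(p)$ recorded in Theorem~\ref{thm:0}. Since $B_Q$ is cancellative by consistency of $Q$, the universal property of the cancellative closure $I_\circ^\pi$ produces the induced map $\bar\varphi$.

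Surjectivity of $\bar\varphi$ is then immediate: $B_Q$ is generated as a $\k$-algebra by the classes of arrow-defining paths (by the definition of the Gabriel quiver), and these are precisely the images of the arrows of $Q_\circ^\pi$ by construction, using Theorem~\ref{thm:0} to identify the two sets.

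The main obstacle is injectivity. My approach is to establish a canonical-form reduction on each corner $e_{v_1} B_\circ^\pi e_{v_2}$ and to match it with an analogous canonical family in $e_{v_1} B_Q e_{v_2}$. Using $[xy] = [yx]$ together with the nonadjacent relation for each $\alpha_p$ appearing in a word, every occurrence of a nonadjacent generator may be exchanged for a product of $y_i$'s at the cost of additional $(xy)$-factors, and these $(xy)$-factors can then be commuted to one end of the word. The wrap relation $[x^k] = [y^{n-k}]$ normalizes the cyclic winding, giving every class in $e_{v_1} B_\circ^\pi e_{v_2}$ a representative of the form $[(xy)^m z]$ where $z$ is a purely clockwise or counter-clockwise minimal path. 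The parallel analysis of minimal leftmost and rightmost paths powering the proof of Theorem~\ref{thm:0}, combined with cancellativity of $B_Q$, produces the same canonical family in $e_{v_1} B_Q e_{v_2}$; the map $\bar\varphi$ then sends one to the other, with the winding exponent $m$ pinned down by ${\Y}(p)$ and the residual path by $\reach_y(p)$.

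The hardest part will be justifying that the cancellative closure $I_\circ^\pi$ really contains enough relations to execute this reduction for words with multiple $\alpha_p$'s in arbitrary positions; the local exchange $\alpha_p(xy)^{{\Y}(p)} = y_{t(p)-1}^{\reach_y(p)}$ must be applied globally, including when a matching $(xy)^{{\Y}(p)}$ factor is not already present. This is where the cancellative property of $B_\circ^\pi$ by design, and of $B_Q$ by consistency of $Q$, becomes essential: it permits the free insertion and cancellation of common factors required to bring an arbitrary word into canonical form without ever leaving the ideal $I_\circ^\pi$, thereby matching the two canonical families and completing the isomorphism.
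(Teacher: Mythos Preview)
Your plan is correct and matches the paper's own proof closely: both first observe that the defining relations of $I_\circ^\pi$ hold in $B_Q$ (giving the surjection $\bar\varphi$), and then for injectivity reduce any putative extra relation to one in $x$'s and $y$'s alone by multiplying through by a large enough power of $(xy)$ and substituting each $\alpha_p(xy)^{Y(p)}$ by $y^{\reach_y(p)}$, before finishing with the Grassmannian relations, cancellativity, and the fact that a nonconstant path is never equivalent to a constant one in $A_Q$. The paper phrases the injectivity step as a proof by contradiction on a relation $[p]-[q]\in J\setminus I_\circ^\pi$ rather than as a canonical-form matching, but the underlying computation---including the crucial ``insert $(xy)^M$ then cancel'' maneuver you flag as the hardest part---is identical.
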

Note that Theorem~\ref{thm:0} gives a way to calculate elements $p\in\textup{NBP}(Q)$ and their relation numbers ${\Y}(p)$ based only on the permutation $\pi$, hence we may calculate $Q_\circ^\pi$ and $I_\circ^\pi$ based only on the decorated permutation $\pi$, justifying the notation.
The ideal $I^\pi_\circ$ may not be admissible, as certain adjacent paths $x_i$ and $y_i$ may fail to be arrows of $Q_\circ^\pi$. This issue may be detected through the strand diagram using Theorem~\ref{thm:0}. Hence, to obtain an admissible bound path algebra one may remove the adjacent arrows $x_i$ and $y_i$ of $Q^\pi_\circ$ which are not arrows of the Gabriel quiver of boundary algebra $B_Q$ and adjust the ideal $I^\pi_\circ$ accordingly.

We point out that Theorem~\ref{thm:1} implies that the pair $(k,n)$ along with the set of nonadjacent arrows of the Gabriel quiver of $B_Q$ and their relation numbers is sufficient to recover the boundary algebra.
We will study this further in future work.
The usage of a cancellative closure in the definition of $I_\circ^\pi$ means that we do not in general have a minimal generating set of the relations of $I_\circ^\pi$. This will also be addressed in future work.

\begin{exIntro}
	\begin{figure}[H]
		\centering
		\def\svgscale{0.21}
		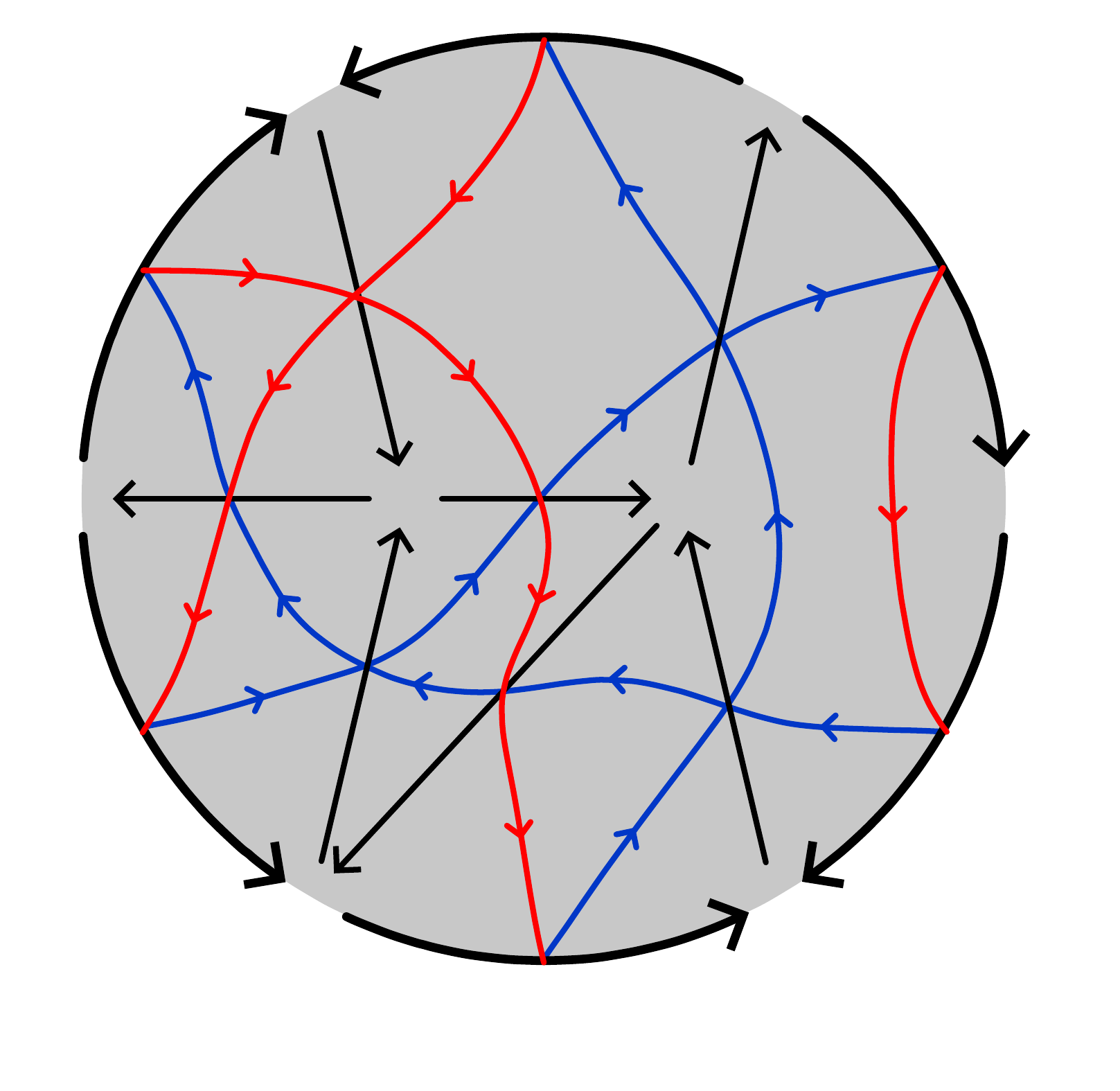
		\hspace{1cm}
		\def\svgscale{0.21}
		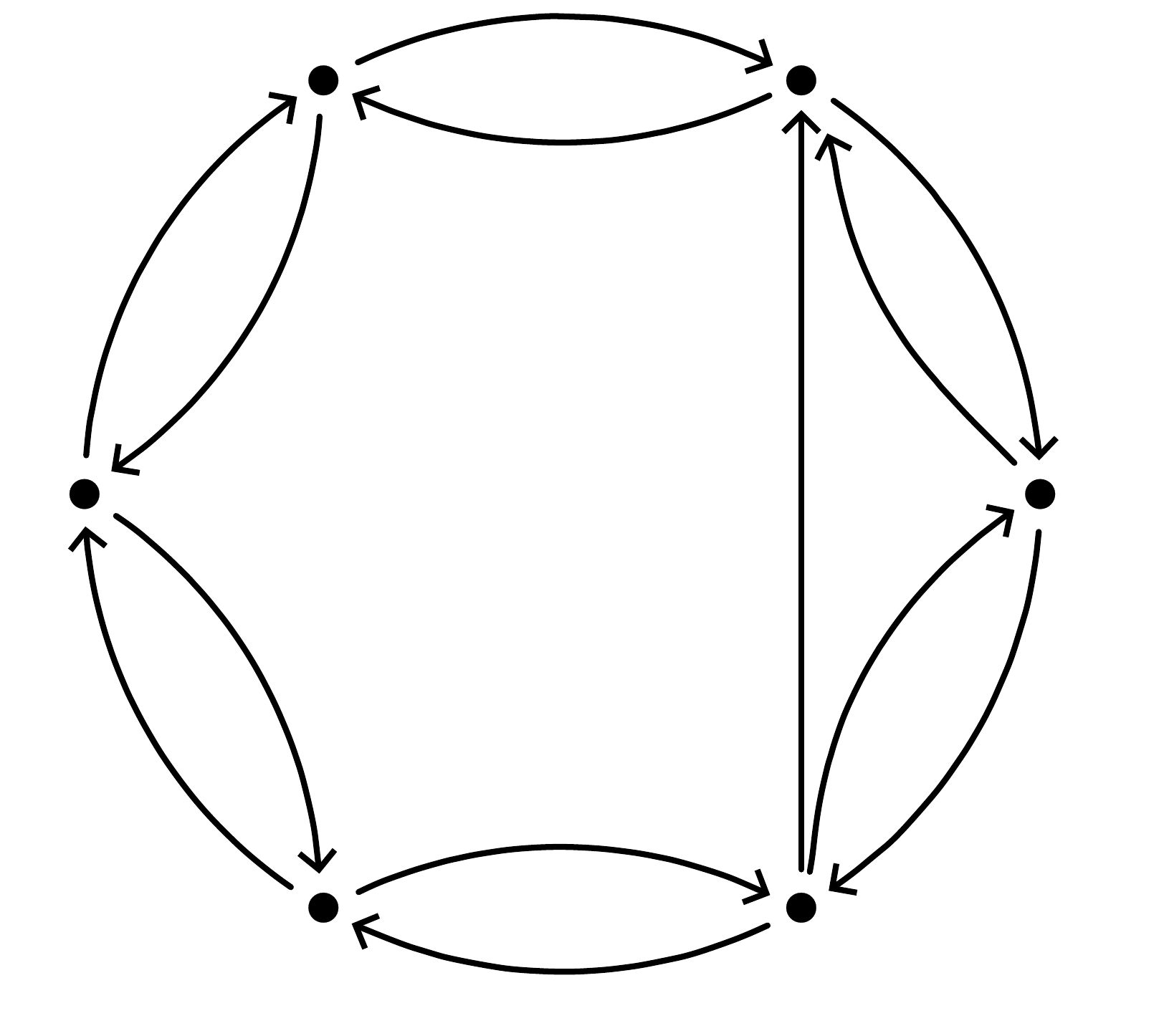
		\caption{On the left is a consistent dimer model $Q$ and on the right is the quiver $Q_\circ^\pi$.}
		\label{fig:intro}
	\end{figure}
	Consider the dimer model in Figure~\ref{fig:intro} with permutation $\pi:=256134$ in one-line notation and Grassmann necklace $\left(123,\ 234,\ 134,\ 145,\ 156,\ 126\right)$. We use, for example, 123 to denote the set $\{1,2,3\}$ for readability.
	First, we will use Theorem~\ref{thm:0.5} to understand the arrow-defining paths and their relation numbers.
	Setting $(v_1,v_2)=(3,1)$, we see that $I_{v_1}\cap I_{v_2}=134\cap 123=13$ is not contained in $I_w$ for any $w\in(v_2,v_1)=(1,3)=\{2\}$ (since $I_2=234\not\supseteq 13$), showing that Theorem~\ref{thm:0.5}~\eqref{FG1intro} is satisfied.
	We now treat~\eqref{FG2intro}. Note that $(v_1,v_2)=(3,1)=\{4,5,6\}$ and $I_{v_1}\cup I_{v_2}=123\cup 134=1234$. Since neither $I_4=145$, nor $I_5=156$, nor $I_6=126$ are contained in $1234$, we see that~\eqref{FG2intro} is satisfied.
	Hence, there is an arrow-defining path $p$ from 1 to 3 in the dimer algebra $A_Q$. Its relation numbers are ${\Y}(p)=|[1,3)\cap 123\cup 134|=|2|=1$ and ${\X}(p)=|[3,1)\backslash(134\cup 123)|=|56|=2$.
	On the other hand, set $(v_1,v_2)=(4,1)$. Then $I_3=134$ contains $I_1\cap I_4=123\cap 145=1$, hence~\eqref{FG1intro} fails and there is no arrow-defining path from 4 to 1 in $Q$.
	One may thus check that the only nonadjacent arrow-defining path is from 3 to 1.
	All of the above calculations may instead be phrased in terms of Theorem~\ref{thm:0}. 
	On the right of Figure~\ref{fig:intro} is shown the quiver $Q_\circ^\pi$, where the nonadjacent arrow $\alpha$ has relation number ${\Y}(\alpha)=1$. Theorem~\ref{thm:1} shows that $B_Q=Q_\circ^\pi/I_\circ^\pi$, where $I_\circ^\pi$ is the cancellative closure of
	\[\{[xy]-[yx],\ [x^3]-[y^3],\ [\alpha(xy)]-[y_2y_1]\}.\]
\end{exIntro}

The structure of the paper is as follows.
In Section~\ref{sec:dmb}, we give background on dimer models. We define consistency and connect it to strand diagrams and cancellativity.
Next, in Section~\ref{sec:TDM}, we show that the Gabriel quiver of the boundary algebra $B_Q$ may be understood in terms of rightmost and leftmost paths in the dimer model. In Section~\ref{sec:carn}, we show how rightmost and leftmost minimal paths and their relation numbers may be obtained from the strand diagram. We use this result to find these paths and relation numbers from the permutation or Grassmann necklace. 
Finally, in Section~\ref{sec:dba}, we show how to use the calculations of Section~\ref{sec:carn} to describe the boundary algebra of a consistent dimer model with a given permutation.

\subsection*{Acknowledgments}

The authors thank Jonathan Boretsky for useful discussions.
The authors were supported by the NSF grant DMS-2054255.

\section{Dimer Model Background}\label{sec:first-section}
\label{sec:dmb}

In this section, we give necessary background. First, we focus on dimer models, in particular on \emph{thin} (or \emph{consistent}) dimer models in disks. Thin dimer models may be defined using algebraic conditions or conditions on the associated strand diagram, and are connected with positroids through strand permutations. We also define dimer submodels, which will be an important tool in later proofs.

\subsection{Dimer Model Definition}

We define dimer models, following the exposition of~\cite{BS}.
A \textit{quiver} is a directed graph. A \textit{cycle} of $Q$ is a nonconstant oriented path of $Q$ which starts and ends at the same vertex.
If $Q$ is a quiver, we write $Q_{cyc}$ for the set of cycles in $Q$ up to cyclic equivalence. An element of $Q_{cyc}$ is the set of arrows in some cycle.

\begin{defn}
    A \textit{quiver with faces} is a triple $Q=(Q_0,Q_1,Q_2)$, where $(Q_0,Q_1)$ are the vertices and arrows of a quiver and $Q_2\subseteq Q_{cyc}$ is a set of \textit{faces} of $Q$.
\end{defn}

Given a vertex $i\in Q_0$, we define the \textit{incidence graph} of $Q$ at $i$ to be the graph whose vertices are given by the arrows incident to $i$ and whose arrows $\alpha\to\beta$ correspond to paths
\[\xrightarrow{\alpha}i\xrightarrow{\beta}\]
which occur in faces of $Q$.

\begin{defn}\label{defn:dimer-model}
	A (finite, oriented) \textit{dimer model with boundary} is given by a finite quiver with faces $Q=(Q_0,Q_1,Q_2)$, where $Q_2$ is written as a disjoint union $Q_2=Q_2^{cc}\cup Q_2^{cl}$, satisfying the following properties:
    \begin{enumerate}
        \item The quiver $Q$ has no loops.
        \item Each arrow of $Q_1$ is in either one face or two faces of $Q$. An arrow which is in one face is called a \textit{boundary arrow} and an arrow which is in two faces is called an \textit{internal arrow}.
	\item Each internal arrow lies in a cycle bounding a face in $Q_2^{cc}$ and in a cycle bounding a face in $Q_2^{cl}$.
	{\item\label{ddm:4} The incidence graph of $Q$ at each vertex is connected.}
    \end{enumerate}
\end{defn}

See the left of Figure~\ref{fig:ex2} for an example of a dimer model.

Given a dimer model with boundary $Q$ we may associate each face $F$ of $Q$ with a polygon whose edges are labeled by the arrows in $F$ and glue the edges of these polygons together as indicated by the directions of the arrows to form a surface with boundary $S(Q)$ into which $Q$ may be embedded. The surface $S(Q)$ is oriented such that the cycles of faces in $Q_2^{cc}$ are oriented positive (or counter-clockwise) and the cycles of faces in $Q_2^{cl}$ are oriented negative (or clockwise). The boundary of $S(Q)$ runs along the boundary arrows of $Q$. If $S(Q)$ is a disk, then we say that $Q$ is a \textit{dimer model on a disk}.

We may also move in the other direction. Suppose that $Q$ is a finite quiver with no loops such that every vertex has finite degree. Suppose further that $Q$ has an embedding into an oriented surface $\Sigma$ with boundary such that the complement of $Q$ in $\Sigma$ is a disjoint union of discs, each of which is bounded by a cycle of $Q$. We may then view $Q$ as a dimer model with boundary by declaring $Q_2^{cc}$ (respectively $Q_2^{cl}$) to be the set of positively (respectively, negatively) oriented cycles of $Q$ which bound a connected component of the complement of $Q$ in $\Sigma$. All dimer models may be obtained in this way.

Let $Q$ be a dimer model and let $p$ be a path in $Q$. We write $t(p)$ and $h(p)$ for the start and end vertex of $p$, respectively. If a path $q$ can be factored in the form $q=q_1pq_2$, where $h(q_1)=t(p)$ and $t(q_2)=h(p)$, we say that $p$ is in $q$ or that $q$ contains $p$ as a subpath and we write $p\in q$. Corresponding to any vertex $v$ is a \textit{constant path} $e_v$ from $v$ to itself which has no arrows. A {cycle} is then a nonconstant path $l$ such that $t(l)=h(l)$. A path $p$ \emph{cycleless} if no subpath of $p$ (including $p$ itself) is a cycle. 

\begin{defn}\label{defn:dimer-algebra}
	Given a dimer model with boundary $Q$, the \textit{dimer algebra} $A_Q$ is defined as the quotient of the path algebra $\k Q$ by (the ideal generated by) the relations
	\[[R_\alpha^{cc}]-[R_\alpha^{cl}]\]
    for every internal arrow $\alpha\in Q_1$.
\end{defn}

We now make more definitions. We say that two paths $p$ and $q$ in $Q$ are \textit{path-equivalent} if their associated elements in the dimer algebra $A_Q$ are equal. If $p$ is a path in $Q$, we write $[p]$ for the path-equivalence class of $p$ under these relations.
If $p'$ is a path obtained from $p$ by replacing a single subpath $R_\alpha^{cl}$ with $R_\alpha^{cc}$ ($R_\alpha^{cc}$ with $R_\alpha^{cl}$, respectively), then we say that $p'$ is a \textit{basic right-morph} (respectively, \textit{basic left-morph}) of $p$.
Since the ideal $I$ is generated by the relations $\{[R_\alpha^{cc}]-[R_\alpha^{cl}]\ :\ \alpha\textup{ is an internal arrow of }Q\}$, two paths $p$ and $q$ are path-equivalent if and only if 
there is a sequence of paths $p=r_1,\dots,r_m=q$ such that $r_{i+1}$ is a basic (left- or right-) morph of $r_i$ for $i\in[m-1]$.
A path is \textit{leftmost} (respectively \textit{rightmost}) if it has no basic left-morphs (respectively basic right-morphs).

Suppose $p$ is a cycle in $Q$ which starts and ends at some vertex $v$ and travels around a face of $Q$ once. Then we say that $p$ is a \textit{face-path} of $Q$ starting at $v$. The terminology is justified by the following observation which follows from the defining relations.

\begin{remk}\label{faces-are-equivalent}
    Any two face-paths of $Q$ starting at $v$ are path-equivalent.
\end{remk}

\begin{defn}
    For all $v\in Q_0$, fix some face-path $f_v$ at $v$. Then define
    \begin{equation}
	    f:=\sum_{v\in Q_0}f_v.
    \end{equation}
\end{defn}
It follows from Remark~\ref{faces-are-equivalent} that the path-equivalence class $[f]$ is independent of the choice of $f_v$ for all $v\in Q_0$. Moreover, the dimer algebra relations imply that $[f]$ commutes with every arrow, hence $[f]$ is in the center of $A_Q$. 

The \textit{completed path algebra} $\langle\langle\k Q\rangle\rangle$ has as its underlying set the \textit{possibly infinite} linear combinations of finite paths in $Q$, with multiplication induced by composition. See~\cite[Definition 2.6]{ZPressland2020}.

\begin{defn}\label{defn:completed-dimer-algebra}
	The \textit{completed dimer algebra} $\widehat A_Q$ is the quotient of the completed path algebra $\k \langle\langle Q\rangle\rangle$ by the closure of the ideal generated by the relations $[R_\alpha^{cc}]-[R_\alpha^{cl}]$ for each internal arrow $\alpha$ with respect to the arrow ideal.
\end{defn}
Elements of $\widehat A_Q$ are possibly infinite linear combinations of (finite) paths of $Q$, with multiplication induced by composition.

\begin{defn}\label{defn:ba}
	Let the \emph{boundary idempotent} $e$ be the sum of all boundary vertices of $Q$. A \emph{boundary path} of $Q$ is a path beginning and ending at a boundary vertex. Define the \emph{boundary algebra} $B_Q:=eA_Qe$. Elements of the boundary algebra are linear combinations of boundary paths of $Q$ under the relations of the dimer algebra.
	The \textit{completed boundary algebra} is $\widehat B_Q\cong e\widehat A_Qe$.
\end{defn}

\subsection{Thin Dimer Models}\label{sec:hc}

For the rest of this paper, we will assume that any dimer model is on a disk, unless otherwise specified.
We define \textit{thin} dimer models on disks and give two additional equivalent consistency conditions.
We prove some short lemmas about thin models.

\begin{defn}
	A path $p$ in a dimer model $Q$ is \textit{minimal} if we may not write $[p]=[qf^m]$ for any $m\geq1$.
\end{defn}

\begin{defn}\label{defn:algebraic-consistency}
	A dimer model $Q=(Q_0,Q_1,Q_2)$ in a disk is \textit{thin} if it satisfies the following \textit{thinness condition}:
	 For any vertices $v_1$ and $v_2$ of $Q$, there is a minimal path $r$ from $v_1$ to $v_2$ with the property that any other path $p$ from $v_1$ to $v_2$ satisfies
	 \[[p]=[rf^{C(p)}]\]
	 for some unique integer $C(p)\geq0$, which we call the \textit{c-value} of $p$.
\end{defn}

In particular, there is a unique equivalence class of minimal paths between any two vertices of a thin dimer model.

\begin{remk}
	In~\cite{BS}, consistent dimer models on arbitrary surfaces are defined by requiring that there is a unique minimal path-equivalence class in every homotopy class, and that an arbitrary path is equivalent to a composition of a minimal path with some number of face-paths. A thin dimer model is then a consistent dimer model in a disk.
\end{remk}

\begin{lemma}\label{lem:c-values-compose}
	If $p$ and $q$ are paths in a thin dimer model $Q$ with $h(p)=t(q)$, then $C(pq)\geq C(p)+C(q)$.
\end{lemma}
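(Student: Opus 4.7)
The plan is to chain together three applications of the thinness condition, exploiting the fact that $[f]$ lies in the center of $A_Q$. Let $p$ be a path from $v_1$ to $v_2$ and $q$ be a path from $v_2$ to $v_3$, and fix representative minimal paths $r_p$, $r_q$, $r_{pq}$ for the three relevant pairs of endpoints.

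First I would apply thinness to $p$ and $q$ separately to write $[p] = [r_p f^{C(p)}]$ and $[q] = [r_q f^{C(q)}]$, so that
\[
[pq] \;=\; [r_p f^{C(p)} r_q f^{C(q)}] \;=\; [r_p r_q]\,[f]^{C(p)+C(q)},
\]
where the second equality uses that $[f]$ is central in $A_Q$. Next, since $r_p r_q$ is itself a path from $v_1$ to $v_3$, thinness gives a nonnegative integer $C(r_p r_q)$ with $[r_p r_q] = [r_{pq} f^{C(r_p r_q)}]$. Substituting yields
\[
[pq] \;=\; [r_{pq} f^{C(r_p r_q) + C(p) + C(q)}].
\]

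Finally, thinness applied to the path $pq$ itself says that $C(pq)$ is the \emph{unique} nonnegative integer such that $[pq] = [r_{pq} f^{C(pq)}]$. Comparing with the previous display forces
\[
C(pq) \;=\; C(r_p r_q) + C(p) + C(q) \;\geq\; C(p) + C(q),
\]
using $C(r_p r_q) \geq 0$. There is no real obstacle here; the only subtlety is that the argument relies on both the uniqueness clause of the thinness condition and on centrality of $[f]$, both of which were established in the preceding discussion.
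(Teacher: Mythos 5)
Your proof is correct and follows essentially the same route as the paper: write $[p]=[r_pf^{C(p)}]$ and $[q]=[r_qf^{C(q)}]$, use centrality of $[f]$ to compute $[pq]=[r_pr_qf^{C(p)+C(q)}]$, and then conclude. The only difference is that you make the final step explicit by invoking thinness a third time on $r_pr_q$ and appealing to the uniqueness clause, which actually yields the sharper equality $C(pq)=C(r_pr_q)+C(p)+C(q)$; the paper simply observes that $[f^{C(p)+C(q)}]$ may be factored out of $[pq]$ and concludes the inequality directly.
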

\begin{proof}
    If $p$ and $q$ are paths in a thin dimer model $Q$ with $h(p)=t(q)$, then we may write $[p]=[f^{C(p)}r_p]$ and $[q]=[f^{C(q)}r_q]$ for some minimal paths $r_p$ and $r_q$. Then, using the fact that $[f]$ is central, we calculate
    \[[pq]=[f^{C(p)}r_pf^{C(q)}r_q]=[f^{C(p)+C(q)}r_pr_q].\]
	We have shown that $[f^{C(p)+C(q)}]$ may be factored out of $[pq]$, hence $C(pq)\geq C(p)+C(q)$.
\end{proof}

Recall that two paths are equivalent if and only if there is a sequence of basic morphs taking one to the other. Since a basic morph cannot remove some arrows without replacing them with other arrows, the constant path is the unique minimal path from a vertex to itself. This leads to the following remark.

\begin{remk}\label{rem:cycle-cant-be-constant}
	If $Q$ is thin and $p$ is a nonconstant cycle, then the c-value $C(p)>0$.
\end{remk}

\begin{lemma}\label{lem:subpath-of-facepath-is-thin}
    Let $Q$ be a thin dimer model. Any proper subpath of a face-path of $Q$ is minimal.
\end{lemma}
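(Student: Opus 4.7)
The plan is to argue by contradiction using the c-value machinery, reducing the question to whether a certain nonconstant cycle can have c-value zero, which Remark~\ref{rem:cycle-cant-be-constant} forbids. Suppose $p$ is a proper subpath of the face-path $f_v$ but $p$ is not minimal, so that $[p]=[qf^m]$ for some path $q$ from $t(p)$ to $h(p)$ and some $m\geq 1$.

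Let $w:=t(p)$. By Remark~\ref{faces-are-equivalent}, I may rotate the face-path to start at $w$; explicitly, take a face-path $f_w = p\,p'$ with $p$ as the initial segment and $p'$ a path from $h(p)$ back to $w$ completing the face. Because $p$ is a \emph{proper} subpath of $f_v$, the path $p'$ contains at least one arrow, so in particular $p'$ is nonconstant; hence $qp'$ is a nonconstant cycle at $w$.

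Next, using that $[f]$ is central in $A_Q$, I commute the $[f^m]$ factor past $[p']$ to compute
\[
[f_w]\;=\;[p\,p']\;=\;[q f^m p']\;=\;[q p' f^m].
\]
On the left-hand side, the minimal path from $w$ to $w$ is the constant path $e_w$ (as remarked just before Remark~\ref{rem:cycle-cant-be-constant}), and $[f]e_w=[f_w]$ by definition of $[f]$, so $C(f_w)=1$. On the right-hand side, $qp'$ is a nonconstant cycle, so Remark~\ref{rem:cycle-cant-be-constant} gives $C(qp')\geq 1$, and then Lemma~\ref{lem:c-values-compose} yields $C(qp'f^m)\geq C(qp')+m\geq 2$. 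The uniqueness clause of the thinness condition in Definition~\ref{defn:algebraic-consistency} now delivers the contradiction $1\geq 2$.

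The only subtlety I expect is justifying the rotation step cleanly — namely, that for any vertex $w$ lying on a face, one can choose a face-path at $w$ whose initial segment is exactly the given $p$. This is immediate from the cyclic nature of how face-paths traverse a face (and is implicit in Remark~\ref{faces-are-equivalent}), but it is the only place where one has to be careful about how the face is parametrized; everything else is a direct application of centrality of $[f]$ together with the two earlier results about c-values.
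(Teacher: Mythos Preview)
Your proof is correct and follows essentially the same approach as the paper's own proof: write the face-path as $pp'$, assume $[p]=[qf^m]$ with $m\geq 1$, observe that $qp'$ is a nonconstant cycle so $C(qp')\geq 1$, and derive the contradiction $1=C(f_w)\geq C(qp')+m\geq 2$. The only cosmetic differences are that the paper takes $q$ to be the minimal representative $r$ (so $m=C(p)$) and substitutes $[rp']=[f_v^{C(rp')}]$ directly rather than invoking Lemma~\ref{lem:c-values-compose}; your worry about the rotation step is handled in the paper simply by declaring $v:=t(p)$ from the outset.
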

\begin{proof}
	Suppose $p$ is a proper subpath of a face-path $f_v$ starting at $v:=t(p)$. Let $p'$ be the subpath of $f_v$ such that $pp'=f_v$. If $p$ is not minimal, then by definition of thinness, $[p]=[rf^{C(p)}]$ for some minimal path $r$ from $v$ to $h(p)$ and some integer $C(p)>0$.
	Then $[pp']=[f^{C(p)}rp']=[f_v^{C(p)}rp']$. The path $rp'$ is a nonconstant cycle, hence has a positive c-value $C(rp')>0$ by Remark~\ref{rem:cycle-cant-be-constant}. By definition of thinness $[rp']=[f_v^{C(rp')}]$. It follows that 
	\[[f_v]=[pp']=[f_v^{C(p)}rp']=[f_v^{C(p)}f_v^{C(rp')}]=[f_v^{C(p)+C(rp')}],\] which is a contradiction since $C(p)+C(rp')\geq1+1=2$ but all face-paths trivially have a c-value of 1. It follows that $p$ is minimal.
\end{proof}

\subsection{Cancellativity}

We now see that a dimer model in a disk is thin if and only its dimer algebra is cancellative.
We define the cancellative closure of an ideal, which is used in the statement of our main theorem.

\begin{defn}
	Let $A=\k Q/I$ be a path algebra with relations. We say that $A$ is a \emph{cancellation algebra}, or that $A$ (or $I$) is \emph{cancellative}, if the following property is satisfied:
	For any choice of $m\in\mathbb Z_{>0}$ and paths $a,b,\{p_i\ :\ i\in[m]\}$ and weights $\{c_i\in\k\ :\ i\in[m]\}$ satisfying $h(a)=t(p_i)$ and $h(p_i)=t(b)$ for all $i\in[m]$, we have $[a\left(\sum_{i=1}^mc_ip_i\right)b]\in I\iff[\sum_{i=1}^mc_ip_i]\in I$.
	We call this the \emph{cancellation property}.
\end{defn}

When $A=\k Q/I$ is a dimer algebra, the ideal $I$ is generated by \emph{commutation relations} of the form $[p]-[q]$, where $p$ and $q$ are paths of $Q$ with $t(p)=t(q)$ and $h(p)=h(q)$. In this case, the condition of being cancellative amounts to the condition that, 
for paths $p,q,a,b$ of $Q$ with $h(a)=t(p)=t(q)$ and $t(b)=h(p)=h(q)$, we have $[apb]-[aqb]\in I\iff [p]-[q]\in I$.
It is not hard to see that the thinness condition implies cancellativity; in fact, the reverse implication also holds.

\begin{thm}[{\cite[Theorem A]{BS}}]\label{thm:cancellative-closure}
	A dimer model $Q$ is thin if and only if $A_Q$ is cancellative. 
\end{thm}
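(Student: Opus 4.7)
The plan is to prove both directions, with the forward implication following directly from the canonical form provided by thinness, and the reverse being the substantive direction that uses the disk hypothesis essentially. For thinness implies cancellativity, I would observe that each hom-space $e_{v_1} A_Q e_{v_2}$ carries the $\k$-basis $\{[r f^k] : k \geq 0\}$ where $r$ is the unique minimal class from $v_1$ to $v_2$. Given $a$ from $u_1$ to $v_1$, $b$ from $v_2$ to $u_2$, and a linear combination $\sum_i c_i p_i$ of paths from $v_1$ to $v_2$, I would group by c-value and use centrality of $[f]$ to write
\[\textstyle [a(\sum_i c_i p_i)b] = \sum_{k \geq 0} d_k [r' f^{k + C(arb)}],\]
where $d_k = \sum_{i : C(p_i) = k} c_i$ and $r'$ is the minimal class from $u_1$ to $u_2$. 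By thinness the right side vanishes iff all $d_k = 0$, which is precisely the condition for $[\sum_i c_i p_i] = 0$, establishing cancellativity.

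For the reverse direction, assume $A_Q$ is cancellative; I would decompose thinness into three claims. Existence of a minimal representative in each class is a routine well-founded argument on path length, using that each face-path has positive length: any shortest representative of a class $[p]$ is minimal, else one could strip an $[f]$-factor from an equivalent expression $[qf^m]$ and further shorten. Uniqueness of the c-value for a fixed minimal $r$ follows directly from cancellation: if $[r f^{k_1}] = [r f^{k_2}]$ with $k_1 < k_2$, cancelling $r$ on the left and then $f_{v_2}^{k_1}$ on the left yields $[e_{v_2}] = [f_{v_2}^{k_2 - k_1}]$, which is impossible since constant and nonconstant path classes are distinct basis elements of $A_Q$.

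The main obstacle is the third claim: uniqueness of the minimal class itself, i.e., any two minimal paths $r_1, r_2$ from $v_1$ to $v_2$ must satisfy $[r_1] = [r_2]$. Since $Q$ is embedded in a disk, $r_1$ and $r_2$ are homotopic rel endpoints, and the face relations generate the local homotopy moves, so in principle one can connect $r_1$ to $r_2$ by a sequence of basic morphs; the difficulty is that such a sequence may temporarily introduce and later absorb $[f]$-factors, so that only a relation of the form $[r_1 f^j] = [r_2 f^j]$ may hold directly. The strategy is then to apply cancellativity to strip the $f^j$ and conclude $[r_1] = [r_2]$. This is precisely where the disk hypothesis is essential: on a higher-genus surface, $r_1$ and $r_2$ may represent distinct free homotopy classes and no such sequence of local moves exists. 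This step is the core of the argument in \cite[Theorem A]{BS}.
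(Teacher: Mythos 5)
The paper cites this result from \cite[Theorem~A]{BS} without reproducing a proof, so there is no internal argument to compare against; I will instead assess your sketch on its own terms.

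Your forward direction (thin $\Rightarrow$ cancellative) is correct. The observation that thinness forces $e_{v_1} A_Q e_{v_2}$ to have $\k$-basis $\{[rf^k]\}_{k\geq 0}$ is exactly right, and the grouping-by-$c$-value computation is sound: since $[f]$ is central and $[arb]=[r'f^{C(arb)}]$, one gets $[ap_ib]=[r'f^{C(arb)+C(p_i)}]$, so the $a(\cdot)b$ map is injective on coefficients. The only point worth making explicit is that distinct path-equivalence classes are automatically $\k$-linearly independent in $A_Q$ because the defining relations are binomial.

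Your reverse direction contains two real gaps. First, your existence argument for a minimal representative is flawed: you induct on path length and claim that stripping an $f$-factor from $[qf^m]$ shortens the representative. But dimer relations $[R_\alpha^{cc}]=[R_\alpha^{cl}]$ need not preserve length (the two faces meeting along $\alpha$ can have different sizes), so $q$ is not necessarily shorter than $p$, and the well-founded recursion does not terminate as stated. The usual fix is to replace length by a homogeneous grading in which all face-paths have the same positive degree (a ``perfect matching'' grading), but constructing such a grading without already assuming consistency requires work. Second, and more importantly, you explicitly concede the uniqueness of the minimal class, calling it ``the core of the argument in [BS].'' That is an accurate self-assessment, but it means your proposal is a framework around the hard step rather than a proof of it. The claim that face relations generate all homotopies rel endpoints in a disk, together with a well-defined ``$f$-count'' difference, is exactly what needs to be established, and it is where cancellativity must interact with the planarity of the embedding; as sketched, there is no argument showing why the intermediate sequence of basic morphs accumulates an \emph{equal} number of $f$-factors on both minimal endpoints, beyond noting that cancellativity plus minimality would then finish. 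So the decomposition is reasonable, the first and second subclaims are incorrect or incomplete, and the third is deferred to the cited source.
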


\begin{defn}\label{def:cancellative-closure}
    Let $J$ be a subset of a path algebra $\k Q$. The \textit{cancellative closure} $J^\bullet$ of $J$ is defined to be the intersection of all cancellative ideals containing $J$:
	\[J^\bullet=\bigcap_{\substack{I\supseteq J \textup{ a cancellative} \\ \textup{ideal of }\k Q}}I.\]
\end{defn}

\begin{lemma}
	If $J\subseteq\k Q$, then $J^\bullet$ is a cancellative ideal.
\end{lemma}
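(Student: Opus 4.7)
The plan is to verify the two defining properties of a cancellative ideal directly from the definition of $J^\bullet$ as an intersection. First I would note that the collection over which we intersect is nonempty: the whole algebra $\k Q$ is a cancellative ideal containing $J$ (in that case $\k Q/\k Q = 0$ and the cancellation property holds vacuously), so $J^\bullet$ is well-defined.

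Next, that $J^\bullet$ is an ideal follows immediately from the fact that an arbitrary intersection of two-sided ideals in $\k Q$ is a two-sided ideal; this is routine and I would dispatch it in one line.

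The main content is verifying the cancellation property. Fix paths $a,b$ and a linear combination $z=\sum_{i=1}^m c_i p_i$ of paths composable with $a$ and $b$. For the forward direction, suppose $[azb]\in J^\bullet$. Let $I$ be any cancellative ideal containing $J$; then by definition $J^\bullet\subseteq I$, so $[azb]\in I$, and cancellativity of $I$ gives $[z]\in I$. Since this holds for every such $I$, we conclude $[z]\in J^\bullet$. The reverse direction does not even use cancellativity: if $[z]\in J^\bullet\subseteq I$ for every cancellative $I\supseteq J$, then since each such $I$ is a two-sided ideal, $[azb]\in I$, and hence $[azb]\in J^\bullet$.

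There is no serious obstacle here; the argument is essentially the standard ``intersections preserve properties characterized by implications of the form membership-implies-membership'' pattern. The only thing to double-check is that the cancellation property is indeed of this closure-friendly form, which it is, since it is a universally quantified implication of memberships. No new machinery from the paper is required beyond Definition~\ref{def:cancellative-closure}.
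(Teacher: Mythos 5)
Your argument is correct and follows essentially the same route as the paper: observe that an intersection of ideals is an ideal, then verify the cancellation biconditional by passing the membership statements through each cancellative ideal $I\supseteq J$ in the intersection. The two extra observations you make (nonemptiness of the intersecting family, and that the ``easy'' direction only uses that $I$ is a two-sided ideal) are nice but are implicitly contained in the paper's chain of equivalences.
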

Hence, the cancellative closure $J^\bullet$ is the smallest cancellative ideal containing $J$.
\begin{proof}
	Since $J^\bullet$ is an intersection of ideals, it is itself an ideal.
	We now show that it is cancellative. 
	Take $m\in\mathbb Z_{>0}$ and paths $a,b,\{p_i\ :\ i\in[m]\}$ and weights $\{c_i\in\k\ :\ i\in[m]\}$ satisfying $h(a)=t(p_i)$ and $h(p_i)=t(b)$ for all $i\in[m]$.
	Then
	\begin{align*}
		[a(\sum_{i=1}^mc_ip_i)b]\in J^\bullet&\iff  [a(\sum_{i=1}^mc_ip_i)b]\in I\text{ for any cancellative ideal }I\supseteq J \\
		&\iff [\sum_{i=1}^mc_ip_i]\in I\text{ for any cancellative ideal }I\supseteq J\\
		&\iff [\sum_{i=1}^mc_ip_i]\in J^\bullet.
	\end{align*}
	This shows that $J^\bullet$ is cancellative.
\end{proof}

It is immediate that if $J$ is a cancellative ideal, then $J^\bullet=J$.

\subsection{Strand Diagrams and Bad Configurations}

We define strand diagrams and connect them to dimer models. We show that a dimer model is thin if and only if its strand diagram avoids certain bad configurations. The below definition is a reformulation of~\cite[Definition 1.10]{ZBocklandt2015}.

\begin{defn}\label{defn:postnikov-diagram}
	Let $\Sigma$ be a disk with a discrete set of $n$ marked points on its boundary. A \textit{strand diagram} $D$ in $\Sigma$ consists of a finite collection of oriented strands subject to the following conditions.
    \begin{enumerate}
        {\item Each boundary marked point is the start point of exactly one strand, and the end point of exactly one strand.\label{pd:1}}
        {\item Any two strands intersect in finitely many points, and each intersection involves only two strands. Each intersection not at a marked boundary point is transversal.\label{pd:2}}
	{\item\label{pd:3} Moving along a strand, the signs of its crossings with other strands alternate. This includes intersections at a marked boundary point. We call this the \emph{alternating intersection property}. See the figure below, where the bold segment is boundary.
		\begin{figure}[H]			\centering
			\def\svgscale{.21}
			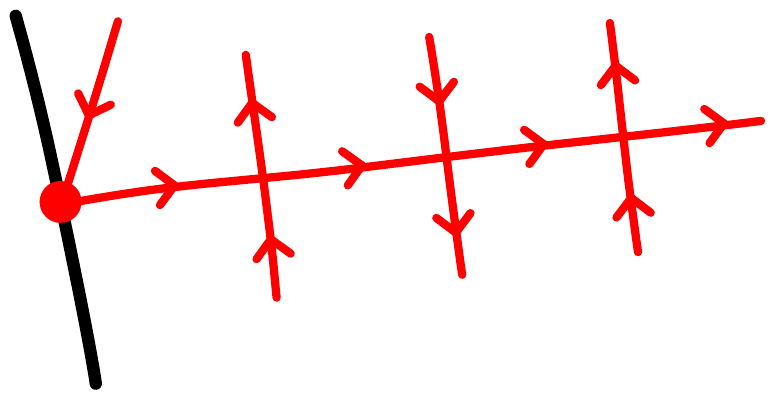
		\end{figure}}
	{\item\label{dpd:4} 
		Any connected component $C$ of the complement of $D$ in the interior of $\Sigma$ is an open disk. The boundary of $C$ may contain a number of one-dimensional ``boundary segments'' of the boundary of $\Sigma$, in which case $C$ is a \textit{boundary region}. Otherwise, $C$ is an \textit{internal region}.
It follows from~\eqref{pd:3} that any internal region $C$ is either an \textit{oriented region} (i.e., all strands on the boundary of the component are oriented in the same direction) or an \textit{alternating region} (i.e., the strands on the boundary of the component alternate directions). See Figure~\ref{fig:oriented-alternating}.
		    {It follows from the above conditions that adjacent strands on the boundary of a boundary region must alternate in direction. We consider all boundary regions to be alternating, even if they have only one strand on the boundary.}

		\begin{figure}			\centering
			\def\svgscale{0.21}
			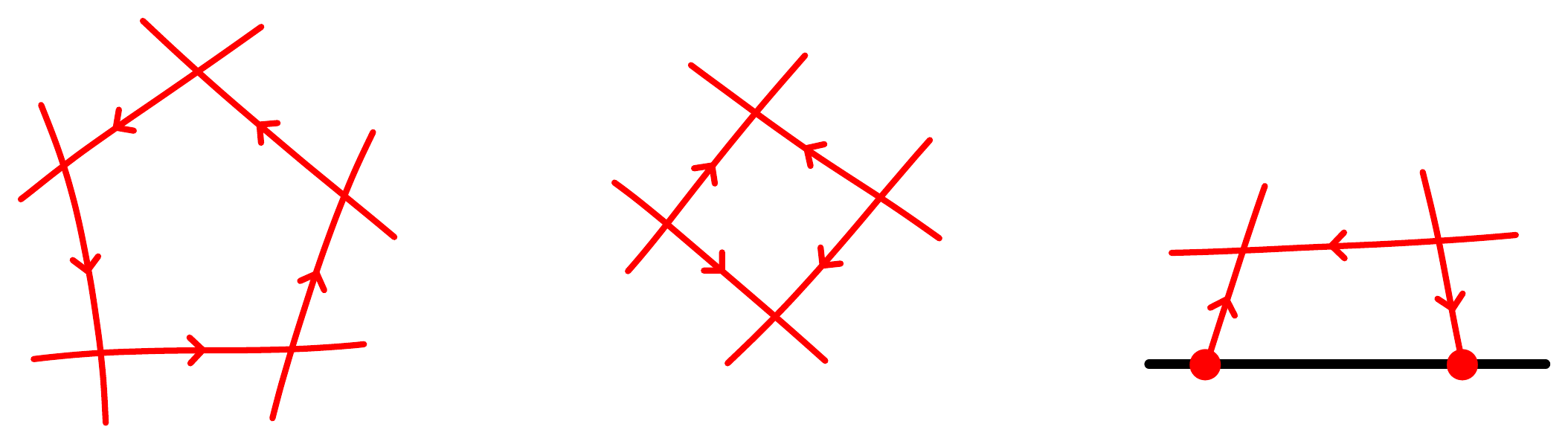
			\caption{One oriented (left) and two alternating (right) regions. The bold segment is boundary.}
			\label{fig:oriented-alternating}
		\end{figure}}
    \end{enumerate}
    The diagram $D$ is called a \textit{Postnikov diagram} if in addition it satisfies the following conditions.
    \begin{enumerate}
        {\item No subpath of a strand is a closed cycle.\label{pd:6nointeriorcycles}}
	{\item\label{spd:2} 
		No two strand segments intersect twice and are oriented in the same direction between these intersection points.\label{pd:5badlens}}
    \end{enumerate}
	In other words, \textit{bad configurations} shown in Figure~\ref{fig:bad-configurations} and described below must not appear in order for $D$ to be a Postnikov diagram:
	\begin{figure}		\centering
		\def\svgscale{0.21}
		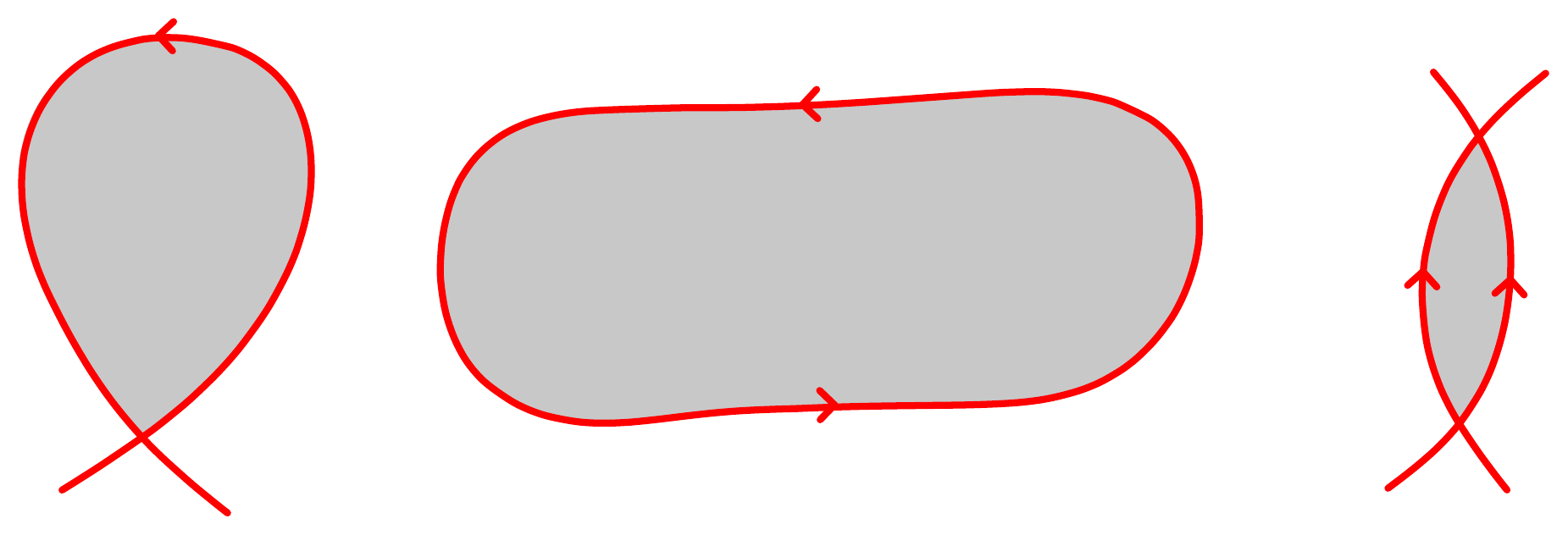
		\caption{The three bad configurations.}
		\label{fig:bad-configurations}
	\end{figure}
	\begin{enumerate}
		\item\label{gg1} A strand which intersects itself through a cycle as forbidden in~\eqref{pd:6nointeriorcycles}, called a \textit{self-intersecting strand}. 
		\item\label{gg2} A strand in the interior as forbidden in~\eqref{pd:6nointeriorcycles}, called a \textit{closed cycle}. 
		\item\label{gg3} Two strand segments which intersect in the same direction as forbidden in~\eqref{pd:5badlens}, called a \textit{bad lens}.
	\end{enumerate}
	The diagram $D$ is \emph{connected} if the set of strands is connected. Equivalently, $D$ is connected if every boundary region has exactly one boundary segment of the disk $\Sigma$.
\end{defn}

See the left of Figure~\ref{fig:ex2} for an example of a strand diagram (overlayed onto a dimer model).
A \emph{$(k,n)$-Postnikov diagram} is a Postnikov diagram such that the strand starting at vertex $j$ ends at vertex $j-k$ for all $j\in[n]$.

\begin{defn}~\label{defn:dimer-model-from-strand-diagram}
	Let $D$ be a connected strand diagram. We associate to $D$ a dimer model $Q_D$ on a disk as follows. The vertices of $Q_D$ are the alternating regions of $D$. When the closures of two different alternating regions $v_1$ and $v_2$ meet in a crossing point between strands of $D$, or at one of the marked boundary points, we draw an arrow between $v_1$ and $v_2$, oriented in a way consistent with these strands, as shown in Figure~\ref{fig:strand-arrows}.
	The counter-clockwise (respectively clockwise) faces of $Q_D$ are the arrows around a counter-clockwise (respectively clockwise) region of $D$.
	\begin{figure}		\centering
		\def\svgscale{.21}
		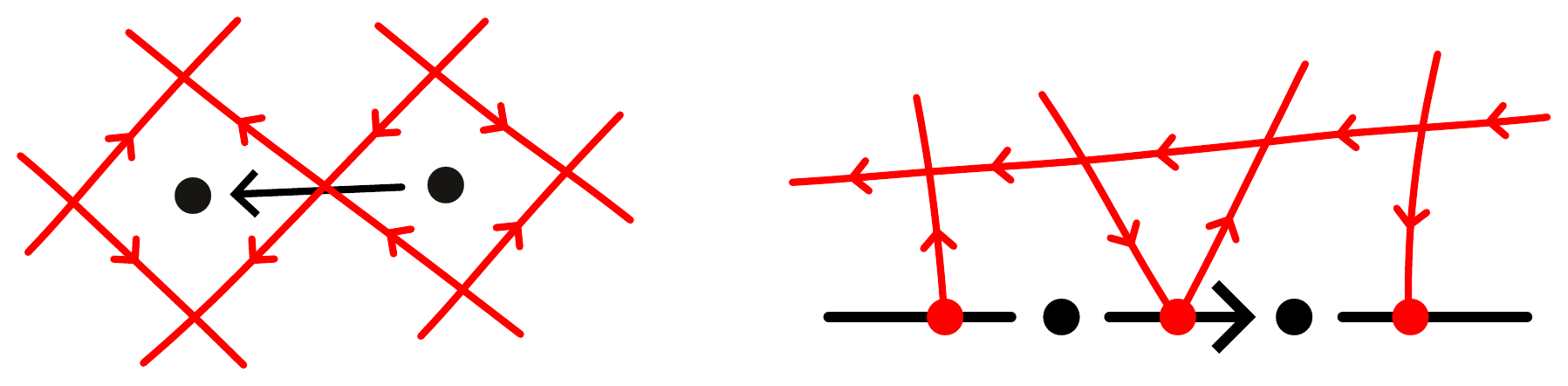
		\caption{The arrows between two alternating faces. The bold arrow is a boundary arrow.}
		\label{fig:strand-arrows}
	\end{figure}
	The result is a thin dimer model in a disk~\cite[Proposition 2.11]{ZPressland2019}.
\end{defn}
We may also go in the other direction.
\begin{defn}\label{defn:strand-diagram-from-dimer-model}
	Let $Q$ be a dimer model. We associate a connected strand diagram $D_Q$ to $Q$ as follows.
	For any arrow $\alpha$ of $Q$, let $v_\alpha$ be the point in the center of $\alpha$ in the embedding of $Q$ into $S(Q)$.
	For any two arrows $\alpha$ and $\beta$ of $Q$ such that $\beta\alpha$ is a subpath of a face-path, we draw a path from $v_\alpha$ to $v_\beta$ along the interior of the face containing $\beta\alpha$.
	Label the endpoints of strands along the boundary of $D$ from 1 to $n$ such that the $j$th point lies along the blossom arrow immediately clockwise of the $j$th boundary vertex of $D$.
	See the left of Figure~\ref{fig:ex2} for a dimer model with its strand diagram overlayed.
\end{defn}

The above constructions are mutual inverses, and hence establish a correspondence between connected strand diagrams and dimer models. 
The following was shown for dimer models on the disk corresponding to $(k,n)$-diagrams in~\cite{ZBKM} and for general dimer models on the disk in~\cite{BS}.
\begin{thm}[{\cite{BS}}]\label{thm:thin-bad-conf}
	A dimer model in a disk is thin if and only if its strand diagram has no bad configurations (i.e., if it is a $(k,n)$-Postnikov diagram).
\end{thm}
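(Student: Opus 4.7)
The plan is to prove the biconditional in two directions, with the main work in the reverse direction. I will establish that thinness fails if any of the three bad configurations is present (forward direction), and that absence of bad configurations is enough to guarantee the thinness condition of Definition~\ref{defn:algebraic-consistency} (reverse direction).

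For the forward direction, I would argue by contrapositive and treat each bad configuration separately. For a bad lens formed by two strand segments of $s_1,s_2$ meeting at two crossing points $v_1,v_2$ (which are vertices of $Q_D$) oriented in the same direction, the two arcs of the lens give rise, via Definition~\ref{defn:dimer-model-from-strand-diagram}, to two distinct paths $p_1,p_2$ from $v_1$ to $v_2$ in $Q$, each of which traces through a sequence of alternating regions along a single strand. Using the correspondence between basic morphs and the alternating regions touched by strands, I would argue that both $p_1$ and $p_2$ are minimal (no face-path factors off them because neither traverses a complete clockwise or counter-clockwise cycle of faces), while $[p_1]\neq [p_2]$ because they are not connected by any sequence of basic morphs. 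This contradicts thinness, which demands a unique minimal equivalence class between any two vertices. For a self-intersecting strand or a closed cycle in the interior, the strand bounds an oriented region on one side; translating this to $Q_D$ shows that one obtains a vertex whose incidence graph or local face structure admits a nontrivial cycle whose c-value cannot be controlled, and from this one can manufacture two different minimal paths between suitably chosen vertices on the boundary of that region.

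For the reverse direction, suppose $D$ has no bad configurations; I would show directly that the thinness condition holds. The strategy is to introduce a global complexity measure on paths, essentially counting how many times a path ``winds around'' faces, and to show that any two paths $p,q$ with the same endpoints satisfy $[pf^N]=[qf^M]$ for some integers $N,M$, which together with uniqueness of a minimal representative forces the desired form $[p]=[rf^{C(p)}]$. Concretely, I would induct on the total number of basic morphs needed to reduce a given path to a leftmost (or rightmost) form. The key local lemma, established via a diamond-style argument, is that whenever two basic morphs may be applied to the same path, their outcomes can be made to agree after further morphs; the absence of bad lenses is exactly what rules out the obstruction to this confluence, because a bad lens would produce two rightmost paths in the same homotopy class that differ by more than a power of $[f]$. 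The absence of self-intersecting strands and closed cycles ensures that the rightmost reduction terminates and that no oriented internal region exists to spoil the uniqueness of minimal representatives.

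The main obstacle is the diamond/confluence step in the reverse direction: showing that any two reduction sequences between equivalent paths can be completed to a common path. I would expect to spend most of the effort translating overlaps of basic morphs into configurations of strand segments and then invoking the no-bad-configuration hypothesis to rule out the ``bad'' overlap patterns. Once confluence is in place, uniqueness of the minimal class between $v_1$ and $v_2$ follows, and the factoring $[p]=[rf^{C(p)}]$ comes from pushing all extra face-path rotations accumulated during reduction into the central element $[f]$. This recovers the thinness condition and completes the proof.
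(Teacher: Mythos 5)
The paper does not prove Theorem~\ref{thm:thin-bad-conf}; it states it as a citation of~\cite{BS} (and for the $(k,n)$ case~\cite{ZBKM}), so there is no in-paper proof to compare against. Evaluating your sketch on its own, both directions contain genuine gaps that would need to be closed before this could count as a proof.

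In the forward (contrapositive) direction, the bad-lens argument rests on an incorrect identification: you take the two crossing points of the lens to be ``vertices of $Q_D$,'' but by Definition~\ref{defn:dimer-model-from-strand-diagram} the vertices of $Q_D$ are the \emph{alternating regions}, while crossing points correspond to \emph{arrows}. The two arcs of the lens therefore do not directly ``give rise to two paths $p_1,p_2$ from $v_1$ to $v_2$.'' To make this work you would have to choose suitable alternating regions near the two ends of the lens and show that the corresponding $Q$-paths are inequivalent yet both minimal; the appeal to ``the correspondence between basic morphs and the alternating regions touched by strands'' is doing all the work here and is not established. The self-intersecting strand and closed cycle cases are dispatched in a single sentence that asserts the existence of ``two different minimal paths between suitably chosen vertices'' without explaining how such vertices or paths are produced; these two configurations behave quite differently from a bad lens (a closed interior strand, for instance, bounds a region with an oriented boundary cycle, which is closer to producing a cycle whose c-value is undefined than to producing two inequivalent minimal paths), so they cannot simply be waved at.

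In the reverse direction, the diamond/confluence strategy is a reasonable high-level idea, but as written it is a plan for a proof rather than a proof. You would need to (i) define the ``global complexity measure'' precisely and show that basic right-morphs strictly decrease it (so that a rightmost representative exists), (ii) prove the local confluence lemma, i.e.\ that two basic morphs applicable to the same path can be completed to a common descendant, and explain concretely which strand configurations obstruct this and why they are exactly the bad lenses, and (iii) derive from confluence and termination that the minimal path-equivalence class is unique and that every other path differs from it by a power of $[f]$. Step (ii) in particular needs real work: the obstruction to confluence arises from overlapping return paths $R_\alpha^{cc}, R_\beta^{cl}$ of two internal arrows sharing a subpath, and it is not at all automatic that every such overlap corresponds to a bad configuration of the strand diagram. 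Absent those details, the reverse direction is an outline of where a proof might go, not an argument one could check.
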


\subsection{Decorated Permutations}

We now connect dimer models with positroids through decorated permutations.
A \emph{decorated permutation} is a permutation $\pi$ on $[n]$ along with a coloring of the fixed points in two colors. If $i$ is a fixed point of $\pi$, we write $\pi(i)=\overline i$ or $\pi(i)=\underline i$ to denote this coloring. Decorated permutations are a positroid cryptomorphism. In other words, a decorated permutation determines a positroid, and vice versa.
The positroid associated to a decorated permutation $\pi$ is \emph{connected} if $[n]$ cannot be broken into two cyclically contiguous subsets $S$ and $T$ such that $\pi(S)\subseteq S$ and $\pi(T)\subseteq T$. We also say that $\pi$ is \emph{connected} in this case; such permutations are also called \emph{stable-interval-free} in the literature.
In~\cite{ZPostnikov}, strand diagrams were related to decorated permutations and it was shown that up to certain moves, a strand diagram is determined by its decorated permutation.

\begin{defn}
	Let $D$ be a strand diagram with no bad configurations. For a marked boundary point $i$ of $D$, let $z_i$ be the strand beginning at $i$. The \emph{decorated permutation $\pi_D$ of $D$} is the decorated permutation of $[n]$ such that if $i\neq h(z_i)$ then
	$\pi(i)=h(z_i)$, if $i=h(z_i)$ and $z_i$ winds clockwise then $\pi(z_i)=\overline{h(z_i)}$, and if $i=h(z_i)$ and $z_i$ winds counter-clockwise then $\pi(i)=\underline{h(z_i)}$.
	If $Q$ is a thin dimer model, then the \textit{decorated permutation $\pi_Q$ of $Q$} is the decorated permutation of its associated strand diagram.
\end{defn}

The permutation of the dimer model in the left of Figure~\ref{fig:ex2} is $2 5 6 1 3 4$ in one-line notation.

The following lemma is implicit in many works surrounding strand diagrams, but the authors are unable to find a direct proof. We include one here for the convenience of the reader.

\begin{lemma}\label{lem:con}
	Let $D$ be a strand diagram with no bad configurations. Then the decorated permutation $\pi_D$ of $D$ is connected if and only if $D$ is connected.
\end{lemma}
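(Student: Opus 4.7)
The plan is to prove both directions of the equivalence by contrapositive, using the equivalent characterization (from Definition~\ref{defn:postnikov-diagram}\eqref{dpd:4}) that $D$ is connected if and only if every boundary region of $D$ meets $\partial \Sigma$ in a single boundary segment.

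For the easy direction, I would assume $D$ is disconnected and produce a nontrivial cyclic partition of $[n]$. Concretely, some boundary region $R$ contains at least two boundary segments $G_1, G_2$; choose a simple arc $\gamma \subset R$ joining interior points of $G_1$ and $G_2$. Since $\gamma$ lies in the open region $R$, it meets no strand. The arc $\gamma$ cuts the disk into two half-disks, inducing cyclically contiguous sets $S, T \subset [n]$ of marked points. No strand crosses $\gamma$, so every strand has both endpoints in one of the two arcs, giving $\pi_D(S) \subseteq S$ and $\pi_D(T) \subseteq T$. Hence $\pi_D$ is disconnected.

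For the harder direction, I would assume $\pi_D$ is disconnected with partition $[n] = S \sqcup T$ into cyclically contiguous sets with $\pi(S) = S$ and $\pi(T) = T$. Let $A_S, A_T$ be the boundary arcs containing the marked points of $S, T$, separated by two empty gaps $G_L, G_R$, and partition the strands of $D$ into $\mathcal{Z}_S$ (both endpoints in $A_S$) and $\mathcal{Z}_T$ (both endpoints in $A_T$), each nonempty. The crux is to show that no $z \in \mathcal{Z}_S$ crosses any $z' \in \mathcal{Z}_T$. Granted this, $\bigcup \mathcal{Z}_S$ and $\bigcup \mathcal{Z}_T$ are two disjoint, nonempty closed subsets of the disk whose union is $D$, so $D$ is disconnected.

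To prove the crux, I would argue by contradiction via an innermost-lens argument. If some $z \in \mathcal{Z}_S$ crosses some $z' \in \mathcal{Z}_T$, then because the endpoints of $z, z'$ lie on the disjoint arcs $A_S, A_T$ of $\partial \Sigma$, a Jordan curve/parity argument forces the number of crossings between $z, z'$ to be even and at least two. I would then pick, among all pairs of crossings between any $S$-strand and any $T$-strand, an innermost pair $p, q$ between some $z, z'$, bounding a lens $L$ whose interior contains no further $S$-$T$ crossings. Any strand arc crossing into $L$ must enter and exit $\partial L$, and innermost-ness together with the containment rules forbid a single strand arc from hitting both $z$ and $z'$ in the interior of $L$; so every such arc enters and exits through the same side, contributing an even number of interior crossings to $z$ and to $z'$. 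The alternating intersection property then forces the signs at $p$ and $q$ along each of $z, z'$ to be opposite, which is precisely the orientation configuration producing a bad lens, contradicting Definition~\ref{defn:postnikov-diagram}\eqref{pd:5badlens}.

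The main obstacle is the bookkeeping inside the innermost lens: confirming that the "innermost" pair of $S$-$T$ crossings exists, that the strand arcs inside $L$ really do split as I claim (no mixed arc can connect the $z$-side to the $z'$-side without creating another $S$-$T$ crossing, ruled out by the absence of self-intersecting strands and closed cycles), and that the resulting alternating-sign parity produces the \emph{bad} lens rather than a good one. Carrying this out carefully is the technical heart of the argument.
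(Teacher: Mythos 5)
Your easy direction is fine and matches the paper's one-line claim. The hard direction, however, has a genuine gap at the final step, and the underlying approach (innermost lens) differs from the paper's.

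The gap: you conclude from ``even interior count + alternating intersection property'' that the signs at $p$ and $q$ along $z$ (and along $z'$) are opposite, and then assert that opposite signs ``is precisely the orientation configuration producing a bad lens.'' That last inference is false. Compute the crossing sign $\operatorname{sgn}(v_z\times v_{z'})$ at the two endpoints of the lens in both configurations. For a \emph{bad} lens ($z$ and $z'$ both traversing $p\to q$), the tangent of $z'$ flips its vertical component between $p$ and $q$, so the signs at $p$ and $q$ along $z$ are opposite. For a \emph{good} lens ($z$ runs $p\to q$ and $z'$ runs $q\to p$), reversing $z'$ reverses its tangent vector at both crossings, which flips \emph{both} signs simultaneously, so the signs at $p$ and $q$ along $z$ are still opposite. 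In other words, the alternating property with even interior count is equally consistent with a good lens as with a bad one; a good lens is just an oriented region, which is perfectly legal in a Postnikov diagram. So no contradiction arises from the data you have isolated. (There is also a secondary issue: your ``innermost'' criterion only excludes $S$--$T$ crossings in the \emph{interior} of $L$, but a strand entering through $z_{pq}$ and exiting through $z'_{pq}$ produces an $S$--$T$ crossing \emph{on the boundary} $\partial L$, which your criterion does not forbid; so even the claim that the interior count is even needs a more careful choice of minimizer.)

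By contrast, the paper's proof of this direction is constructive rather than extremal. Starting from a boundary point $j$ with $j\in B$ and $j-1\in A$, it builds a chain of substrands $z_1',\dots,z_m'$ greedily (each $z_i$ the next $B$-strand to cross the previous one), and shows that no $A$-strand $w$ can intersect this chain: if it did, $w$ would be trapped in the sub-disk cut out by the chain and forced either to self-intersect, to create a bad lens with one of the $z_i$, or to re-cross one of the $z_i$ contradicting the greedy choice. The chain therefore reaches the boundary and forms an explicit barrier separating the $A$-strands from the $B$-strands, exhibiting the disconnection of $D$ directly. This sidesteps entirely the need to classify the orientation of a lens, which is where your argument gets stuck.
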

\begin{proof}
	If $D$ is not connected, then it is immediate that $\pi_D$ is not connected. We must show the reverse implication.
	\begin{figure}[H]
		\centering
\def\svgscale{0.21}
\begingroup%
  \makeatletter%
  \providecommand\color[2][]{%
    \errmessage{(Inkscape) Color is used for the text in Inkscape, but the package 'color.sty' is not loaded}%
    \renewcommand\color[2][]{}%
  }%
  \providecommand\transparent[1]{%
    \errmessage{(Inkscape) Transparency is used (non-zero) for the text in Inkscape, but the package 'transparent.sty' is not loaded}%
    \renewcommand\transparent[1]{}%
  }%
  \providecommand\rotatebox[2]{#2}%
  \newcommand*\fsize{\dimexpr\f@size pt\relax}%
  \newcommand*\lineheight[1]{\fontsize{\fsize}{#1\fsize}\selectfont}%
  \ifx\svgwidth\undefined%
    \setlength{\unitlength}{1034.51599121bp}%
    \ifx\svgscale\undefined%
      \relax%
    \else%
      \setlength{\unitlength}{\unitlength * \real{\svgscale}}%
    \fi%
  \else%
    \setlength{\unitlength}{\svgwidth}%
  \fi%
  \global\let\svgwidth\undefined%
  \global\let\svgscale\undefined%
  \makeatother%
  \begin{picture}(1,0.40260471)%
    \lineheight{1}%
    \setlength\tabcolsep{0pt}%
    \put(0,0){\includegraphics[width=\unitlength,page=1]{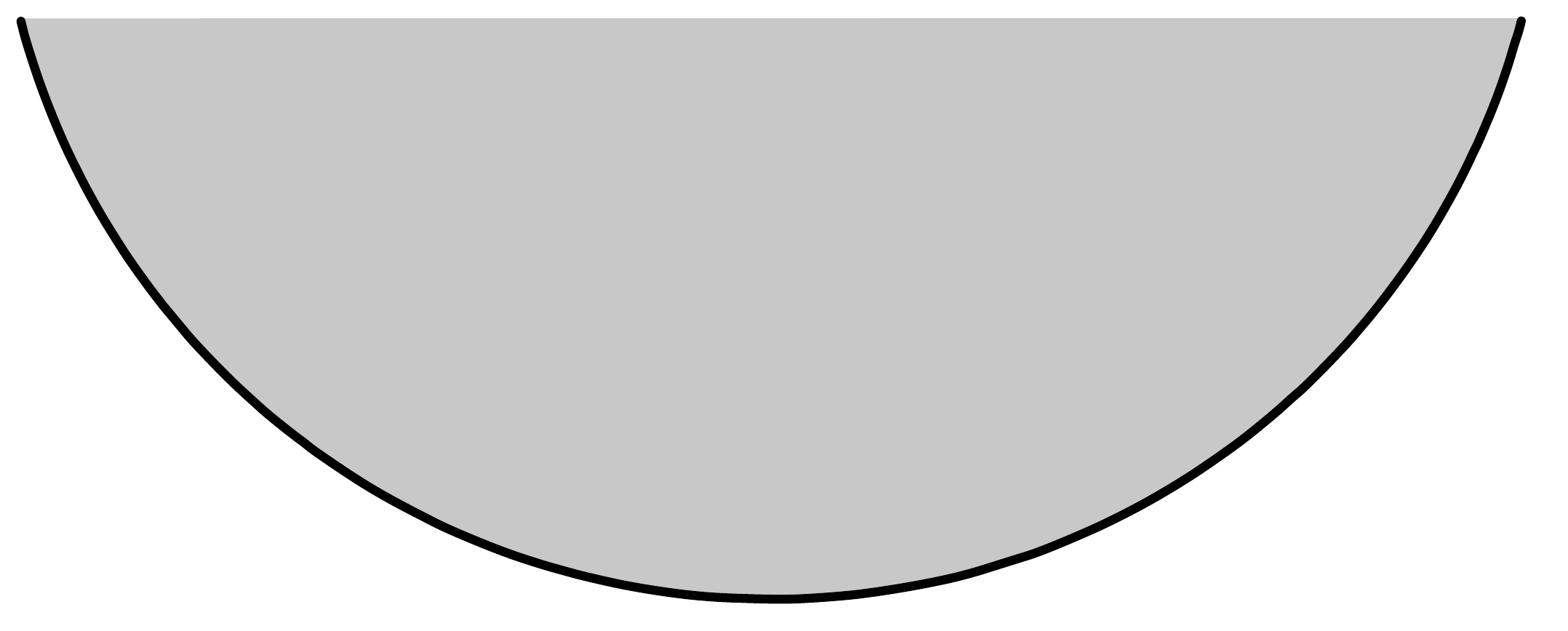}}%
    \put(0.86882659,0.13626151){\color[rgb]{0.09019608,0.08627451,0.07058824}\makebox(0,0)[lt]{\lineheight{1.25}\smash{\begin{tabular}[t]{l}$j$\end{tabular}}}}%
    \put(0.93413056,0.22510731){\color[rgb]{0.09019608,0.08627451,0.07058824}\makebox(0,0)[lt]{\lineheight{1.25}\smash{\begin{tabular}[t]{l}$j-1$\end{tabular}}}}%
    \put(0.71848479,0.30488606){\color[rgb]{0.09019608,0.08627451,0.07058824}\makebox(0,0)[lt]{\lineheight{1.25}\smash{\begin{tabular}[t]{l}$z_1$\end{tabular}}}}%
    \put(0.42079098,0.36555608){\color[rgb]{0.09019608,0.08627451,0.07058824}\makebox(0,0)[lt]{\lineheight{1.25}\smash{\begin{tabular}[t]{l}$z_2$\end{tabular}}}}%
    \put(0.26394855,0.33173388){\color[rgb]{0.09019608,0.08627451,0.07058824}\makebox(0,0)[lt]{\lineheight{1.25}\smash{\begin{tabular}[t]{l}$z_3$\end{tabular}}}}%
    \put(0.17687402,0.37103651){\color[rgb]{0.09019608,0.08627451,0.07058824}\makebox(0,0)[lt]{\lineheight{1.25}\smash{\begin{tabular}[t]{l}$w$\end{tabular}}}}%
    \put(0,0){\includegraphics[width=\unitlength,page=2]{connected.pdf}}%
  \end{picture}%
\endgroup%

		\caption{Proof of Lemma~\ref{lem:con} with $m=3$.}
		\label{fig:DLA}
	\end{figure}

	Suppose $\pi$ is not a connected positroid. Then there are nonempty cyclically contiguous subsets $A$ and $B$ of $[n]$ whose disjoint union is $[n]$ such that $\pi(A)=A$ and $\pi(B)=B$. Choose these subsets such that $B$ is a connected component of $\pi$ (i.e., there is no $B'\subsetneq B$ such that $\pi(B')=B$). Say $j\in B$ and $j-1\in A$. We say that a strand of $D$ is a strand of $A$ if it starts and ends in $A$, and otherwise is a strand of $B$. Suppose that the two strands at vertex $j$ are oriented towards $j-1$ as in Figure~\ref{fig:DLA}; the other case is symmetric.

	Let $z_1$ be the strand coming out of vertex $j$. If the first strand to intersect $z_1$ after the boundary is a strand of $B$, then let this strand be $z_2$ and let $z'_1$ be the substrand of $z_1$ from its start to this intersection. If the first strand to intersect $z_2$ before this intersection with $z_1$ is a strand of $B$, then let this strand be $z_m$ and let $z'_2$ be the substrand of $z_2$ between these two intersection points. Continue on like this; suppose that at some point we get some strand $z_m$ such that the first strand to intersect $z_m$ after its intersection with $z_{m-1}$, if $m$ is odd, or before its intersection with $z_{m-1}$, if $m$ is even, is a strand $w$ of $A$. Let $z'_m$ be the substrand of $z_m$ between its intersection with $z'_{m-1}$ and $w$, and let $z''_m$ be the substrand of $z_m$ after its intersection with $w$.

	Suppose that $m$ is odd, as in Figure~\ref{fig:DLA}; the even case is similar.
	Note that the substrands $z'_1,\dots,z'_m,z''_m$ form a segment which cuts out a smaller disk $\Sigma'$ containing the boundary vertices between $j,j+1,\dots,h(z_m)$.
	By the alternating intersection property, $w$ enters $\Sigma'$ through its intersection with $z'_m$. Since $w$ is a strand of $A$, it must leave this area after its intersection with $z'_m$. It cannot do this through an intersection with $z''_m$ without creating a bad lens and it cannot do this through an intersection with $z'_1,\dots,z'_m$ without contradicting our choice of $w$. It follows that no such $w$ may exist.

	Then this process of choosing $z_i$'s must terminate through the choice of some $z'_m$ which reaches the boundary of the disk. By the alternating intersection property, the substrands $z'_1,\dots,z'_m$ form a barrier which separates the strands of $B$ from the strands of $A$, hence no strand of $B$ intersects any strand of $A$. This shows that $D$ is disconnected.
\end{proof}

In~\cite[Corollary 14.7]{ZPostnikov}, it was shown that for any decorated permutation $\pi$ there exists a strand diagram $D_\pi$ with no bad configurations whose decorated permutation is $\pi$.
Since connected strand diagrams with no bad configurations are in bijection with thin dimer models by Theorem~\ref{thm:thin-bad-conf}, we obtain the following lemma.

\begin{lemma}\label{lem:perm-to-dimer}
	Let $\pi$ be a connected decorated permutation. There exists a thin dimer model $Q_\pi$ whose decorated permutation is $\pi$.
\end{lemma}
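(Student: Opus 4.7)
The plan is to assemble the lemma directly from the machinery already established in the section, which essentially packages together Postnikov's existence result with Lemma~\ref{lem:con} and the strand diagram $\leftrightarrow$ thin dimer model correspondence.

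First, I would invoke Postnikov's existence theorem (\cite[Corollary 14.7]{ZPostnikov}), which yields a strand diagram $D_\pi$ with no bad configurations whose decorated permutation is $\pi$. At this point $D_\pi$ is not yet known to be connected, which is exactly what is needed to apply Definition~\ref{defn:dimer-model-from-strand-diagram}. This is where the hypothesis on $\pi$ enters: since $\pi$ is connected and $D_\pi$ has no bad configurations, Lemma~\ref{lem:con} forces $D_\pi$ to be a connected strand diagram.

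With $D_\pi$ connected, Definition~\ref{defn:dimer-model-from-strand-diagram} produces a dimer model $Q_\pi := Q_{D_\pi}$ in a disk, and the sentence following that definition (citing \cite[Proposition 2.11]{ZPressland2019}) guarantees that $Q_\pi$ is thin; alternatively, Theorem~\ref{thm:thin-bad-conf} gives the same conclusion since $D_\pi$ avoids all three bad configurations. Finally, because the assignments $D \mapsto Q_D$ of Definition~\ref{defn:dimer-model-from-strand-diagram} and $Q \mapsto D_Q$ of Definition~\ref{defn:strand-diagram-from-dimer-model} are mutual inverses, the strand diagram of $Q_\pi$ is $D_\pi$, and hence by definition its decorated permutation is $\pi_{D_\pi} = \pi$, as required.

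There is essentially no technical obstacle here: the content of the lemma is already contained in the cited results, and the only substantive step is recognizing that connectedness of $\pi$ is precisely what promotes Postnikov's (possibly disconnected) strand diagram to one that feeds into the dimer model construction. The proof is short and reads as a chain of invocations rather than a new argument.
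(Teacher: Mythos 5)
Your proposal is correct and follows exactly the route the paper takes: Postnikov's existence result, then Lemma~\ref{lem:con} to upgrade to a connected strand diagram, then the correspondence with thin dimer models via Theorem~\ref{thm:thin-bad-conf}. If anything, you make the role of Lemma~\ref{lem:con} more explicit than the paper does, which states the conclusion immediately after the preceding paragraph and leaves the connectedness step implicit.
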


The following result was proven first for $(k,n)$-Postnikov diagrams in~\cite{ZBKM}, and for general dimer models on disks in~\cite{ZPressland2015}. It is proven by showing that certain moves on strand diagrams preserve the boundary algebra.
While these sources do not explicitly state that the isomorphism preserves the idempotents corresponding to boundary vertices, this is clear by the proof.
\begin{prop}[{\cite[Corollary 4.7]{ZPressland2015}}]\label{prop:ba-from-perm}
	If $Q$ and $Q'$ are thin dimer models such that $\pi_Q=\pi_{Q'}$, then there is an isomorphism $\phi:B_Q\to B_{Q'}$ which, for any boundary vertex $j$, sends the idempotent $e_j\in B_Q$ to the idempotent $e_j\in B_{Q'}$.
\end{prop}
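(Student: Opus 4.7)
The plan is to reduce to the case where the strand diagrams $D_Q$ and $D_{Q'}$ differ by a single local move, and handle that case directly. By Postnikov's classification~\cite[Theorem 13.4]{ZPostnikov}, two strand diagrams without bad configurations on the disk with the same decorated permutation are related by a finite sequence of \emph{geometric exchange} (square) moves, each of which preserves the property of having no bad configurations. Composing the isomorphisms obtained from each move in such a sequence then yields the global isomorphism $\phi : B_Q \to B_{Q'}$, and since none of the moves touch the boundary marked points, the boundary vertex labeling---and hence the idempotents $e_j$---are preserved throughout.

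So suppose $D_Q$ and $D_{Q'}$ differ by a single square move, supported inside a contractible disk $U$ in the interior of the ambient disk. Under the correspondence of Definition~\ref{defn:strand-diagram-from-dimer-model}, the dimer models $Q$ and $Q'$ agree outside $U$, while inside $U$ the square move replaces one ``square'' configuration of four faces meeting at an internal vertex by its dual. In particular, every boundary vertex of $Q$ is also a boundary vertex of $Q'$. I would define $\phi$ on boundary paths as follows: using thinness (Theorem~\ref{thm:thin-bad-conf}), a boundary path $p$ in $Q$ may be replaced with a path-equivalent representative whose excursions into $U$ are each one of finitely many ``basic'' local segments, and each such segment has a natural counterpart in $Q'$ produced by the square move. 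Then $\phi([p])$ is defined by substituting these counterparts. Setting $\phi(e_j) = e_j$ on boundary idempotents is automatic since these vertices lie outside $U$.

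The main obstacle will be showing that $\phi$ is well-defined and an algebra homomorphism. The relations of $A_Q$ coming from internal arrows outside $U$ are preserved by construction, so the verification reduces to checking the internal-arrow relations inside $U$ together with checking that the choice of basic local segment for a given entry/exit pattern does not affect $\phi([p])$. Here cancellativity (Theorem~\ref{thm:cancellative-closure}) is essential: it lets me pre- and post-multiply by minimal boundary-connecting paths and reduce the check to comparing distinguished boundary-to-boundary paths crossing $U$ once. A direct case analysis on the basic local path types (four in number, up to the rotational symmetry of the square) shows that $\phi$ matches the path-equivalence classes in $B_Q$ to the corresponding classes in $B_{Q'}$. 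The inverse $\phi^{-1}$ is constructed symmetrically by reversing the square move, yielding the desired isomorphism fixing each $e_j$.
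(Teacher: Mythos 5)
Your approach---decompose the passage from $Q$ to $Q'$ into a sequence of local moves on strand diagrams and verify that each move preserves the boundary algebra while fixing boundary vertices---is precisely the strategy behind the cited result of Pressland. The paper itself gives no argument for Proposition~\ref{prop:ba-from-perm} beyond the citation and the one-line summary that it ``is proven by showing that certain moves on strand diagrams preserve the boundary algebra.''

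There is, however, a genuine gap in your move set. Postnikov's Theorem~13.4 concerns reduced \emph{plabic graphs} connected by moves (M1)--(M3), and translated to strand diagrams / dimer models the required moves are the geometric exchange (square move) \emph{together with} a boundary-untwisting move that removes a boundary digon (a digon face with one boundary arrow), not the square move alone. The untwisting move is genuinely needed and cannot be assembled from square moves: a square move is a quiver mutation at a $4$-valent internal vertex and preserves the number of vertices and faces of the dimer model, whereas two thin dimer models with the same decorated permutation need not have the same vertex and face counts, since a thin dimer model may contain boundary digons. The paper itself signals this in its final Remark, where one must \emph{choose} a thin dimer model ``with no digons'' by taking a reduced Postnikov diagram via~\cite[Proposition 2.10]{ZPressland2015}, precisely because not every thin dimer model for a given $\pi$ is already digon-free. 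Relatedly, your assertion that ``none of the moves touch the boundary marked points'' fails for the untwisting move, which alters the quiver at a boundary vertex (it removes a boundary arrow and merges an adjacent internal vertex in); the boundary idempotents are still fixed, but this requires its own short observation rather than following for free. To complete the argument you would need to add the untwisting move to your case analysis and check separately that it preserves the boundary algebra and the boundary idempotents.
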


In light of Proposition~\ref{prop:ba-from-perm}, one may define the \emph{boundary algebra $B^\pi$ of a connected positroid $\pi$} to be the algebra $B_Q$ where $Q$ is any thin dimer model with decorated permutation $\pi$. The goal of this paper is to describe $B^\pi$ in terms of $\pi$.

\subsection{Dimer Submodels}

We now define dimer submodels, allowing us to pass to a smaller disk within a dimer model.

\begin{defn}\label{defn:disk-submodel}
        Let $Q=(Q_0,Q_1,Q_2)$ be a dimer model in a disk. A \textit{dimer submodel} of $Q$ is a dimer model in a disk $Q'=(Q'_0,Q'_1,Q'_2)$ such that $Q'_0\subseteq Q_0$ and $(Q'_1,Q'_2)$ are all of the edges and faces in $(Q_1,Q_2)$ containing only vertices in $Q'_0$.
\end{defn}

If $Q$ is a dimer model, then there is an embedding of $Q$ into a disk. By choosing a set of faces of $Q$ in this embedding which themselves make up a disk, we get a dimer submodel of $Q$. All dimer submodels of $Q$ are obtained in this way.

The following result is a special case of~\cite[Theorem 5.3]{BS}. We provide a proof here for the convenience of the reader.

\begin{thm}[{\cite[Theorem 5.3]{BS}}]\label{thm:submodel}
	If $Q$ is thin, then any dimer submodel $Q'$ of $Q$ is thin. Moreover, paths $p$ and $q$ of $Q'$ are equivalent in $Q'$ if and only if they are equivalent as paths of $Q$.
\end{thm}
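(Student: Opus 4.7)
The plan is to prove thinness of $Q'$ first using the strand-diagram criterion, and then derive the equivalence statement from thinness of $Q'$ together with cancellativity of $A_Q$.

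For the first part, I would apply Theorem~\ref{thm:thin-bad-conf}. The strand diagram $D_{Q'}$ of $Q'$ is, by construction, the restriction of $D_Q$ to the sub-disk $\Sigma'$ carrying $Q'$: each strand segment of $D_{Q'}$ lives inside a face of $Q'$, which is also a face of $Q$, and strands of $D_Q$ that exit $\Sigma'$ simply terminate at the new boundary of $\Sigma'$. Any bad configuration in $D_{Q'}$ (self-intersecting strand, closed cycle, or bad lens) involves finitely many strand segments and intersection points contained in $\Sigma'$; tracing these back to the corresponding strands of $D_Q$ produces a bad configuration of the same type in $D_Q$. Since $Q$ is thin, Theorem~\ref{thm:thin-bad-conf} guarantees that $D_Q$ has no bad configurations, so neither does $D_{Q'}$, and a second application of Theorem~\ref{thm:thin-bad-conf} yields thinness of $Q'$.

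For the second statement, the forward direction is immediate: a basic morph in $Q'$ uses an internal arrow $\alpha$ of $Q'$ whose two bounding faces are faces of $Q'$, hence also of $Q$, so the same morph is valid in $Q$. For the reverse direction, suppose $[p]_Q = [q]_Q$ for paths $p,q$ of $Q'$. Using the just-proven thinness of $Q'$, write $[p]_{Q'} = [r'_p (f_{Q'})^{c_p}]_{Q'}$ and $[q]_{Q'} = [r'_q (f_{Q'})^{c_q}]_{Q'}$, where $r'_p, r'_q$ are minimal paths in $Q'$ from $t(p)=t(q)$ to $h(p)=h(q)$. Uniqueness of the minimal equivalence class in $Q'$ gives $[r'_p]_{Q'} = [r'_q]_{Q'}$. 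Lifting these equivalences to $A_Q$ via the forward direction and combining with $[p]_Q=[q]_Q$ yields $[r'_q (f_{Q'})^{c_p}]_Q = [r'_q (f_{Q'})^{c_q}]_Q$ in $A_Q$.

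Next, by Remark~\ref{faces-are-equivalent}, any two face-paths of $Q$ at a vertex $v \in Q'_0$ are equivalent in $A_Q$, so writing $f_{Q'} = \sum_{v \in Q'_0} f_{Q',v}$ and noting that only the endpoint term survives each multiplication, the above simplifies to $[r'_q (f_{Q,h(r'_q)})^{c_p}]_Q = [r'_q (f_{Q,h(r'_q)})^{c_q}]_Q$. By Theorem~\ref{thm:cancellative-closure}, $A_Q$ is cancellative, so I cancel $r'_q$ on the left to obtain $[(f_{Q,h(r'_q)})^{c_p}]_Q = [(f_{Q,h(r'_q)})^{c_q}]_Q$. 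Since the c-value of $(f_{Q,v})^m$ in the thin dimer model $Q$ is $m$, uniqueness of the thinness decomposition forces $c_p = c_q$. Combined with $[r'_p]_{Q'}=[r'_q]_{Q'}$, we conclude $[p]_{Q'}=[q]_{Q'}$, completing the proof. The main obstacle is the first part: making rigorous the claim that $D_{Q'}$ arises as the restriction of $D_Q$ to $\Sigma'$ and that bad configurations lift cleanly, which requires carefully tracking how strands of $D_Q$ interact with the new boundary arrows of $Q'$ under truncation.
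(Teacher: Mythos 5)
Your proposal is correct and follows essentially the same approach as the paper: the first part uses Theorem~\ref{thm:thin-bad-conf} together with the observation that restricting to a submodel restricts the strand diagram and hence cannot create bad configurations, and the second part uses the thinness of $Q'$ to express $p$ and $q$ via the same minimal path and compares their $c$-values after lifting to $Q$. The only cosmetic difference is that you invoke cancellativity of $A_Q$ explicitly where the paper appeals directly to the uniqueness of the $c$-value in the thinness condition; these are equivalent by Theorem~\ref{thm:cancellative-closure}.
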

\begin{proof}
	By Theorem~\ref{thm:thin-bad-conf}, 
	thinness is equivalent to the absence of bad configurations of the associated strand diagram. Restricting a dimer model to a submodel simply restricts the strand diagram, and hence may not create bad configurations. This shows the first statement.
	We now address the latter statement.
	Take paths $p$ and $q$ of $Q$ with the same start and end vertices. Without loss of generality we may suppose that $[p]=[qf^m]$ for some $m\geq0$ in $Q'$. Then there is a sequence of basic morphs taking $p$ to $qf^m$ in $Q'$; this is also a sequence of basic morphs in $Q$, so $[p]=[qf^m]$ in $Q$ as well.
	Then $[p]=[q]$ in $Q'$ if and only if $m=0$ if and only if $[p]=[q]$ in $Q$.
\end{proof}

\section{Arrow-Defining Paths and Relation Numbers}\label{sec:section-thin-finite-dimer-model}
\label{sec:TDM}

We now prove some lemmas about thin dimer models using the algebraic thinness condition. 
We define and give a preliminary characterization for the paths of the dimer algebra which give rise to arrows of the boundary algebra.
For any minimal path $p$, we define \emph{relation numbers} comparing it to the paths from $t(p)$ to $h(p)$ which follow along the boundary of the dimer model.

\subsection{Arrow-Defining Paths and Direct Paths}

In the following, let $Q$ be a thin dimer model.
Label the boundary vertices of $Q$ from 1 to $n$ in clockwise order. See the dimer model on the left of Figure~\ref{fig:ex2}. For any $i$, there is a unique boundary arrow $\alpha_i$ between vertices $i$ and $i+1$. Moreover, the arrow $\alpha_i$ has a unique return path $R_{\alpha_i}$. If $\alpha_i:i\to i+1$ then we set $x_i:=\alpha_i$ and $y_i:=R_{\alpha_i}$. If $\alpha_i:i+1\to i$ then we set $y_i:=\alpha_i$ and $x_i:=R_{\alpha_i}$. We set $x:=\sum_{i=1}^nx_i$ and $y:=\sum_{i=1}^ny_j$.
For convenience, we introduce the notation $x_i^m:=x_ix_{i+1}\dots x_{i+m-1}$ and $y_i^m:=y_{i}y_{i-1}\dots y_{i-m+1}$. Then $[x_i^m]=[e_ix^m]$ and $[y_{i-1}^m]=[e_{i}y^m]$.
We always consider indices of boundary vertices modulo $n$.

\begin{lemma}\label{x-y-paths-thin}
	The following facts hold for all $i\in[n]$.
	\begin{enumerate}
		\item\label{61} The compositions $x_iy_i$ and $y_ix_i$ are face-paths of $Q$.
		\item\label{60} The paths $x_i$ and $y_i$ are minimal. 
		\item\label{62} There is an equality $[xy]=[yx]$ in $A_Q$.
	\end{enumerate}
\end{lemma}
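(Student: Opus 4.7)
The plan is to dispatch each of the three claims separately. Parts~\eqref{61} and~\eqref{60} should follow almost immediately from the definitions of $x_i$, $y_i$ together with Lemma~\ref{lem:subpath-of-facepath-is-thin}, while part~\eqref{62} is a bookkeeping calculation based on part~\eqref{61}.

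For part~\eqref{61}, I would argue directly from the construction. Recall that $\alpha_i$ is the unique boundary arrow between vertices $i$ and $i+1$, and that $R_{\alpha_i}$ is its unique return path, chosen so that $\alpha_i R_{\alpha_i}$ (or $R_{\alpha_i}\alpha_i$, depending on orientation) traverses the unique face of $Q$ containing $\alpha_i$. In the case $\alpha_i:i\to i+1$, we set $x_i=\alpha_i$ and $y_i=R_{\alpha_i}$; then $x_iy_i$ goes once around this face starting at $i$ and $y_ix_i$ goes once around the same face starting at $i+1$, so both are face-paths. The case $\alpha_i:i+1\to i$ is symmetric with the roles of $x_i$ and $y_i$ swapped.

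For part~\eqref{60}, note that part~\eqref{61} exhibits $x_i$ as a subpath of the face-path $x_iy_i$, and that it is a \emph{proper} subpath because $y_i$ contains at least one arrow. Lemma~\ref{lem:subpath-of-facepath-is-thin} then gives that $x_i$ is minimal. The argument for $y_i$ is identical using the face-path $y_ix_i$.

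For part~\eqref{62}, expand $[xy]$ and $[yx]$ using $x=\sum_i x_i$ and $y=\sum_j y_j$. Since $x_i$ ends at $i+1$ and $y_j$ starts at $j+1$, the product $x_iy_j$ is zero unless $i=j$, so $[xy]=\sum_i [x_iy_i]$; similarly $[yx]=\sum_i [y_ix_i]$. By part~\eqref{61}, $[x_iy_i]$ is a face-path at $i$ and $[y_ix_i]$ is a face-path at $i+1$. Reindexing $j=i+1$ in the second sum, both $[xy]$ and $[yx]$ become $\sum_j [f_j]$ summed over boundary vertices $j$, and hence are equal in $A_Q$. None of the three steps presents a real obstacle; the only place requiring care is handling both orientations of the boundary arrow $\alpha_i$ in part~\eqref{61}, but the two cases are completely symmetric.
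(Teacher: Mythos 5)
Your proof is correct and takes essentially the same route as the paper: parts~\eqref{61} and~\eqref{60} are immediate from the definitions and Lemma~\ref{lem:subpath-of-facepath-is-thin}, and part~\eqref{62} reduces to the fact that face-paths at a common vertex are path-equivalent (Remark~\ref{faces-are-equivalent}), which you invoke implicitly when you collapse $[x_jy_j]$ and $[y_{j-1}x_{j-1}]$ to $[f_j]$; the paper simply notes directly that $x_iy_i$ and $y_{i-1}x_{i-1}$ are both face-paths at $i$ rather than reindexing the sums.
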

\begin{proof}
	It is clear by definition that $x_iy_i$ and $y_{i}x_{i}$ are face-paths of $Q$, showing~\eqref{61}. Then by Lemma~\ref{lem:subpath-of-facepath-is-thin}, $x_i$ and $y_i$ are minimal for all $i$, showing~\eqref{60}. Moreover, $x_iy_i$ and $y_{i-1}x_{i-1}$ are face-paths of $Q$ starting at the same vertex $e_i$, hence $[x_iy_i]=[y_{i-1}x_{i-1}]$ for all $i\in[n]$ by Remark~\ref{faces-are-equivalent}, showing~\eqref{62}.
\end{proof}

\begin{cor}\label{xy-is-c-on-boundary}
	Let $p$ be a boundary path and let $r$ be a minimal path from $t(p)$ to $h(p)$. Then $[p]=[(xy)^{C(p)}r]$, where $C(p)$ is the c-value of $p$.
\end{cor}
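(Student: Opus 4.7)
The plan is to apply the thinness condition directly to $p$ and then identify the central element $[f]^{C(p)}$ with $[(xy)^{C(p)}]$ when acting on paths whose source is a boundary vertex.

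First, I would apply Definition~\ref{defn:algebraic-consistency} to the boundary path $p$ together with a minimal path $r$ from $t(p)$ to $h(p)$, which immediately yields $[p] = [rf^{C(p)}]$ for the unique $C(p) \geq 0$. Since $[f]$ is central in $A_Q$ (as noted in the paragraph following Remark~\ref{faces-are-equivalent}), I would commute $f^{C(p)}$ past $r$ to obtain $[p] = [f^{C(p)} r]$.

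The key remaining step is to show $[f^m r] = [(xy)^m r]$ for any $m \geq 0$, where $r$ starts at the boundary vertex $t(p)$. Writing $f = \sum_v f_v$ and expanding, every cross term $f_{v_1} \cdots f_{v_m}$ vanishes unless $v_1 = \cdots = v_m$, because each $f_v$ is a cycle at $v$; hence $f^m = \sum_v f_v^m$. Composing with $r$ kills every summand with $v \neq t(p)$, leaving $[f^m r] = [f_{t(p)}^m r]$. By Lemma~\ref{x-y-paths-thin}\eqref{61}, $x_{t(p)} y_{t(p)}$ is itself a face-path at $t(p)$, so Remark~\ref{faces-are-equivalent} gives $[f_{t(p)}] = [x_{t(p)} y_{t(p)}]$, and therefore $[f_{t(p)}^m r] = [(x_{t(p)} y_{t(p)})^m r]$. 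The same index analysis applied to $(xy)^m = \sum_{i \in [n]} (x_i y_i)^m$ leaves only the term with $i = t(p)$ when composed with $r$, so $[(xy)^m r] = [(x_{t(p)} y_{t(p)})^m r]$, matching the previous expression. Taking $m = C(p)$ and chaining the equalities yields $[p] = [(xy)^{C(p)} r]$.

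No serious obstacle arises: the proof is essentially bookkeeping, combining Definition~\ref{defn:algebraic-consistency}, the centrality of $[f]$, and the fact that at a boundary vertex $i$ the path $x_i y_i$ represents the face-path class $[f_i]$. The only subtlety worth flagging carefully is justifying that no nontrivial summands of $f^{C(p)}$ or $(xy)^{C(p)}$ interact with $r$ beyond the single vertex $t(p)$, which is where the idempotent decomposition of the path algebra is invoked.
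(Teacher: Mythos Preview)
Your proof is correct and follows essentially the same approach as the paper. The paper's one-line argument simply says to apply the thinness property with the face-path at each boundary vertex $i$ chosen to be $x_i y_i$; you have spelled out the idempotent bookkeeping (that only the $v=t(p)$ summand of $f^m$ and of $(xy)^m$ survives when composed with $r$) and the use of centrality of $[f]$ that the paper leaves implicit.
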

\begin{proof}
	This is the thinness property of Definition~\ref{defn:algebraic-consistency} where the face-path of $f$ starting at any boundary vertex $i$ is chosen to be $x_iy_i$, which is a face-path by Lemma~\ref{x-y-paths-thin}~\eqref{61}.
\end{proof}

\begin{defn}
	Let $p$ be a cycleless boundary path in $Q$. Then $p$ subdivides $Q$ into a left side and a right side. Let $L(p)\subseteq Q_2$ consist of the set of faces which are to the left of $p$. Let $R(p)\subseteq Q_2$ consist of the set of faces which are to the right of $p$.
Note that if $q$ is a left-morph of $p$ and both are cycleless, then $L(q)\subsetneq L(p)$.
	For any cycleless boundary paths $p$ and $q$ between the same vertices, we say that $q$ is \emph{to the left of $p$} (or $p$ is \emph{to the right of $q$}) if $L(q)\subseteq L(p)$ (equivalently, $R(q)\supseteq R(p)$).
\end{defn}

The following two results appear in~\cite{BS}, but we prove them again here to be as self-contained as possible.

\begin{prop}\label{thm:leftmost-c-valueZ}
	Let $p$ be any minimal boundary path and let $q$ be any rightmost path from $t(p)$ to $h(p)$. Then $p$ is to the left of $q$.
\end{prop}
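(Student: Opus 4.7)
The plan is to combine a local analysis of basic right-morphs with a confluence argument for the right-morph rewriting system. First, I would verify that a basic right-morph $p \to p'$, which replaces a subpath $R_\alpha^{cl}$ by $R_\alpha^{cc}$, satisfies $L(p) \subseteq L(p')$. Locally, the two faces $F^{cc}$ and $F^{cl}$ containing the internal arrow $\alpha$ are sandwiched between $R_\alpha^{cl}$ and $R_\alpha^{cc}$ (with $\alpha$ in between); both lie to the right of $p$ along the old subpath and to the left of $p'$ along the new one. All other face placements are unaffected, so $L(p') = L(p) \cup \{F^{cc}, F^{cl}\}$.

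Next, I would argue that $q \in [p]$. Since $p$ is minimal it is cycleless: any nonconstant subcycle would contribute a positive $c$-value by Remark~\ref{rem:cycle-cant-be-constant}, forcing $[p]=[rf^m]$ for some $m\geq 1$ and contradicting minimality. The definition of $L(q)$ implicitly requires $q$ to be cycleless as well; such a $q$ has $c$-value $0$, and by Definition~\ref{defn:algebraic-consistency} this places $q$ in the unique minimal equivalence class, namely $[p]$.

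The main step is to show that iterated application of basic right-morphs from $p$ terminates exactly at $q$. Termination is clear since each basic right-morph strictly enlarges $L \subseteq Q_2$ and $Q_2$ is finite. For local confluence, the key observation is that the paths $R_\alpha^{cl}$ and $R_\beta^{cl}$ of distinct clockwise faces are arrow-disjoint, because each internal arrow lies in a unique clockwise face by parts (2) and (3) of Definition~\ref{defn:dimer-model}, and each boundary arrow lies in only one face. Hence two basic right-morphs at distinct subpaths of a common path commute: neither morph alters or disables the other, and applying them in either order produces the same result. By Newman's lemma, a terminating and locally confluent rewriting system has a unique normal form per equivalence class, so the rightmost path in $[p]$ is unique and must equal $q$. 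Iterating the first step along a chain of right-morphs from $p$ to $q$ yields the desired inclusion $L(p) \subseteq L(q)$.

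The main obstacle is the confluence step: one must verify that two basic right-morphs at distinct positions of a path never overlap in a problematic way. Reducing this to the arrow-disjointness of distinct clockwise faces, and confirming that the replacement $R_\alpha^{cl} \to R_\alpha^{cc}$ leaves other applicable morphs intact, is the main technical content.
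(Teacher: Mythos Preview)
Your approach is genuinely different from the paper's, and it has a real gap in Step 2.

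\textbf{The gap.} You assert that a cycleless $q$ ``has $c$-value $0$'' and hence lies in $[p]$. But Remark~\ref{rem:cycle-cant-be-constant} (together with Lemma~\ref{lem:c-values-compose}) only yields the implication \emph{minimal $\Rightarrow$ cycleless}: if some subpath of $q$ were a nonconstant cycle, then $C(q)>0$. The converse, \emph{cycleless $\Rightarrow$ minimal}, is exactly what you need and is not proven anywhere. The proposition is stated for \emph{any} rightmost path $q$, and the paper's proof genuinely handles $q\notin[p]$; your confluence argument, by contrast, produces a unique rightmost normal form $p_\infty$ \emph{inside} $[p]$ and then needs $q=p_\infty$, which fails unless you can force $q\in[p]$. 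Proving ``cycleless rightmost $\Rightarrow$ minimal'' does not seem easier than the proposition itself --- the natural submodel arguments one tries (restricting to $R(q)$ or $L(q)$) show that $q$ is leftmost or rightmost in the submodel, but not that it is the unique representative of its class there.

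\textbf{Comparison of approaches.} The paper argues by contradiction: if $p$ strays to the right of $q$, take a simple lens $p'(q')^{-1}$ and pass to that dimer submodel $Q'$. There $q'$ is rightmost (inherited from $q$) \emph{and} leftmost (it winds clockwise, so any left-morph would require a counter-clockwise face on the exterior of $Q'$). Hence $q'$ is the sole representative of its class in $Q'$, so it is minimal, forcing $C(p')>0$ in $Q'$ and hence in $Q$ --- contradicting minimality of $p$. This works for arbitrary rightmost $q$ with no hypothesis on $C(q)$. Your rewriting argument, once restricted to the minimal class $[p]$, is clean: right-morphs stay in $[p]$, all paths in $[p]$ are cycleless, $|L|$ strictly increases so the system terminates, and arrow-disjointness of distinct clockwise faces gives local confluence. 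Via Newman's lemma this establishes Corollary~\ref{cor:leftmost-unique} directly (unique minimal rightmost path) and yields $L(p)\subseteq L(p_\infty)$. But it proves the proposition only after the missing step $q\in[p]$ is supplied.
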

\begin{proof}
	Suppose to the contrary that $p$ is not to the left of $q$. 
		Then we may choose subpaths $p'$ and $q'$ of $p$ and $q$, respectively, such that $p'(q')^{-1}$ is a simple counter-clockwise loop (as a path in the disk $\Sigma$). Since $p$ is minimal, its subpath $p'$ must also be minimal.
	Let $Q'$ be the dimer submodel given by restricting $Q$ to the area bounded by $p'(q')^{-1}$.
	The submodel $Q'$ is thin by Theorem~\ref{thm:submodel}. The path $q$ is rightmost in $Q$ so its subpath $q'$ is rightmost in $Q'$. Since $q'$ winds clockwise around the boundary of $Q'$, it cannot have any left-morphs, hence $q'$ is the only path in its equivalence class in $Q'$. In particular, a face-path cannot be factored out of $q'$, so $q'$ is minimal in $Q'$ and $[p']=[q'f^m]$ in $Q'$ for some $m>0$. By Theorem~\ref{thm:submodel}, $[p']=[q'f^m]$ in $Q$, hence $p'$ is not minimal in $Q$, a contradiction.
\end{proof}

\begin{cor}\label{cor:leftmost-unique}
	Let $v_1$ and $v_2$ be vertices of a thin dimer model $Q$. There is a unique minimal rightmost path from $v_1$ to $v_2$, and a unique minimal leftmost path from $v_1$ to $v_2$.
\end{cor}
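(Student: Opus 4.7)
The plan is to prove the rightmost case; the leftmost case follows by symmetry. Existence will proceed by an iterative right-morph argument starting from any minimal path, and uniqueness by two applications of Proposition~\ref{thm:leftmost-c-valueZ}.

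For existence, I first invoke the thinness condition (Definition~\ref{defn:algebraic-consistency}) to choose any minimal path $r$ from $v_1$ to $v_2$. Any minimal path is automatically cycleless: by Remark~\ref{rem:cycle-cant-be-constant} every nonconstant cycle $c$ has $C(c)>0$, so a path containing a cycle as a subpath cannot be minimal (one can factor out $[f]$ using the centrality of $[f]$). I then repeatedly perform basic right-morphs; since these preserve the path-equivalence class, the resulting path remains minimal (and hence cycleless) throughout. By the observation recorded alongside the definition of $L(p)$ and $R(p)$, each right-morph strictly decreases $|R(p)|$, a nonnegative integer bounded above by $|Q_2|$. The procedure therefore terminates at a path admitting no basic right-morph, which by construction is a minimal rightmost path.

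For uniqueness, suppose $p_1$ and $p_2$ are two minimal rightmost paths from $v_1$ to $v_2$. The thinness condition gives a unique equivalence class of minimal paths between any two vertices, so $[p_1]=[p_2]$. Applying Proposition~\ref{thm:leftmost-c-valueZ} with $p:=p_1$ (minimal) and $q:=p_2$ (rightmost in the same equivalence class) yields that $p_1$ is to the left of $p_2$. Swapping the roles of $p_1$ and $p_2$ gives the reverse containment, so $L(p_1)=L(p_2)$; since a cycleless path in a disk is determined by the set of faces lying on its left, $p_1=p_2$.

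I expect the only real obstacle to be notational: $L(p)$, $R(p)$, and the relation ``to the left of'' are defined in the paper only for cycleless boundary paths, whereas the corollary permits arbitrary vertices $v_1,v_2$. This should be painless to handle, since the proof of Proposition~\ref{thm:leftmost-c-valueZ} only uses that a failure of ``$p$ is to the left of $q$'' produces subpaths $p'$ and $q'$ such that $p'(q')^{-1}$ bounds a simple counter-clockwise disk region, a purely local condition which makes sense for any two cycleless paths sharing endpoints. With this mild extension of vocabulary, both Proposition~\ref{thm:leftmost-c-valueZ} and the uniqueness argument above apply uniformly to general vertex pairs.
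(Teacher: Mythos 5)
Your proof is correct and takes essentially the same route as the paper's: build a minimal rightmost path by repeatedly applying basic right-morphs to any minimal path (terminating because $R(p)$ strictly shrinks), then apply Proposition~\ref{thm:leftmost-c-valueZ} in both directions for uniqueness. The extra care you take with cyclelessness (so that $L(p)$, $R(p)$ are well-defined) and with the boundary-vs-internal vertex issue is fine, but does not change the underlying argument.
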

\begin{proof}
	We prove the existence of a unique minimal rightmost path; the leftmost result is dual.
	Let $p$ be any minimal path from $v_1$ to $v_2$. We repeatedly take basic right-morphs to get a sequence $p=p_0,p_1,\dots$ such that $p_{i+1}$ is a basic right-morph of $p_i$. This process must terminate in a rightmost path $p_m$ since $R(p_{i+1})\subsetneq R(p_i)$ holds at each step.
	Moreover, if $q$ is any rightmost minimal boundary path from $v_1$ to $v_2$, then Proposition~\ref{thm:leftmost-c-valueZ} shows that $p_m$ is to the left of $q$ and, separately, that $q$ is to the left of $p_m$, hence we must have $p_m=q$. Then $p_m$ is the \emph{unique} minimal rightmost path from $v_1$ to $v_2$.
\end{proof}

\begin{defn}\label{def:arrow-defining-paths}\label{defn:right-left-pair}
	A cycleless boundary path $p$ of $q$ is
    \begin{itemize}
	    \item[(a)] 
 \textit{direct} if it passes through no boundary vertices other than its start and end vertices, and it is
        \item[(b)] \textit{arrow-defining} if every path in its equivalence class is direct.
    \end{itemize}
\end{defn}

It is immediate that any arrow-defining path is minimal, since for any boundary vertex $i$ the face-path $x_iy_i$ factors through $i+1$ and $y_{i-1}x_{i-1}$ factors through $i-1$. We now show that the (path-equivalence classes of) arrow-defining paths are precisely those which give rise to arrows in the Gabriel quiver of the boundary algebra. 

\begin{lemma}\label{lem:arrow-defining-defines-arrow}
	Let $Q$ be a thin dimer model. There is an arrow from $v_1$ to $v_2$ in the boundary algebra $B_Q$ if and only if the minimal path from $v_1$ to $v_2$ in $A_Q$ is arrow-defining.
\end{lemma}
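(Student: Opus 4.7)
The plan is to translate ``arrow from $v_1$ to $v_2$ in the Gabriel quiver of $B_Q$'' into the algebraic statement that the class $[r]$ of the minimal path $r:v_1\to v_2$ cannot be written as a $\k$-linear combination of products $[a][b]$ with each $[a]$ and $[b]$ a nonconstant boundary path in $B_Q$. First I would apply Corollary~\ref{xy-is-c-on-boundary} to identify $e_{v_2}B_Q e_{v_1}$ as the $\k$-vector space with basis $\{[(xy)^m r]\}_{m\geq 0}$. Each $[(xy)^m r]$ with $m\geq 1$ factors in $B_Q$ as a product of $[r]$ with $m$ copies of a face-cycle $[x_vy_v]$ at an endpoint $v$, each factor being a nonconstant boundary path; so these higher basis elements already lie in the square of the ideal of nonconstant boundary paths. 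The existence of an arrow from $v_1$ to $v_2$ therefore reduces to whether $[r]$ itself lies in that square.

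Assume $r$ is arrow-defining and suppose for contradiction that $[r]=\sum_i c_i[a_i][b_i]$ in $B_Q$ with each $[a_i],[b_i]$ a nonconstant boundary path and each product $[a_i][b_i]\in e_{v_2}B_Q e_{v_1}$. After splitting by idempotents each product corresponds to a concatenation $P_i$ of $a_i$ and $b_i$ which is a boundary path from $v_1$ to $v_2$ passing through some boundary vertex $w_i$. By thinness $[P_i]=[(xy)^{m_i}r]$ for a unique $m_i\geq 0$, so $[r]=\sum_i c_i[(xy)^{m_i}r]$, and linear independence of the basis forces $\sum_{i:m_i=0}c_i=1$; hence some $i$ has $m_i=0$ and $[P_i]=[r]$. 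I then exclude $w_i\in\{v_1,v_2\}$: if $w_i=v_1$, one of $a_i,b_i$ is a nonconstant cycle at $v_1$, which by Remark~\ref{rem:cycle-cant-be-constant} has c-value at least $1$, and Lemma~\ref{lem:c-values-compose} then gives $m_i=C(P_i)\geq 1$, contradicting $m_i=0$; the case $w_i=v_2$ is symmetric. Hence $P_i\sim r$ passes through the intermediate boundary vertex $w_i\notin\{v_1,v_2\}$, contradicting arrow-definition.

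For the converse, if $r$ is not arrow-defining then some $r'\sim r$ passes through an intermediate boundary vertex $w$; splitting $r'=ab$ at $w$ with $a:v_1\to w$ and $b:w\to v_2$ both nonconstant boundary paths gives $[r]=[a][b]$, so $[r]$ does lie in the square and no arrow from $v_1$ to $v_2$ exists. The main obstacle is the forward direction: thinness extracts $m_i$ uniquely from each $[P_i]$, but one must ensure that the intermediate vertex $w_i$ is not an endpoint of $r$, and this is exactly where the c-value estimate (Lemma~\ref{lem:c-values-compose}) combined with positivity of c-values for nonconstant cycles (Remark~\ref{rem:cycle-cant-be-constant}) is essential.
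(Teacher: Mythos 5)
Your proof is correct, and it follows the same conceptual route as the paper's one-sentence argument, namely translating ``arrow from $v_1$ to $v_2$ in the Gabriel quiver'' into the condition that the minimal class $[r]$ does not lie in the square of the ideal of nonconstant boundary paths, and then identifying that with the arrow-defining property. The paper's own proof merely asserts this equivalence; you supply the content it omits. Two points are genuinely nontrivial and you handle them cleanly: the reduction via the basis $\{[(xy)^m r]\}_{m\ge 0}$ of $e_{v_2}B_Qe_{v_1}$ (using Corollary~\ref{xy-is-c-on-boundary}) to isolate some product $[a_i][b_i]$ with $m_i=0$, and the exclusion of factorizations through the endpoints, where Lemma~\ref{lem:c-values-compose} together with Remark~\ref{rem:cycle-cant-be-constant} forces $m_i\ge1$. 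One small elision worth noting: when you conclude that $P_i\sim r$ passes through $w_i$, contradicting arrow-definition, you implicitly need $P_i$ to be cycleless so that the notion of ``direct'' applies to it; this again follows from Remark~\ref{rem:cycle-cant-be-constant} and Lemma~\ref{lem:c-values-compose} since $C(P_i)=0$, so it is not a gap, but it deserves a sentence.
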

\begin{proof}
	The path $p$ gives an arrow of the boundary algebra if and only if no equivalent path factors through any boundary vertices other than its start and end vertices. This is precisely the definition of an arrow-defining path.
\end{proof}

We now relate arrow-defining paths and minimal leftmost and rightmost paths.

\begin{prop}\label{thm:direct-pair-iff-achinko}
    Let $v_1$ and $v_2$ be distinct boundary vertices of a thin dimer model $Q$. The following are equivalent.
	\begin{enumerate}
		\item\label{h1} There is an arrow-defining path from $v_1$ to $v_2$.
		\item\label{h2} The minimal leftmost and rightmost paths from $v_1$ to $v_2$ are both direct.
	\end{enumerate}
\end{prop}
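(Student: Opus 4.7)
The plan is to prove the two implications separately. The forward direction is a direct consequence of the uniqueness of the minimal equivalence class, while the reverse direction requires a topological argument about the sub-region cut out by the minimal leftmost and rightmost paths.

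For \eqref{h1}$\Rightarrow$\eqref{h2}, suppose $p$ is an arrow-defining path from $v_1$ to $v_2$. The paper already notes (immediately after Definition~\ref{def:arrow-defining-paths}) that arrow-defining paths are minimal, so $[p]$ is the unique minimal equivalence class from $v_1$ to $v_2$. By Corollary~\ref{cor:leftmost-unique}, this class contains both the minimal leftmost path $L$ and the minimal rightmost path $R$. Since $p$ is arrow-defining, every path in $[p]$ is direct, and so in particular $L$ and $R$ are direct.

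For \eqref{h2}$\Rightarrow$\eqref{h1}, assume the minimal leftmost path $L$ and minimal rightmost path $R$ from $v_1$ to $v_2$ are both direct. I want to show that $L$ is arrow-defining, so I take any path $p$ with $[p]=[L]$ and argue that $p$ is direct. First, $p$ is minimal, since minimality is determined by the equivalence class: by thinness together with the uniqueness of the c-value, $[p]=[L]$ forces $C(p)=C(L)=0$. Second, Proposition~\ref{thm:leftmost-c-valueZ} together with its symmetric counterpart (obtained by swapping clockwise and counter-clockwise orientations) yields $L(L)\subseteq L(p)\subseteq L(R)$, so $p$ lies in the closed sub-region $\overline\Omega\subseteq\Sigma$ bounded by $L\cup R$. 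Third, since $L$ and $R$ are direct, each of their interior vertices lies in the interior of $\Sigma$, so $\overline\Omega$ meets $\partial\Sigma$ only at $v_1$ and $v_2$. Therefore any boundary vertex of $Q$ visited by $p$ must equal $v_1$ or $v_2$, and so $p$ is direct.

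The main obstacle will be making the geometric claim precise, particularly when $L$ and $R$ share interior arrows or vertices, in which case $\overline\Omega$ may be a union of sub-disks glued at pinch points rather than a single disk. I plan to handle this by invoking Theorem~\ref{thm:submodel}: let $Q'$ be the dimer submodel whose faces are those of $Q$ lying in $\overline\Omega$. Then $Q'$ is itself thin, equivalence in $Q'$ coincides with equivalence in $Q$, and the boundary vertices of $Q'$ are precisely the vertices along $L\cup R$; the directness of $L$ and $R$ then forces each of these (other than $v_1,v_2$) to be an interior vertex of $Q$, while any vertex in the interior of $\Omega$ is also interior in $\Sigma$. The face-containment inequalities above guarantee that each minimal path of $Q$ from $v_1$ to $v_2$ uses only arrows of $Q'$, so the conclusion follows.
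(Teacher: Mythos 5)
Your proof is correct and follows essentially the same strategy as the paper's: the forward direction uses that arrow-defining implies minimal with every equivalent path direct, and the reverse direction uses Proposition~\ref{thm:leftmost-c-valueZ} and its dual to pin any minimal path between $L$ and $R$ and hence away from other boundary vertices. Your extra care regarding pinch points via Theorem~\ref{thm:submodel} is a sound refinement of a detail the paper passes over quickly, but it does not change the underlying argument.
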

\begin{proof}
	If $p$ is an arrow-defining path from $v_1$ to $v_2$, then $p$ is minimal and every path in its equivalence class is direct. In particular, the minimal leftmost and rightmost paths are direct, showing that~\eqref{h1}$\implies$~\eqref{h2}. On the other hand, suppose that the minimal leftmost path $p_l$ and the minimal rightmost path $p_r$ are both direct and let $p$ be any minimal path from $v_1$ to $v_2$. Proposition~\ref{thm:leftmost-c-valueZ} shows that $p$ is to the left of $p_r$, and the dual of Proposition~\ref{thm:leftmost-c-valueZ} shows that $p$ is to the right of $p_l$.
	Hence, the path $p$ lies in the area bounded by $p_l$ and $p_r$. In particular, $p$ does not contain any boundary vertices aside from $v_1$ and $v_2$, so $p$ is direct. Since this holds for any minimal path from $v_1$ to $v_2$, we have shown that any minimal path from $v_1$ to $v_2$ is arrow-defining. This ends the proof.
\end{proof}

A priori, checking if a path is arrow-defining requires computing every vertex through which a minimal path may factor; Proposition~\ref{thm:direct-pair-iff-achinko} argues that it is possible to do this by only looking at the leftmost and rightmost minimal paths. 
In the following, we will develop tools to compute rightmost and leftmost paths, which will in turn let us compute arrows of the boundary algebra $B_Q$ via Lemma~\ref{lem:arrow-defining-defines-arrow}.

\subsection{Relation Numbers of Paths}

We use the structure of the arrows $x_i$ and $y_i$ between adjacent boundary vertices to define the relation numbers of a minimal path, which compare it to a composition of $x$'s or a composition of $y$'s.

\begin{defn}\label{defn:reach}
    Let $p$ be a boundary path. Let $\reach_x(p)$ be the number in $\{0,\dots,n-1\}$ which is equivalent to $h(p)-t(p)$ modulo $n$. Similarly, let $\reach_y(p)$ be the number in $\{0,\dots,n-1\}$ which is equivalent to $t(p)-h(p)$ modulo $n$. 
\end{defn}

Note that $y_{t(p)-1}^{\reach_y(p)}$ and $x_{t(p)}^{\reach_x(p)}$ are paths from $t(p)$ to $h(p)$ and $\reach_x(p)= n-\reach_y(p)$.

\begin{defn}
	Let $p$ be a nonconstant boundary path in $Q$. Define the \emph{right relation number ${\Y}(p)$ of $p$} as ${\Y}(p):=C(y_{t(p)-1}^{\reach_y(p)})-C(p)$. If ${\Y}(p)\geq0$, then ${\Y}(p)$ is the unique integer such that $[p(xy)^{{\Y}(p)}]=[y_{t(p)-1}^{\reach_y(p)}]$; on the other hand, if ${\Y}(p)\leq0$, then ${\Y}(p)$ is the unique integer such that $[p]=[y_{t(p)-1}^{\reach_y(p)}(xy)^{-{\Y}(p)}]$. Similarly, define the \emph{left relation number ${\X}(p)$ of $p$} to be ${\X}(p):=C(x_{t(p)}^{\reach_x(p)})-C(p)$.
\end{defn}

Intuitively, ${\Y}(p)$ measures how far the path $p$ is from being equivalent to a composition of $y$'s.
When $p$ is minimal, we may move and cancel $x$'s and $y$'s from the equations
$[p(xy)^{{\Y}(p)}]=[y_{t(p)-1}^{\reach_y(p)}]$
and $[p(xy)^{{\X}(p)}]=[x_{t(p)}^{\reach_x(p)}]$
to get more relations of the dimer algebra.

\begin{prop}\label{thm:achinko-relations-from-rep}\label{thm:cyc-set}
	Suppose $p$ is a minimal boundary path. Then 
	\begin{enumerate}
		\item\label{X1} ${\Y}(p)<\reach_y(p)$ and  $[y_{t(p)-1-m}^{\reach_y(p)-{\Y}(p)}]=[x^{m}px^{{\Y}(p)-m}]$ holds in $A_Q$ for any $0\leq m\leq {\Y}(p)$, and
		\item\label{X2} ${\X}(p)<\reach_x(p)$ and 
			$[x_{t(p)+m}^{\reach_x(p)-{\X}(p)}]=[y^mpy^{{\X}(p)-m}]$ holds in $A_Q$ for any $0\leq m\leq {\X}(p)$.
	\end{enumerate}
\end{prop}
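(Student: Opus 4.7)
The plan is to first verify the length bound $\Y(p) < \reach_y(p)$, and then prove the main identity by exploiting the centrality of $[xy]$ in the boundary algebra together with cancellativity (Theorem~\ref{thm:cancellative-closure}).

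For the bound $\Y(p) < \reach_y(p)$: since $p$ is minimal, $C(p) = 0$, so $\Y(p) = C(y_{t(p)-1}^{\reach_y(p)}) \geq 0$. Applying the thinness condition (Definition~\ref{defn:algebraic-consistency}) to $y_{t(p)-1}^{\reach_y(p)}$, there is a minimal path $r$ from $t(p)$ to $h(p)$ with $[y_{t(p)-1}^{\reach_y(p)}] = [(xy)^{\Y(p)} r]$. The defining relations preserve path length, so equivalent paths have the same length, giving $\reach_y(p) = 2\Y(p) + \ell(r)$. As $p$ is nonconstant, $t(p) \neq h(p)$, so $r$ is nonconstant, $\ell(r) \geq 1$, and $\reach_y(p) \geq 2\Y(p) + 1 > \Y(p)$.

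For the main identity, start from the definitional equality $[p(xy)^{\Y(p)}] = [y_{t(p)-1}^{\reach_y(p)}]$. The central element $[f] \in A_Q$ restricts on the boundary idempotent $e$ to $[xy]$, so $[xy]$ is central in $B_Q$; hence $[p(xy)^{\Y(p)}] = [(xy)^m p (xy)^{\Y(p)-m}]$ for every $0 \leq m \leq \Y(p)$. Using Lemma~\ref{x-y-paths-thin} and iterated face-path equivalences $[x_i y_i] = [y_{i-1} x_{i-1}]$, we rewrite the cycle $(xy)^m$ at $t(p)$ as $[y_{t(p)-1}^m x_{t(p)-m}^m]$, and the cycle $(xy)^{\Y(p)-m}$ at $h(p)$ as $[x_{h(p)}^{\Y(p)-m} y_{h(p)+\Y(p)-m-1}^{\Y(p)-m}]$. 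The left-hand side then becomes
\[
[\, y_{t(p)-1}^m \cdot x_{t(p)-m}^m\, p\, x_{h(p)}^{\Y(p)-m} \cdot y_{h(p)+\Y(p)-m-1}^{\Y(p)-m}\,].
\]
On the right, decompose $y_{t(p)-1}^{\reach_y(p)} = y_{t(p)-1}^m \cdot y_{t(p)-1-m}^{\reach_y(p)-\Y(p)} \cdot y_{h(p)+\Y(p)-m-1}^{\Y(p)-m}$ into three consecutive $y$-segments whose intermediate vertices are $t(p)-m$ and $h(p)+\Y(p)-m$. Cancellativity (Theorem~\ref{thm:cancellative-closure}) then allows us to cancel $y_{t(p)-1}^m$ on the left and $y_{h(p)+\Y(p)-m-1}^{\Y(p)-m}$ on the right, yielding $[x^m p x^{\Y(p)-m}] = [y_{t(p)-1-m}^{\reach_y(p)-\Y(p)}]$.

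Part~\eqref{X2} follows by a fully dual argument, starting instead from $[p(xy)^{\X(p)}] = [x_{t(p)}^{\reach_x(p)}]$ and interchanging the roles of $x$ and $y$ throughout. The main obstacle in each case is the bookkeeping for the telescoping $x$- and $y$-paths modulo $n$: once the splitting of $y_{t(p)-1}^{\reach_y(p)}$ is aligned with the expansions of $(xy)^m$ and $(xy)^{\Y(p)-m}$ at the appropriate vertices, cancellativity delivers the identity with no further work.
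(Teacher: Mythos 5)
The main identity in your argument is essentially the paper's: you expand $[(xy)^m]$ at $t(p)$ as $[y^m x^m]$, expand $[(xy)^{\Y(p)-m}]$ at $h(p)$ as $[x^{\Y(p)-m}y^{\Y(p)-m}]$, and cancel $y$'s from both ends. That part is correct and matches the paper up to index bookkeeping.

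The gap is in your proof of the bound $\Y(p)<\reach_y(p)$. You assert that ``the defining relations preserve path length,'' but this is false for a general thin dimer model. The relations of $A_Q$ are $[R_\alpha^{cc}]-[R_\alpha^{cl}]$, and the two faces containing an internal arrow $\alpha$ need not have the same number of arrows, so the two return paths can have different lengths; $A_Q$ is not graded by path length. Compounding this, even if some length grading existed, the equality $\ell\bigl(y_{t(p)-1}^{\reach_y(p)}\bigr)=\reach_y(p)$ and the count $\ell\bigl((xy)^{\Y(p)}r\bigr)=2\Y(p)+\ell(r)$ both implicitly treat every $x_i$ and $y_i$ as a single arrow. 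In fact for each $i$ exactly one of $x_i,y_i$ is the boundary arrow $\alpha_i$ and the other is its return path $R_{\alpha_i}$, which is typically longer. So the equation $\reach_y(p)=2\Y(p)+\ell(r)$ does not hold, and the inequality does not follow. The paper instead obtains the bound from cancellativity alone: if $\Y(p)\geq\reach_y(p)$, then writing $[p(xy)^{\Y(p)}]=[px^{\Y(p)}y^{\Y(p)}]$ and cancelling $\reach_y(p)$ copies of $y$ from the right of $[p(xy)^{\Y(p)}]=[y_{t(p)-1}^{\reach_y(p)}]$ gives $[px^{\Y(p)}y^{\Y(p)-\reach_y(p)}]=[e_{t(p)}]$, equating a nonconstant path with a constant one, which is impossible. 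You should replace your length argument with this cancellativity argument.
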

\begin{proof}
	We only show~\eqref{X1}, since~\eqref{X2} is symmetric.
	If ${\Y}(p)\geq\reach_y(p)$, then we can cancel (using that $A_Q$ is cancellative by Theorem~\ref{thm:cancellative-closure}) the $y$'s from the right side of the equation $[p(xy)^{{\Y}(p)}]=[y_{t(p)-1}^{\reach_y(p)}]$ to get $[px^{{\Y}(p)}y^{{\Y}(p)-\reach_y(p)}]=[e_{t(p)}]$, which is a contradiction as a constant path may not be equivalent to a nonconstant path. Hence, ${\Y}(p)<\reach_y(p)$.
	Fix $m$ such that $0\leq m\leq {\Y}(p)$. We may write
	\begin{align*}
		[y_{t(p)-1}^{\reach_y(p)}]=[p(xy)^{{\Y}(p)}]&=[(xy)^{m}p(xy)^{{\Y}(p)-m}]=[y^{m}x^{m}px^{{\Y}(p)-m}y^{{\Y}(p)-m}],
	\end{align*}
	where the second equality follows because $[xy]$ is in the center of $A_Q$ and the third follows because $[xy]=[yx]$. Since ${\Y}(p)<\reach_y(p)$, we may cancel out all $y$'s from the left and right sides of the right hand term. The resulting equation is $[y_{t(p)-1-m}^{\reach_y(p)-{\Y}(p)}]=[x^{m}px^{{\Y}(p)-m}]$. 
\end{proof}

We call the relations mentioned in~\eqref{X1} and~\eqref{X2} of Proposition~\ref{thm:achinko-relations-from-rep} the two \textit{cyclic sets of relations} of $p$. We wait until Example~\ref{ex:2} to give an example of these relations.

\section{Calculating Arrows and Relation Numbers}
\label{sec:carn}

We now show that the arrow-defining paths of a dimer model and their relation numbers may be calculated from the associated strand diagram, decorated permutation, or Grassmann necklace. To do this, we will need to understand rightmost paths in terms of strands.

As before, let $Q$ be a thin dimer model and number the boundary vertices of $Q$ from $1$ to $n$, increasing clockwise. We use the term \emph{strand-vertices} to refer to the start and end points of the strands of $Q$. Number the strand-vertices from $1$ to $n$ increasing clockwise, such that strand-vertex $i$ is immediately clockwise of the vertex $i\in Q$ and counter-clockwise of $i+1\in Q$.
See Figure~\ref{fig:ex2}.
	Given a boundary vertex $v$ of the quiver $Q$, we write $v^{cl}:=v$ and $v^{cc}:=v-1$ for the strand-vertices immediately clockwise and counter-clockwise of $v$, respectively.

\begin{defn}\label{defn:cyc-int}
If $i$ and $j$ are distinct elements of $[n]$, then we say that the \emph{(open) clockwise interval $(i,j)$ from $i$ to $j$} is the set $\{i+1,i+2,\dots,j-1\}$, where addition is modulo $n$. We also use the notation $[i,j)$, $(i,j]$, and $[i,j]$ for clockwise intervals which include either or both endpoints.
In particular, we frequently use this notation to refer to clockwise intervals of boundary vertices of $Q$ or strand-vertices of the strand diagram.
\end{defn}

\subsection{Understanding Rightmost Paths through Strand Diagrams}

We first obtain results about rightmost (and leftmost) paths of $Q$ from the information of the strand diagram.

\begin{defn}
	Let $v_1$ and $v_2$ be boundary vertices of $Q$.
	A \textit{clockwise strand} between $v_2$ and $v_1$ is a strand from strand-vertex $w_1$ to strand-vertex $w_2$, such that $v_2^{cl},\ w_1,\ w_2,\ v_1^{cc}$ is a clockwise ordering (we may have $v_2^{cl}=w_1$ and/or $w_2=v_1^{cc}$).
	We write $\CL(v_2,v_1)$ for the set of clockwise strands between $v_2$ and $v_1$. Similarly, we say that a \emph{counter-clockwise strand} between $v_2$ and $v_1$ is a strand from $w_1$ to $w_2$, where $v_1^{cl},\ w_2,\ w_1,\ v_2^{cc}$ is a clockwise ordering. We write $\CC(v_2,v_1)$ for the set of counter-clockwise strands between $v_2$ and $v_1$.
\end{defn}

\begin{defn}\label{defn:need}
	Suppose $v_2+1\neq v_1$.
	Let $S$ be a subset of $\CL(v_2,v_1)$. {Define $R^\cup(S)$ to be the connected component of the complement of the strands $S$ in the disk which contains the boundary vertices $v_1$ and $v_2$. Define the \emph{extremal substrands of $S$} to be the sub-paths of strands of $S$ which run along the boundary of $R^\cup(S)$. See Figure~\ref{fig:rs}. If the extremal substrands are connected, we say that $S$ is \emph{extremally connected}. If, further, the union of the extremal substrands forms a path from $v_2^{cl}$ to $v_1^{cc}$, then we say that $S$ is \emph{$(v_1,v_2)$-extremally connected}, as in the left of Figure~\ref{fig:rs}. If $v_2+1=v_1$, then $\CL(v_2,v_1)$ is empty and we consider it to be $(v_1,v_2)$-extremally connected.} The right of Figure~\ref{fig:rs} gives a non-$(v_1,v_2)$-extremally connected set of strands of $\CL(v_2,v_1)$.

	Suppose $S$ is $(v_1,v_2)$-extremally connected.
	The extremal substrands of $S$ may be pasted together to form a path $z_S$ from $v_2$ to $v_1$, viewed as either a composition of strand segments or a path of $Q$. By definition, if some internal arrow $\alpha$ is in $z_S$, then it must either be preceded by $\alpha'$ or followed by $\alpha''$ in $z_S$, where $\alpha'\alpha$ and $\alpha\alpha''$ are subpaths of the clockwise face-path containing $\alpha$. Hence, the set of arrows of $Q$ which are not in $z_S$ but share a clockwise face with arrows of $z_S$ paste together to form a rightmost path of $Q$ from $v_1$ to $v_2$, which we refer to as the \emph{rightmost return path $r_S$ of $S$}.
	See the left of Figure~\ref{fig:rs} for an example. 

	If $S$ is a collection of strands of $\CL(v_2,v_1)$ which is not $(v_1,v_2)$-extremally connected, then one may break $S$ into its maximal extremally connected components, draw a rightmost return path for each one, and connect them with $y_i$'s to obtain the \emph{rightmost return path} $r_S$ from $v_1$ to $v_2$. See the right of Figure~\ref{fig:rs}.
	\begin{figure}[H]
		\centering
		\def\svgscale{0.21}
		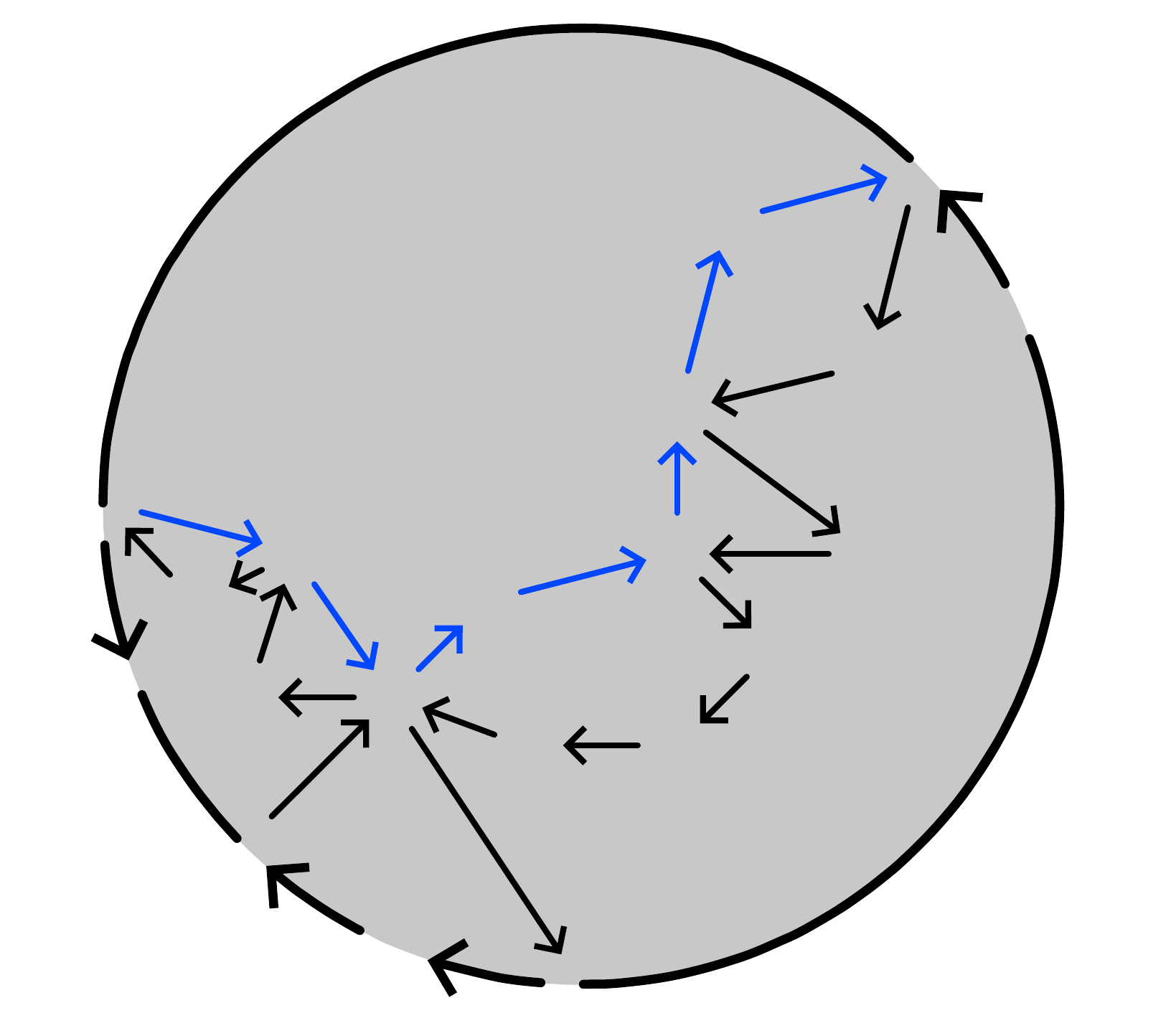
		\hspace{1cm}
		\def\svgscale{0.21}
\begingroup%
  \makeatletter%
  \providecommand\color[2][]{%
    \errmessage{(Inkscape) Color is used for the text in Inkscape, but the package 'color.sty' is not loaded}%
    \renewcommand\color[2][]{}%
  }%
  \providecommand\transparent[1]{%
    \errmessage{(Inkscape) Transparency is used (non-zero) for the text in Inkscape, but the package 'transparent.sty' is not loaded}%
    \renewcommand\transparent[1]{}%
  }%
  \providecommand\rotatebox[2]{#2}%
  \newcommand*\fsize{\dimexpr\f@size pt\relax}%
  \newcommand*\lineheight[1]{\fontsize{\fsize}{#1\fsize}\selectfont}%
  \ifx\svgwidth\undefined%
    \setlength{\unitlength}{786.64501953bp}%
    \ifx\svgscale\undefined%
      \relax%
    \else%
      \setlength{\unitlength}{\unitlength * \real{\svgscale}}%
    \fi%
  \else%
    \setlength{\unitlength}{\svgwidth}%
  \fi%
  \global\let\svgwidth\undefined%
  \global\let\svgscale\undefined%
  \makeatother%
  \begin{picture}(1,0.8611279)%
    \lineheight{1}%
    \setlength\tabcolsep{0pt}%
    \put(0,0){\includegraphics[width=\unitlength,page=1]{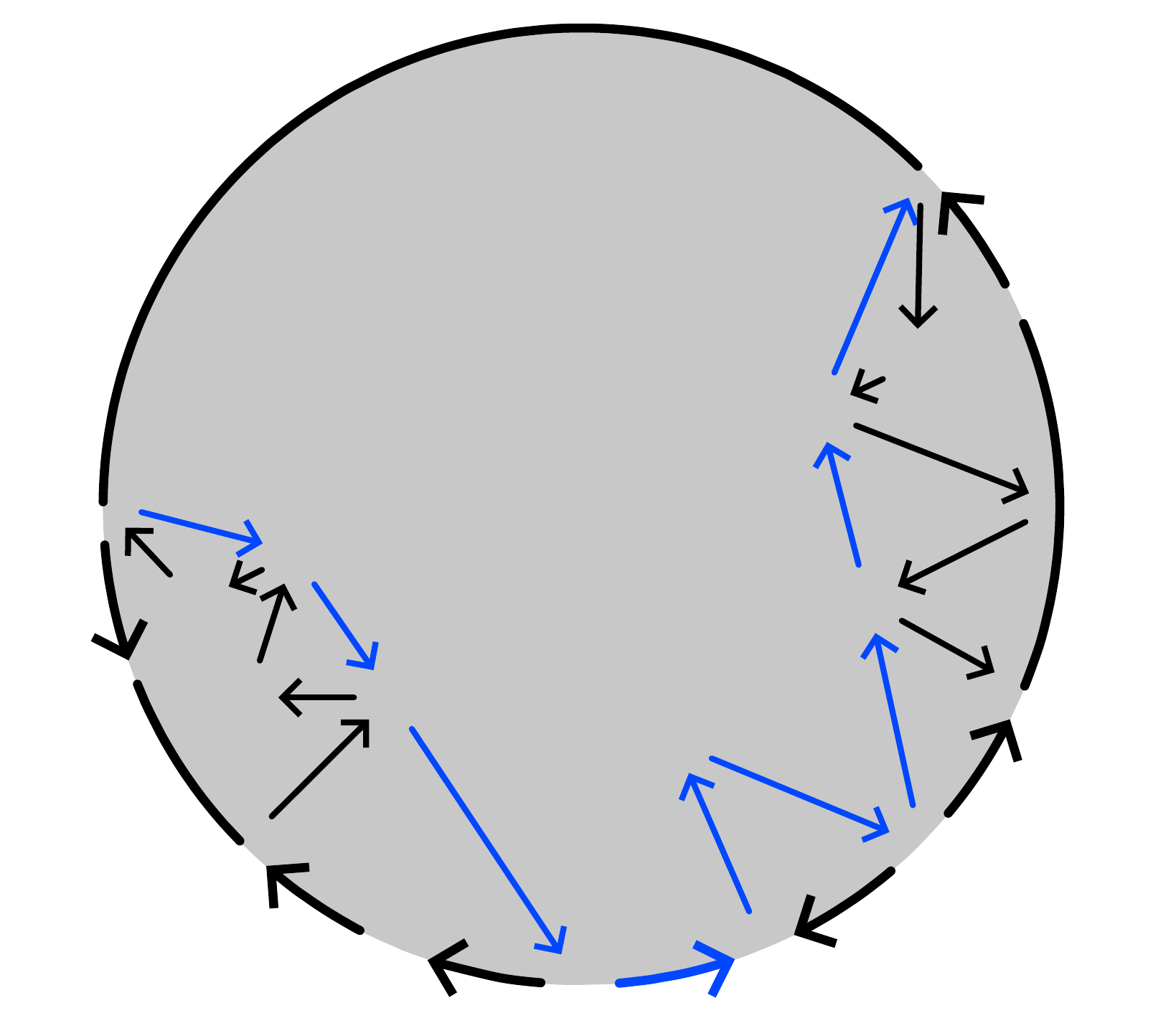}}%
    \put(0.00980493,0.40965298){\color[rgb]{0.09019608,0.08627451,0.07058824}\makebox(0,0)[lt]{\lineheight{1.25}\smash{\begin{tabular}[t]{l}$v_1$\end{tabular}}}}%
    \put(0.80584633,0.71064605){\color[rgb]{0.09019608,0.08627451,0.07058824}\makebox(0,0)[lt]{\lineheight{1.25}\smash{\begin{tabular}[t]{l}$v_2$\end{tabular}}}}%
    \put(0.79006157,0.49815173){\color[rgb]{0.09019608,0.08627451,0.07058824}\makebox(0,0)[lt]{\lineheight{1.25}\smash{\begin{tabular}[t]{l}$z_1$\end{tabular}}}}%
    \put(0.30637962,0.17194119){\color[rgb]{0.09019608,0.08627451,0.07058824}\makebox(0,0)[lt]{\lineheight{1.25}\smash{\begin{tabular}[t]{l}$z_2$\end{tabular}}}}%
    \put(0.16334941,0.30301911){\color[rgb]{0.09019608,0.08627451,0.07058824}\makebox(0,0)[lt]{\lineheight{1.25}\smash{\begin{tabular}[t]{l}$z_3$\end{tabular}}}}%
    \put(0.31336625,0.34964078){\color[rgb]{0.09019608,0.08627451,0.07058824}\makebox(0,0)[lt]{\lineheight{1.25}\smash{\begin{tabular}[t]{l}$\textcolor{blue}{r_T}$\end{tabular}}}}%
    \put(0,0){\includegraphics[width=\unitlength,page=2]{ext2.pdf}}%
  \end{picture}%
\endgroup%

		\caption{Shown are two sets of strands $S$ (left) and $T$ (right) of $\CL(v_2,v_1)$ and their corresponding paths $r_S$ and $r_T$, given by the blue arrows. The paths $z_S$ and $z_T$ consist of the black arrows (as paths in the dimer model $Q$) and are also drawn as the composition of the non-dotted segments of strands. Note that, on the left, $z_S$ as a path in $Q$ contains a face-path.}
		\label{fig:rs}
	\end{figure}
\end{defn}

We have defined a way to start with a set of strands $S\subseteq\CL(v_2,v_1)$ and obtain a rightmost path $r_S$. We now prepare to go in the other direction by realizing any rightmost path $p$ as $r_S$, where $S$ is a set of strands in $\CL(h(p),t(p))$.

\begin{lemma}\label{lem:faces-right}
	Let $p$ be a cycleless rightmost boundary path. Let $F$ be a clockwise face of $Q$ in $R(p)$. If $v_1$ and $v_2$ are distinct vertices of $p$ which belong to $F$, then the segment of $p$ from $v_1$ to $v_2$ is the subpath of the face-path $F$ from $v_1$ to $v_2$.
\end{lemma}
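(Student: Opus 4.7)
The plan is to show $p' = q'$, where $p'$ is the segment of $p$ from $v_1$ to $v_2$ and $q'$ is the subpath of the face-path of $F$ from $v_1$ to $v_2$ (traversed in the direction in which $F$ is on the right). First I would observe that $p'$ inherits rightmost-ness from $p$: any basic right-morph of $p'$, swapping a subpath $R_\alpha^{\textup{cl}}$ for $R_\alpha^{\textup{cc}}$, extends verbatim to a basic right-morph of $p$, contradicting rightmost-ness of $p$. Meanwhile, $q'$ is minimal by Lemma~\ref{lem:subpath-of-facepath-is-thin}, as $v_1 \neq v_2$ forces $q'$ to be a proper subpath of the face-path of $F$.

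Next I would argue that $[p'] = [q']$. The path $p'$ is cycleless as a subpath of the cycleless $p$. In a thin dimer model, a cycleless path is minimal (this follows from the cancellative structure guaranteed by Theorem~\ref{thm:cancellative-closure}, or alternatively from results in~\cite{BS}), so $p'$ is minimal; by the thinness condition (Definition~\ref{defn:algebraic-consistency}), all minimal paths from $v_1$ to $v_2$ lie in the same equivalence class, giving $[p'] = [q']$. Now the geometric punchline: since $F \in R(p')$ and $q'$ traces out the boundary of $F$ on the side facing $p'$, the path $q'$ is sandwiched between $p'$ and $F$, so $R(q') \subseteq R(p')$. Conversely, Proposition~\ref{thm:leftmost-c-valueZ} applied to the equivalent minimal $q'$ and rightmost $p'$ yields $R(p') \subseteq R(q')$. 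Combining, $R(p') = R(q')$, and since two cycleless paths in a disk with the same endpoints and same right-region must coincide, $p' = q'$.

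The main obstacle will be the step that cycleless implies minimal, as it is a slightly delicate fact about the ambient thin dimer model that might not be immediate from the results cited so far. As a backup, one could argue directly: if $p' \neq q'$, let $w$ be the first vertex at which they diverge; using that the faces around any internal vertex alternate between clockwise and counter-clockwise (a basic consequence of the dimer model axioms), the face immediately to the right of $p'$ at $w$ is a clockwise face $F^*$. Iterating this structural observation, $p'$ must traverse all but one arrow of $F^*$'s boundary, producing a subpath of the form $R_\alpha^{\textup{cl}}$ and contradicting $p'$'s rightmost-ness.
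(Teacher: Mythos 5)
Your argument shares the core ideas of the paper's proof — comparing the rightmost segment $p'$ with the minimal face-boundary subpath $q'$ (the paper's $\sigma$) via the left/right order and Proposition~\ref{thm:leftmost-c-valueZ} — but it omits the key structural move of passing to a dimer \emph{submodel}, and this omission produces genuine gaps.

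The regions $R(\cdot)$, $L(\cdot)$, the order ``to the left of / to the right of,'' and Proposition~\ref{thm:leftmost-c-valueZ} are all stated only for cycleless \emph{boundary} paths, meaning paths starting and ending at boundary vertices. In the lemma, $v_1$ and $v_2$ are merely vertices of $p$ that lie on $F$ — they may be interior vertices — so $p'$ and $q'$ are in general not boundary paths of $Q$. Thus $R(p')$ and $R(q')$ are not defined, and Proposition~\ref{thm:leftmost-c-valueZ} does not apply. The paper's proof fixes this: after reducing WLOG to the case where $p$ passes through no vertex of $\sigma$ other than $v_1,v_2$, it observes that $\sigma(p')^{-1}$ is a simple closed loop bounding a disk, hence a dimer submodel $Q'$ (Definition~\ref{defn:disk-submodel}, Theorem~\ref{thm:submodel}). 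In $Q'$, the segments $p'$ and $\sigma$ \emph{are} boundary paths, rightmost-ness and minimality are preserved, and the region argument becomes legitimate.

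Separately, the assertion ``in a thin dimer model, a cycleless path is minimal'' is not a consequence of Theorem~\ref{thm:cancellative-closure} (which only asserts thin $\iff$ cancellative) and is not available to you in the form cited; your fallback sketch (that divergence from $q'$ forces $p'$ to traverse a full return path) is not fleshed out enough to verify — the claim that $p'$ must cover all but one arrow of some $F^*$ requires a careful argument. Notably, the paper never needs $p'$ to be minimal: in $Q'$, Proposition~\ref{thm:leftmost-c-valueZ} applied to the minimal $\sigma$ and rightmost $p'$ shows $\sigma$ is to the left of $p'$, while the geometric fact that $F\in R(p)$ forces $\sigma$ to be to the right of $p'$ in $Q'$, and these coexist only when $p'=\sigma$. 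This also reveals that your intermediate step $[p']=[q']$ is doing no work: Proposition~\ref{thm:leftmost-c-valueZ} requires only that $q'$ be minimal and $p'$ rightmost with matching endpoints, not that they be equivalent.
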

\begin{proof}
	Let $\sigma$ be the subpath of the face $F$ from $v_1$ to $v_2$.
	It suffices to prove the case when $p$ does not pass through any vertex in the segment $\sigma$ other than $v_1$ and $v_2$.

	Let $p'$ be the segment of $p$ from $v_1$ to $v_2$. Since $p$ passes through no other vertex of $\sigma$, the path $\sigma(p')^{-1}$ must be a simple loop, and hence bound a disk which induces a dimer submodel $Q'$ of $Q$. The path $\sigma$ is minimal in $Q'$ by Lemma~\ref{lem:subpath-of-facepath-is-thin}, and $p'$ is rightmost, hence Proposition~\ref{thm:leftmost-c-valueZ} shows that $\sigma$ is to the left of $p'$ in $Q'$. On the other hand, since $F$ is to the right of $p$ in $Q$, the path $\sigma$ must be to the right of $p'$ in $Q'$. Then $\sigma$ is both to the left and to the right of $p'$ in $Q'$, so we must have $\sigma=p'$ (in $Q'$ and in $Q$).
\end{proof}

\begin{defn}
	A cycleless boundary path $p$ of $Q$ is \emph{right-direct} if $p$ factors through no vertex of $(h(p),t(p))$. Symmetrically, $p$ is \emph{left-direct} if it factors through no vertex of $(t(p),h(p))$.
\end{defn}
A boundary path is direct if and only if it is both left-direct and right-direct.
We say that a \emph{right-direct component} of a path $p$ is a subpath $p'$ of $p$ which starts and ends at vertices of $(h(p),t(p))$, but does not otherwise use any arrows of $(h(p),t(p))$. Then any cycleless boundary path $p$ factors uniquely as $p=p_1\dots p_m$, where $p_i$ is the  set of right-direct components of $p$. One my similarly factor a boundary path as a composition of its \emph{left-direct components}.

\begin{defn}\label{defn:rss}
	Let $p$ be a {right-direct} rightmost path such that $h(p)+1\neq t(p)$.
	We say that the \textit{right-supporting faces} of $p$ are those clockwise faces in $R(p)$ which share a vertex with $p$.

	Let $F$ be a right-supporting face of $p$. By Lemma~\ref{lem:faces-right}, the arrows of $F$ which are not in $p$ form a path $\gamma_1\dots\gamma_m$, where $m\geq1$. {If $m=1$, then $p$ contains $\sigma:=\delta_1\dots\delta_{m'}$ such that $\gamma_1\sigma$ is a clockwise face-path by Lemma~\ref{lem:faces-right} -- since $p$ is rightmost, it must be true that $\gamma_1$ is a boundary arrow. Since $\gamma_1$ is part of a clockwise face, it is the arrow $x_j$ from $j$ to $j+1$ for some $j$. Then $\sigma$ is the path $y_j$. Since $p$ is right-direct, it must be the case that $\sigma=p$; this contradicts the assumption that $h(p)+1\neq t(p)$. This shows that we must have $m\geq2$.}

	For any $i\in[m-1]$, we say that the path $\gamma_i\gamma_{i+1}$ is a \emph{right-supporting pair}, or \emph{RSP}, of $p$. 
	If $i>1$, then $\gamma_i\gamma_{i+1}$ is an \emph{initial RSP}. If $i<m-1$, then $\gamma_i\gamma_{i+1}$ is a \emph{terminal RSP}.
	If there is a boundary arrow $\gamma$ in $R(p)$ beginning at $t(p)$,  then we consider $\gamma$ to be a terminal RSP. This occurs in the left of Figure~\ref{fig:rs}. Dually, if there is a boundary arrow $\delta$ in $R(p)$ ending at $h(p)$, then we consider $\delta$ to be an initial RSP.
	An RSP which is neither initial nor terminal is a \emph{middle RSP}.
	{Note that an RSP is middle if and only if $m=2$ (and, hence, $i=1$).}
	A \emph{right-supporting strand} of $p$ is a strand $z$ containing an RSP of $p$.

	{If $\gamma_i\gamma_{i+1}$ is a non-terminal RSP of $p$ contained in a right-supporting strand $z$, then the two arrows $\delta\delta'$ following $\gamma_{i+1}$ in $z$ lie in a right-supporting face $F'$ of $p$, and $t(\delta)$ is a vertex of $p$. Then by Lemma~\ref{lem:faces-right}, $\delta\delta'$ is a non-initial RSP of $p$.}
	Symmetrically, if $\gamma_i\gamma_{i+1}$ is a non-initial RSP of $p$ in a right-supporting strand $z$, then the two arrows preceding $\gamma_i$ in $z$ form a non-terminal RSP of $p$.
	We then say that a \emph{right-supporting substrand} of a right-supporting strand $z$ of $p$ is a substrand of $z$ of the form $B_1\dots B_m$, where $B_1$  is an initial RSP, $B_m$ is a terminal RSP, and $B_i$ is a middle RSP for $i\in\{2,\dots,m-1\}$. 

	In Figure~\ref{fig:rs}, the strands $z_i$ form the right-supporting strands of $r_S$ on the left, and $r_T$ on the right, and their non-dotted substrands form the right-supporting substrands.
	See Figure~\ref{fig:rightsups} for an example of a single right-supporting substrand.

	Every right-supporting strand $z$ of $p$ contains a right-supporting substrand. Note that we may have $m=1$, in which case the RSP $B_1$ is both initial and terminal.
\end{defn}

Note that right-supporting substrands are contained entirely within $R(p)$. Intuitively, a right-supporting strand ``hugs $p$'' along its right-supporting substrand before ``detaching from $p$'' by going further into $R(p)$.

	\begin{figure}[H]
		\centering
		\def\svgscale{0.21}
\begingroup%
  \makeatletter%
  \providecommand\color[2][]{%
    \errmessage{(Inkscape) Color is used for the text in Inkscape, but the package 'color.sty' is not loaded}%
    \renewcommand\color[2][]{}%
  }%
  \providecommand\transparent[1]{%
    \errmessage{(Inkscape) Transparency is used (non-zero) for the text in Inkscape, but the package 'transparent.sty' is not loaded}%
    \renewcommand\transparent[1]{}%
  }%
  \providecommand\rotatebox[2]{#2}%
  \newcommand*\fsize{\dimexpr\f@size pt\relax}%
  \newcommand*\lineheight[1]{\fontsize{\fsize}{#1\fsize}\selectfont}%
  \ifx\svgwidth\undefined%
    \setlength{\unitlength}{675.64099121bp}%
    \ifx\svgscale\undefined%
      \relax%
    \else%
      \setlength{\unitlength}{\unitlength * \real{\svgscale}}%
    \fi%
  \else%
    \setlength{\unitlength}{\svgwidth}%
  \fi%
  \global\let\svgwidth\undefined%
  \global\let\svgscale\undefined%
  \makeatother%
  \begin{picture}(1,0.26222657)%
    \lineheight{1}%
    \setlength\tabcolsep{0pt}%
    \put(0,0){\includegraphics[width=\unitlength,page=1]{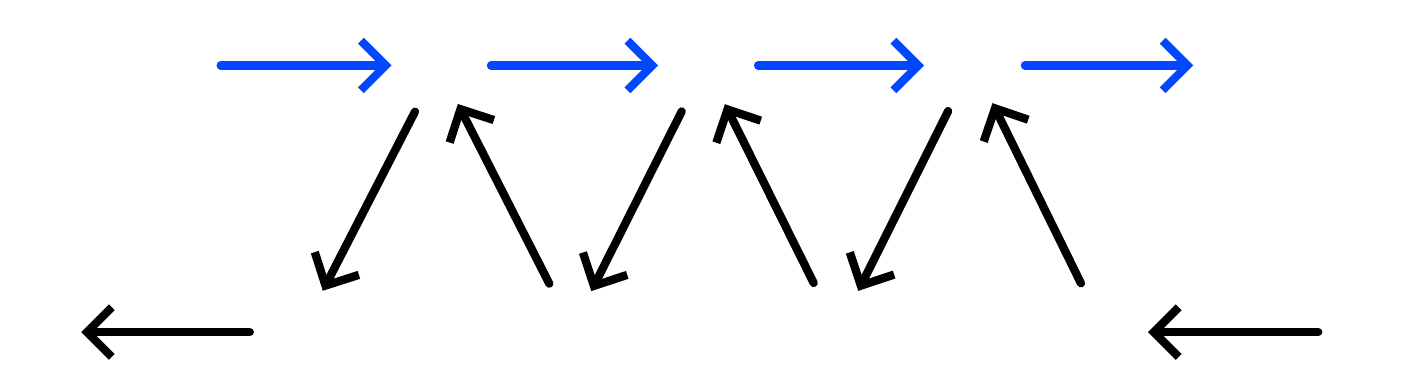}}%
    \put(0.41747023,0.2524227){\color[rgb]{0.09019608,0.08627451,0.07058824}\makebox(0,0)[lt]{\lineheight{1.25}\smash{\begin{tabular}[t]{l}$\textcolor{blue}{p}$\end{tabular}}}}%
    \put(0,0){\includegraphics[width=\unitlength,page=2]{rightsups.pdf}}%
  \end{picture}%
\endgroup%

		\caption{A rightmost path $p$ and a right-supporting substrand.}
		\label{fig:rightsups}
	\end{figure}

If $p$ is a right-direct rightmost path and $h(p)+1=t(p)$, then we must have $p=y_{h(p)}$ and we say that $p$ has no right-supporting strands. If $p$ is an arbitrary rightmost boundary path, we decompose $p=p_1\dots p_m$, where each $p_i$ is a right-direct component of $p$. Then we say that the right-supporting strands of $p$ are the union of the right-supporting strands of $p_i$ for every $i\in[m]$. In this way we may consider the right-supporting strands of an arbitrary rightmost boundary path.

\begin{lemma}\label{lem:rs-contain}
	Let $p$ be a rightmost cycleless boundary path. The right-supporting strands of $p$ are strands of $\CL\big(h(p),t(p)\big)$ which are contained entirely within $R(p)$.
\end{lemma}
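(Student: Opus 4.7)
The plan is to show that $z$ is contained in $R(p)$ and that the two endpoints of $z$ on the disk boundary lie in the arc $[h(p)^{cl}, t(p)^{cc}]$, which together give $z \in \CL(h(p), t(p))$ with $z\subseteq R(p)$.

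I would start by fixing a maximal right-supporting substrand $s = B_1 \cdots B_m$ of $z$. Each $B_i$ consists of non-$p$ arrows in a right-supporting (hence clockwise) face $F_i$, and right-supporting faces lie in $R(p)$ by definition. Hence $s$ sits inside $R(p)$, as noted in the paragraph just before the lemma statement.

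Next I analyze the portion of $z$ outside $s$. I handle the initial end at $B_1$; the terminal end at $B_m$ is symmetric. If $B_1$ is the degenerate boundary-arrow case of an initial RSP (a boundary arrow in $R(p)$ ending at $h(p)$), then $z$ terminates at a strand-vertex in $\{h(p)^{cl}, h(p)^{cc}\}$, and the relevant one lies in the $R(p)$-arc $[h(p)^{cl}, t(p)^{cc}]$. Otherwise $B_1 = \gamma_i\gamma_{i+1}$ is a genuinely internal initial RSP in some face $F_1$ with $i > 1$; then $z$ extends beyond $\gamma_i$ into the face $F^{\mathrm{pre}}$ opposite $\gamma_i$. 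Because $\gamma_i\notin p$, both faces of $\gamma_i$ lie on the same side of $p$, so $F^{\mathrm{pre}}\subseteq R(p)$. By maximality of $s$, together with the symmetric version of the observation preceding the lemma (a non-initial RSP on $z$ is preceded on $z$ by a non-terminal RSP), the pair of arrows of $z$ inside $F^{\mathrm{pre}}$ cannot form another RSP. I then iterate: $z$ moves through a sequence of faces, at each step either reaching a boundary arrow of $R(p)$ and terminating on the $R(p)$-arc, or continuing into yet another face within $R(p)$.

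The main obstacle is the no-crossing step: at each iteration I must rule out the possibility that $z$ crosses an arrow $\beta\in p$ (which would throw $z$ into $L(p)$). Here I would use the rightmostness of $p$ --- in particular, that $p$ contains no subpath of the form $R^{cl}_\alpha$ --- together with the alternating intersection property of strand diagrams, which together constrain the arrangement of the two faces of any prospective crossing arrow $\beta$ and prevent a right-supporting strand from traversing such a $\beta$. Once non-crossing is established at every step, connectedness of $z$ combined with $s\subseteq R(p)$ forces $z\subseteq R(p)$ globally, and both boundary endpoints of $z$ therefore lie in the arc $[h(p)^{cl}, t(p)^{cc}]$ bordering $R(p)$, yielding $z\in \CL(h(p), t(p))$ as required.
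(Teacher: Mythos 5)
Your setup is right: reduce to showing that $z$ is contained in $R(p)$ and that both of its endpoints lie on the boundary arc $[h(p)^{cl},t(p)^{cc}]$ of $R(p)$, and the preliminary observations you make are correct --- the right-supporting substrand sits inside $R(p)$, and when $z$ ``detaches'' at the initial RSP it enters the opposite face $F^{\mathrm{pre}}$ of $\gamma_i$, which is also in $R(p)$ because $\gamma_i\notin p$ forces both faces of $\gamma_i$ onto the same side of $p$.

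The gap is exactly the step you flag as the ``main obstacle,'' and your sketch there does not go through. You propose a purely local, face-by-face iteration: at each face $z$ enters, rule out a crossing with $p$ by invoking rightmostness (no subpath $R^{cl}_\alpha$ in $p$) and the alternating intersection property. But once $z$ has moved several faces away from its right-supporting substrand, the local picture around a candidate crossing arrow $\beta\in p$ is simply not constrained by those facts: rightmostness is a condition on consecutive arrows \emph{of $p$}, and it says nothing that prevents an arbitrary strand segment deep in $R(p)$ from eventually running into an arrow of $p$ and traversing it. What actually forbids the crossing is global and planar, not local, and it involves strands other than $z$ itself. The paper orders \emph{all} right-supporting substrands $\delta_1,\dots,\delta_m$ of $p$ along $p$ and runs a downward induction, proving the stronger statement that if $z_{\delta_j}^+$ (the continuation of $z_{\delta_j}$ past $\delta_j$) meets $p$ at all, its first intersection must be with $p_{\delta_j}^+$. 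The inductive step uses the \emph{neighboring} right-supporting strand $z_{\delta_{j+1}}$ as a topological barrier: if $z_{\delta_j}^+$ tried to reach $p_{\delta_j}$ or $p_{\delta_j}^-$, it would create a bad lens with $z_{\delta_{j+1}}$, a self-intersection, or a prior hit on $p_{\delta_j}^+$. Combining the claim with its symmetric version for $z_\delta^-$ then rules out any crossing. Your single-strand iteration has no access to this interaction with the other right-supporting strands, so the no-crossing step is a genuine hole as written.

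A smaller point: even with $z\subseteq R(p)$ and endpoints on $[h(p)^{cl},t(p)^{cc}]$, membership in $\CL(h(p),t(p))$ additionally requires that $t(z)$ precede $h(z)$ clockwise on that arc, and your writeup does not address the ordering. The paper handles it in its final sentence (arguing $z_\delta^+$ cannot reach a strand-vertex in $[h(p)^{cl},t(z_\delta^-)]$ without a forbidden crossing); this is easy once containment is established, but it should be stated.
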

\begin{proof}
	Since right-supporting strands are defined in terms of right-direct components, it suffices to consider the case when $p$ is right-direct.
	Let $z_\delta$ be a right-supporting strand of $p$ with right-supporting substrand $\delta$. Let $z_\delta^+$ be the subpath of $z_\delta$ (considered as a path in $Q$) after $\delta$ until either $z_\delta$ ends, or has a shared vertex with $p$. Factor $p$ as $p^-_\delta p_\delta p^+_\delta$, where $p_\delta$ is the segment of $p$ which is contained the right-supporting faces containing arrows of $z_\delta$. We claim that, if $z_\delta^+$ intersects with $p$, then its first intersection with $p$ must be with $p^+_\delta$.

	To show this this, number the right-supporting substrands of $p$ as $\delta_1,\dots,\delta_m$
	so that $\delta_1$ begins at $h(p)^{cl}$, and indices increase moving backwards along $p$ until $\delta_m$ ends at $t(p)^{cc}$.
	It is immediate that the claim holds for $\delta_m$, as $z_{\delta_m}$ does not continue after $\delta_m$. Suppose for some $j>1$ that we have shown the claim for $\delta_{j+1}$; we show that it holds for $j$. Note that $\delta_j$ ends with an intersection with $\delta_{j+1}$; see Figure~\ref{fig:JFG}. Since $z_{\delta_{j+1}}^+$ ends with an intersection with $p_{\delta_{j+1}}^+$ or with a strand-vertex in $(h(p)^{cl},t(p)^{cc})$, it is impossible for $z_{\delta_j}^+$ to reach $p_{\delta_j}^-$ or $p_{\delta_j}$ without creating a bad lens with $z_{\delta_{j+1}}$, a self-intersection with $z_{\delta_j}$, or a preliminary intersection with $p_{\delta_j}^+$. See Figure~\ref{fig:JFG}.
	This completes the proof of the claim.
	\begin{figure}[H]
		\centering
		\def\svgscale{0.21}
\begingroup%
  \makeatletter%
  \providecommand\color[2][]{%
    \errmessage{(Inkscape) Color is used for the text in Inkscape, but the package 'color.sty' is not loaded}%
    \renewcommand\color[2][]{}%
  }%
  \providecommand\transparent[1]{%
    \errmessage{(Inkscape) Transparency is used (non-zero) for the text in Inkscape, but the package 'transparent.sty' is not loaded}%
    \renewcommand\transparent[1]{}%
  }%
  \providecommand\rotatebox[2]{#2}%
  \newcommand*\fsize{\dimexpr\f@size pt\relax}%
  \newcommand*\lineheight[1]{\fontsize{\fsize}{#1\fsize}\selectfont}%
  \ifx\svgwidth\undefined%
    \setlength{\unitlength}{1103.69897461bp}%
    \ifx\svgscale\undefined%
      \relax%
    \else%
      \setlength{\unitlength}{\unitlength * \real{\svgscale}}%
    \fi%
  \else%
    \setlength{\unitlength}{\svgwidth}%
  \fi%
  \global\let\svgwidth\undefined%
  \global\let\svgscale\undefined%
  \makeatother%
  \begin{picture}(1,0.40119363)%
    \lineheight{1}%
    \setlength\tabcolsep{0pt}%
    \put(0,0){\includegraphics[width=\unitlength,page=1]{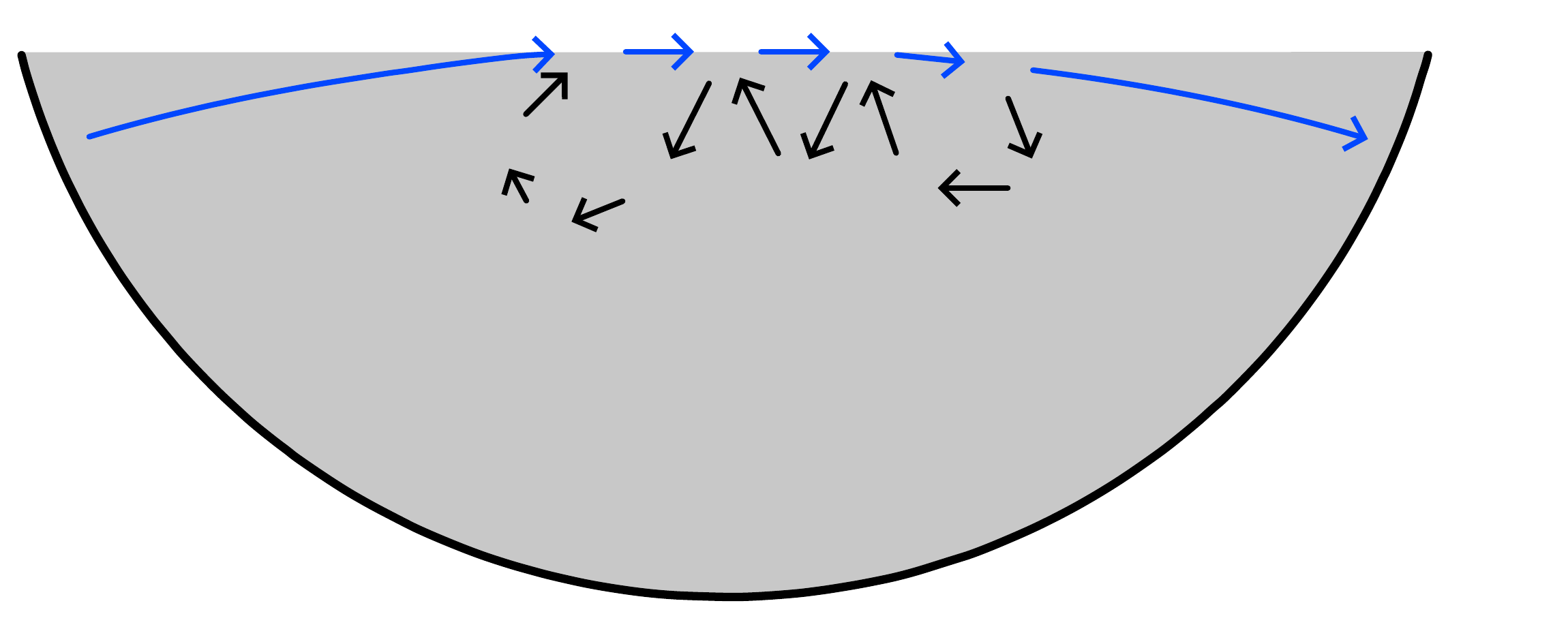}}%
    \put(0.60015005,0.32272404){\color[rgb]{0,0,0}\makebox(0,0)[lt]{\lineheight{1.25}\smash{\begin{tabular}[t]{l}$\delta_j$\end{tabular}}}}%
    \put(0.43454874,0.18032924){\color[rgb]{0,0,0}\makebox(0,0)[lt]{\lineheight{1.25}\smash{\begin{tabular}[t]{l}$z_{\delta_j}$\end{tabular}}}}%
    \put(0.26603812,0.22124094){\color[rgb]{0,0,0}\makebox(0,0)[lt]{\lineheight{1.25}\smash{\begin{tabular}[t]{l}$z_{\delta_{j+1}}$\end{tabular}}}}%
    \put(0.15839464,0.37582675){\color[rgb]{0.09019608,0.08627451,0.07058824}\makebox(0,0)[lt]{\lineheight{1.25}\smash{\begin{tabular}[t]{l}$\textcolor{blue}{p^-_{\delta_j}}$\end{tabular}}}}%
    \put(0.73923146,0.37329426){\color[rgb]{0.09019608,0.08627451,0.07058824}\makebox(0,0)[lt]{\lineheight{1.25}\smash{\begin{tabular}[t]{l}$\textcolor{blue}{p^+_{\delta_j}}$\end{tabular}}}}%
    \put(0.48498641,0.39138994){\color[rgb]{0.09019608,0.08627451,0.07058824}\makebox(0,0)[lt]{\lineheight{1.25}\smash{\begin{tabular}[t]{l}$\textcolor{blue}{p_{\delta_j}}$\end{tabular}}}}%
    \put(0,0){\includegraphics[width=\unitlength,page=2]{prl.pdf}}%
  \end{picture}%
\endgroup%

	\caption{$z_{\delta_j}^+$ cannot leave $R(p)$ through an intersection with $p_{\delta_j}^-$ without forcing $z_{\delta_{j+1}}$ to do the same.}
		\label{fig:JFG}
	\end{figure}

	Symmetrically, one may prove that for any right-supporting substrand $\delta$ of $p$, either the substrand $z_\delta^-$ (considered as a path in $Q$) does not intersect $p$ or its first intersection with $p$ before $\delta_j$ is with $p^-_\delta$.

	Now, we are able to show that neither $z_\delta^+$ nor $z_\delta^-$ intersect with $p$. Indeed, if $z_\delta^+$ intersects with $p$, then by the claim above, its first intersection must be with $p_\delta^+$. To avoid a self-intersection, this forces $z_\delta^-$ to cross out of $R(p)$ through an intersection with $p_\delta^+$, contradicting the symmetrized claim.
	It follows that $z_\delta^+$ does not intersect with $p$; symmetrically, $z_\delta^-$ does not intersect with $p$. This shows that $z_\delta$ is contained entirely within $R(p)$.
	Note now that $z_\delta^+$ cannot reach any strand-vertex in $[h(p)^{cl},t(z_\delta^-)]$ without crossing $z_\delta^-$ or crossing out of $R(p)$, both of which are forbidden, hence $z_\delta^+$ is forced to end between $t(z_\delta^-)$ and $t(p)$. It follows that $z_\delta$ is in $\CL(h(p),t(p))$.
\end{proof}

\begin{prop}\label{lem:rs-enough}
	If $p$ is a cycleless rightmost boundary path from $v_1$ to $v_2$ and $S$ is the set of right-supporting strands of $p$, then $p=r_S$.
	{Moreover, $S$ is $(v_1,v_2)$-extremally connected if and only if $p$ is right-direct.}
\end{prop}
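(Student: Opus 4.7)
The plan is to reduce to the case where $p$ is right-direct and handle that case in detail. Assume first that $p$ is right-direct. Lemma~\ref{lem:rs-contain} already tells us that every right-supporting strand of $p$ lies in $\CL(v_2, v_1)$ and is contained in $R(p)$, so $S \subseteq \CL(v_2, v_1)$, and the connected component $R^\cup(S)$ of the complement of the strands in $S$ containing $v_1, v_2$ includes all of the left side of $p$. I would first establish the \emph{chaining property}: order the right-supporting substrands as $\delta_1, \ldots, \delta_m$ so that $\delta_1$ begins at the initial RSP adjacent to $v_2^{cl}$ and indices increase as we trace $p$ backwards toward $v_1$. Then consecutive $\delta_j$ and $\delta_{j+1}$ share an endpoint in the disk, namely the crossing of the strands $z_{\delta_j}$ and $z_{\delta_{j+1}}$. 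This follows because each internal vertex of $p$ lying on the common boundary of two adjacent right-supporting faces admits a unique strand passing through it on the $R(p)$ side, which must exit one face via a non-terminal RSP and enter the next via a non-initial RSP; alternatives are ruled out by the alternating intersection property and the absence of bad lenses (Theorem~\ref{thm:thin-bad-conf}).

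From the chaining property it is immediate that the extremal substrands of $S$ form a single path in the disk from $v_2^{cl}$ to $v_1^{cc}$, so $S$ is $(v_1, v_2)$-extremally connected. The path $z_S$, viewed as a path in $Q$, then consists of exactly the non-$p$ arrows of each right-supporting face of $p$. By Definition~\ref{defn:need}, the return path $r_S$ is assembled from the arrows of the clockwise faces containing arrows of $z_S$ that are not themselves in $z_S$; by Lemma~\ref{lem:faces-right} these are precisely the arrows of $p$ that sit inside right-supporting faces. Hence $r_S = p$ in the right-direct case.

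For the general case, decompose $p = p_1 \cdots p_k$ into its right-direct components, with transitions at boundary vertices $w_1, \ldots, w_{k-1} \in (v_2, v_1)$. By definition the right-supporting strands of $p$ are $S = S_1 \sqcup \cdots \sqcup S_k$, where $S_i$ is the right-supporting strand set of $p_i$. Applying the right-direct case to each $p_i$ gives that each $S_i$ is $(t(p_i), h(p_i))$-extremally connected with $r_{S_i} = p_i$, and these form the maximal extremally connected components of $S$ since the extremal substrand chains terminate at the strand-vertices $w_j^{cl}$ and $w_j^{cc}$ on opposite sides of each $w_j$. The assembly rule in Definition~\ref{defn:need} then produces $r_S = r_{S_1} \cdots r_{S_k} = p_1 \cdots p_k = p$ (no bridging $y_i$'s are needed because the components meet precisely at the $w_j$). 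Moreover, $S$ is $(v_1,v_2)$-extremally connected exactly when $k=1$, i.e.\ when $p$ is right-direct, giving the ``moreover'' claim.

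The main obstacle is the chaining step in the right-direct case: one must rigorously verify that adjacent right-supporting substrands truly share a disk-point (so that their extremal portions paste into an unbroken path along $\partial R^\cup(S)$) rather than leaving a gap in the complement of $S$ through which $v_1$ or $v_2$ could escape $R^\cup(S)$. The argument is close in spirit to Lemma~\ref{lem:rs-contain}, but now analyzes the behavior of strands at each internal vertex of $p$ via the structure of initial/middle/terminal RSPs, again crucially invoking the absence of bad configurations to exclude unwanted strand behavior.
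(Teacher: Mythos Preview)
Your approach is essentially the paper's, just reorganized: the paper first asserts $p=r_S$ directly (observing that the arrows of $p$ are exactly the arrows of right-supporting faces not lying in right-supporting substrands, then invoking Lemma~\ref{lem:rs-contain} to upgrade ``substrands'' to ``strands''), and only afterward treats the extremal-connectedness dichotomy. You invert this order, first establishing the chaining of right-supporting substrands in the right-direct case to get extremal connectedness, and then reading off $r_S=p$. Both routes rest on the same underlying observation---that the right-supporting substrands of $p$ coincide with the extremal substrands of $S$---which the paper asserts in one sentence and you unpack more carefully.

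There is one small gap in your general-case assembly. You write that ``no bridging $y_i$'s are needed because the components meet precisely at the $w_j$,'' but this overlooks the possibility that some right-direct component $p_i$ is itself equal to $y_j$ (the case $h(p_i)+1=t(p_i)$). By definition such a $p_i$ contributes no right-supporting strands, so $S_i=\emptyset$ and it is not one of the maximal extremally connected components of $S$. In that situation the neighbouring components $S_{i-1}$ and $S_{i+1}$ do \emph{not} meet at a common vertex: their return paths end at $j+1$ and begin at $j$, respectively, and the bridging $y_j$ prescribed in Definition~\ref{defn:need} is precisely what recovers $p_i$. Once you allow these bridges, your decomposition argument goes through unchanged and agrees with the paper's.
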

\begin{proof}
	Right-supporting faces and right-supporting substrands are defined such that the arrows of $p$ are precisely the arrows of right-supporting faces of $p$ which are not contained in right-supporting substrands of $p$.
	Lemma~\ref{lem:rs-contain} shows that right-supporting strands of $p$ are contained in $R(p)$, hence that this is the same as the arrows of right-supporting faces of $p$ which are not contained in right-supporting \emph{strands} of $p$. This shows that $p=r_S$.

	It remains to show that $S$ is $(v_1,v_2)$-extremally connected if and only if $p$ is right-direct.
	If $p$ factors through no vertices of $(v_2,v_1)$, then the right-supporting substrands of $p$ paste together to form a path from $v_2^{cl}$ to $v_1^{cc}$, hence the set of right-supporting strands of $p$ is $(v_1,v_2)$-extremally connected. On the other hand, if $p$ is not right-direct, then we may write $p$ as a composition of boundary paths $p_1p_2$, where $h(p_1)=t(p_2)\in(v_2,v_1)$. Then any right-supporting strand of $p$ either starts and ends in $[h(p_2)^{cl},t(p_2)^{cc}]$, or starts and ends in $[h(p_1)^{cl},t(p_1)^{cc}]$, hence $S$ cannot be extremally connected.
\end{proof}

We now argue that the relation number of a non-right-direct (minimal) rightmost boundary path may be calculated from the relation numbers of its right-direct subpaths.

\begin{lemma}\label{lem:workit}
	Let $p$ be a minimal rightmost boundary path. Let $p=p_1\dots p_m$, where each $p_i$ is a right-direct component of $p$.
	Then ${\Y}(p)=\sum_{i=1}^m {\Y}(p_i)$ and any strand of $\CL(h(p),t(p))$ is a strand of $\CL(h(p_i),t(p_i))$ for exactly one $i$.
\end{lemma}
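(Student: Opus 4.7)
The proof splits naturally into the relation-number identity and the strand partition, and the two parts share a common preliminary. Since $p$ is cycleless, minimal, and rightmost, the boundary vertices $v_1=u_0,u_1,\ldots,u_{m-1},u_m=v_2$ visited by $p$ appear in this order along the counter-clockwise boundary arc from $v_1$ to $v_2$: if some $u_i$ were clockwise of $u_{i-1}$ in the interval $(v_2,v_1)$, then the subpath from $u_{i-1}$ to $u_i$ together with the intervening boundary arc would enclose faces in a way forcing either a face-cycle inside $p$ (contradicting minimality) or a self-intersection (contradicting cyclelessness). It follows that the strand-vertex interval $[v_2^{cl},v_1^{cc}]$ partitions as the disjoint union $\bigsqcup_{i=1}^m[u_i^{cl},u_{i-1}^{cc}]$ and, in particular, that $\reach_y(p)=\sum_{i=1}^m\reach_y(p_i)$.

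For the first part, each $p_i$ is a subpath of the minimal path $p$ and is therefore itself minimal (if $[f^k]$ could be factored out of $[p_i]$, centrality of $[f]$ would let us factor it out of $[p]$). Hence $C(p_i)=0$, and Proposition~\ref{thm:cyc-set} yields $[p_i(xy)^{{\Y}(p_i)}]=[y_{u_{i-1}-1}^{u_{i-1}-u_i}]$ in $A_Q$. Setting $N=\sum_{i=1}^m{\Y}(p_i)$ and distributing copies of the central element $[xy]$ (Lemma~\ref{x-y-paths-thin}) across the factorization $[p]=[p_1\cdots p_m]$ gives
\[[p(xy)^N]=\prod_{i=1}^m[p_i(xy)^{{\Y}(p_i)}]=\prod_{i=1}^m[y_{u_{i-1}-1}^{u_{i-1}-u_i}]=[y_{t(p)-1}^{\reach_y(p)}],\]
where the last equality uses $\reach_y(p)=\sum_i\reach_y(p_i)$ from the preliminary. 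The exponent $k$ solving $[p(xy)^k]=[y_{t(p)-1}^{\reach_y(p)}]$ is uniquely determined: two distinct solutions $k_1>k_2$ would, after canceling $[p(xy)^{k_2}]$ via Theorem~\ref{thm:cancellative-closure}, force the nonconstant cycle $[(xy)^{k_1-k_2}]$ to equal an idempotent, contradicting Remark~\ref{rem:cycle-cant-be-constant}. Thus ${\Y}(p)=N=\sum_i{\Y}(p_i)$.

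For the strand partition, pairwise disjointness of the sub-intervals immediately yields the uniqueness claim: the sets $\CL(u_i,u_{i-1})$ are pairwise disjoint subsets of $\CL(v_2,v_1)$, so a given strand lies in at most one of them. To prove existence, I argue by contradiction: suppose some $z\in\CL(v_2,v_1)$ has endpoints $w_1\leq w_2$ straddling some $u_l$, meaning $w_1\leq u_l^{cc}<u_l^{cl}\leq w_2$. As a simple arc in the disk, $z$ separates the disk into a ``short arc'' region containing the boundary vertex $u_l$ and a ``long arc'' region containing both $v_1$ and $v_2$. Since $p$ travels from $v_1$ to $v_2$ while visiting $u_l$, $p$ must cross $z$ an even number of times, at least twice. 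Applying Proposition~\ref{lem:rs-enough} to $p_l$ and $p_{l+1}$, the extremal substrand path of $S_l$ terminates at strand-vertex $u_l^{cl}$ and the extremal substrand path of $S_{l+1}$ begins at strand-vertex $u_l^{cc}$; combining this with the containment statement of Lemma~\ref{lem:rs-contain}, the forced crossings of $z$ with the right-supporting structure of $p$ near $u_l$ produce either a bad lens with one of these right-supporting strands or a self-intersecting strand, contradicting Theorem~\ref{thm:thin-bad-conf}.

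The main obstacle is this last step of the second part: rigorously identifying the bad configuration produced by a straddling $z$. One must carefully track how $z$'s orientation interacts with the extremal substrand paths of $S_l$ and $S_{l+1}$ at the pinch point $u_l$, and the case analysis depends on whether the endpoints $w_1,w_2$ of $z$ lie in sub-intervals immediately adjacent to the $l$th one or further away; in each case a specific pair of strands must be exhibited whose intersection pattern violates the no-bad-configuration hypothesis.
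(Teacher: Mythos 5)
Your argument for the relation-number identity $\Y(p)=\sum_i\Y(p_i)$ is correct and follows the same route as the paper: each $p_i$ is minimal (as a subpath of a minimal path), the reach $\reach_y$ telescopes because the visited boundary vertices are in clockwise order, and one concatenates the defining equalities $[p_i(xy)^{\Y(p_i)}]=[y_{t(p_i)-1}^{\reach_y(p_i)}]$ using centrality of $[xy]$. (A small misattribution: the displayed equality is the definition of $\Y(p_i)$ for a minimal path, not the content of Proposition~\ref{thm:cyc-set}.) Your uniqueness remark is harmless but unnecessary, since $\Y$ is defined via c-values.

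The second part is where there is a genuine gap, and you flag it yourself: you reduce the existence claim to showing that a strand $z\in\CL(h(p),t(p))$ straddling some intermediate vertex $t(p_j)$ must create a bad configuration, and then acknowledge you have not carried out the case analysis. This is not a minor detail to fill in later; it is the entire content of the claim, and the ``bad lens or self-intersection'' approach is not the shortest road to it. The paper's argument is different and much shorter: since $p$ is the minimal rightmost path, $p=r_S$ by Proposition~\ref{lem:rs-enough}, and by Proposition~\ref{thm:leftmost-c-valueZ} the minimal path $p$ is to the left of the rightmost path $r_{\{z\}}$, so $R(r_{\{z\}})\subseteq R(p)$. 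A strand $z$ straddling $t(p_j)$ cuts $t(p_j)$ and all of its incident faces off from $v_1,v_2$, placing them in $R(r_{\{z\}})\subseteq R(p)$; but $p$ factors through $t(p_j)$ by assumption, which is impossible when every face around $t(p_j)$ lies strictly on $p$'s right. No bad-configuration analysis is needed. I would recommend replacing your sketch with this order-theoretic argument, both because it closes the gap and because it is the mechanism the surrounding lemmas (\ref{thm:leftmost-c-valueZ}, \ref{lem:rs-contain}, \ref{lem:rs-enough}) were built to supply.
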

\begin{proof}
	By the definition of relation number, $[y_{t(p_i)-1}^{\reach_y(p_i)}]=[p_if^{{\Y}(p_i)}]$ for each $i\in[m]$. Then
	\[[pf^{\sum_{i=1}^m{\Y}(p_i)}]=[p_1f^{{\Y}(p_1)}]\dots[p_mf^{{\Y}(p_m)}]=[y_{t(p_1)-1}^{\reach_y(p_1)}]\dots[y_{t(p_m)-1}^{\reach_y(p_m)}].\]
	Since $p$ is minimal and hence cycleless, the vertices $h(p)=h(p_m),h(p_{m-1}),\dots,h(p_1),t(p_1)=t(p)$ must be in clockwise order. Then we calculate
		$\sum_{i=1}^m\reach_y(p_i)=\sum_{i=1}^mt(p_i)-h(p_i)=t(p)-h(p)=\reach_y(p)$
		using a telescoping series, where each $t(p_i)-h(p_i)$ and $t(p)-h(p)$ is calculated modulo $n$ so that it lies in $[n]$.
		It then follows that $[pf^{\sum_{i=1}^m{\Y}(p_i)}]=[y_{t(p)-1}^{\reach_y(p)}]$.

	It is immediate that any strand of $\CL(h(p),t(p))$ may be a strand of $\CL(h(p_i),t(p_i))$ for at most one $i$. Suppose that some strand $z$ of $\CL(h(p),t(p))$ is not a strand of any $\CL(h(p_i),t(p_i))$. Then $z$ begins before $t(p_j)$ and ends after $t(p_j)$ for some $j$. Then $r_z$, and hence $p=r_S$, does not factor through $t(p_j)$, a contradiction. This ends the proof.
\end{proof}

We now have the tools to understand minimal rightmost paths, their relation numbers, and their directness by looking at clockwise strands.

\begin{thm}\label{thm:rightmost-relation-num}
	Let $v_1$ and $v_2$ be boundary vertices of $Q$. Then
	\begin{enumerate}
		\item\label{b1} the minimal rightmost path from $v_1$ to $v_2$ is $r_{\CL(v_2,v_1)}$,
		\item\label{b2} the right relation number ${\Y}(r_{\CL(v_2,v_1)})$ is the cardinality of $\CL(v_2,v_1)$, and
		\item\label{b4} the path $r_{\CL(v_2,v_1)}$ is right-direct if and only if $\CL(v_2,v_1)$ is $(v_1,v_2)$-extremally connected.
	\end{enumerate}
\end{thm}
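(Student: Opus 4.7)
The plan is to deduce part (3) from part (1) together with Proposition~\ref{lem:rs-enough}, establish part (1) via uniqueness of the minimal rightmost path plus a topological identification of right-supporting strands, and derive part (2) by reducing to the right-direct case and inducting on the number of clockwise strands. Part (3) is essentially free once (1) is proven: the set of right-supporting strands of the unique minimal rightmost path $r_{\CL(v_2,v_1)}$ will equal $\CL(v_2,v_1)$, and Proposition~\ref{lem:rs-enough} characterizes right-directness of this path in terms of $(v_1,v_2)$-extremal connectedness of that set.

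For part (1), let $p$ be the unique minimal rightmost path from $v_1$ to $v_2$, which exists by Corollary~\ref{cor:leftmost-unique}. By Proposition~\ref{lem:rs-enough}, $p=r_S$ where $S$ is the set of right-supporting strands of $p$, and Lemma~\ref{lem:rs-contain} shows that $S\subseteq\CL(v_2,v_1)$. I would complete the argument by establishing the reverse inclusion: suppose for contradiction that $z\in\CL(v_2,v_1)\setminus S$. Since both endpoints of $z$ lie in $[v_2^{cl},v_1^{cc}]$, the strand $z$ cannot lie entirely in $L(p)$. If $z$ is contained in $R(p)$, then $S\cup\{z\}$ is still a collection of strands in $\CL(v_2,v_1)$ and yields a boundary return path strictly further to the right than $p$, contradicting rightmostness. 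If $z$ crosses $p$, the alternating intersection property together with the absence of bad configurations (Theorem~\ref{thm:thin-bad-conf}) rule out the resulting configurations: any such crossing pattern forces a bad lens between $z$ and some strand in $S$ or a self-intersection of $z$.

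For part (2), apply Lemma~\ref{lem:workit} to decompose $p=r_{\CL(v_2,v_1)}$ into right-direct components $p_1,\dots,p_m$, obtaining ${\Y}(p)=\sum_i{\Y}(p_i)$ and a partition of $\CL(v_2,v_1)$ as the disjoint union of the $\CL(h(p_i),t(p_i))$, so it suffices to verify the claim when $p$ is right-direct. Induct on $|\CL(v_2,v_1)|$: the base case $|\CL(v_2,v_1)|=0$ forces $p=y_{t(p)-1}^{\reach_y(p)}$ and ${\Y}(p)=0$. For the inductive step, select an extremal strand $z\in\CL(v_2,v_1)$ (one lying along the boundary of $R^\cup(\CL(v_2,v_1))$ and adjacent to the boundary arc from $v_2$ to $v_1$), cut along $z$ to isolate a dimer submodel (via Theorem~\ref{thm:submodel}), and apply the induction hypothesis; the contribution of $z$ introduces exactly one additional face-path when comparing the new minimal rightmost path with $y^{\reach_y(p)}$, giving ${\Y}(p)=|\CL(v_2,v_1)|$. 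The main obstacle throughout is the topological analysis in part (1), in particular the careful use of the alternating intersection property and the absence of bad configurations to exclude any strand of $\CL(v_2,v_1)$ from sitting in $R(p)$ without already being right-supporting for $p$.
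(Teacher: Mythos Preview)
Your proof of part~(1) has a real gap. You aim to show that the right-supporting strands $S$ of the minimal rightmost path $p$ exhaust $\CL(v_2,v_1)$, but this equality need not hold: a strand of $\CL(v_2,v_1)$ can sit entirely in $R(p)$ without sharing any right-supporting face with $p$ (think of a short strand nested behind the extremal substrands of $S$). Your contradiction for this case is also pointed the wrong way: enlarging $S$ shrinks $R^\cup(S)$ and therefore moves $r_S$ to the \emph{left}, not to the right, so adding $z$ gives no contradiction to $p$ being rightmost. The paper sidesteps this entirely. It does not try to identify $S$; instead it observes that by Proposition~\ref{thm:leftmost-c-valueZ} the minimal rightmost path is the unique rightmost path lying to the left of every other rightmost path, that by Proposition~\ref{lem:rs-enough} and Lemma~\ref{lem:rs-contain} every cycleless rightmost path is $r_T$ for some $T\subseteq\CL(v_2,v_1)$, and that by construction $r_{\CL(v_2,v_1)}$ is to the left of every such $r_T$. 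This pins down $r_{\CL(v_2,v_1)}$ as the minimal rightmost path directly, and part~(3) then follows from Proposition~\ref{lem:rs-enough} without needing $S=\CL(v_2,v_1)$.

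Your inductive scheme for part~(2) also needs repair. ``Cutting along a strand'' does not produce a dimer submodel in the sense of Definition~\ref{defn:disk-submodel}: submodels are unions of faces of $Q$, whereas a strand passes through the interiors of faces. The paper instead inducts on $\reach_y(p)$. In the right-direct case it compares $p$ with the minimal rightmost path $q$ from $v_1$ to $v_2+1$: one checks $px$ is minimal (else $ry$ would be a minimal path not to the left of $p$, contradicting Proposition~\ref{thm:leftmost-c-valueZ}), whence $C(qy)=1$ gives $\Y(q)+1=\Y(p)$, and the strand starting at $v_2^{cl}$ is the single element of $\CL(v_2,v_1)\setminus\CL(v_2+1,v_1)$. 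Your reduction to the right-direct case via Lemma~\ref{lem:workit} matches the paper and is fine.
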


\begin{proof}
	Proposition~\ref{thm:leftmost-c-valueZ} shows that the minimal rightmost path from $v_1$ to $v_2$ must be the (unique) rightmost path which is to the left of all other rightmost paths from $v_1$ to $v_2$. The first statement of Proposition~\ref{lem:rs-enough} states that every rightmost path is of the form $r_S$ for some $S\subseteq\CL(v_2,v_1)$. It follows from the definition that $r_{\CL(v_2,v_1)}$ is to the left of all such paths $r_S$, hence is minimal; this shows~\eqref{b1}.
	The second statement of Proposition~\ref{lem:rs-enough} shows~\eqref{b4}.

	It remains to show~\eqref{b2}.
	We show by induction on $\reach_y(p)$ that ${\Y}(r_{\CL(v_2,v_1)}=|\CL(v_2,v_1)|$. The base case is when $v_1-v_2=1$; in this case, $\CL(v_2,v_1)$ is empty and $r_{\CL(v_2,v_1)}$ is simply $y_{v_2}$, which has a right relation number of zero by definition.
	Now let $p:=r_{\CL(v_2,v_1)}$ such that $v_2+1\neq v_1$ and suppose we have shown the result for rightmost paths $p'$ with $\reach_y(p')<\reach_y(p)$.

	If $p$ factors through a vertex of $(v_2,v_1)$, then we may write $p=p_1p_2$, where $h(p_1)=t(p_2)\in(v_2,v_1)$. It follows from Lemma~\ref{lem:workit} and the induction hypothesis that
	\[|\CL(h(p),t(p))|=\sum_{i=1}^m|\CL(h(p_i),t(p_i))|=\sum_{i=1}^m{\Y}(p_i)={\Y}(p)\]
	and the result is proven. We may now assume that $p$ factors through no vertex of $(v_2,v_1)$.

	We claim that the path $px_{v_2}$ from $v_1$ to $v_2+1$ is minimal.
	If not, then let $r$ be the minimal path from $t(px)=v_1$ to $h(px)=v_2+1$; then $C(r)<C(px)$. Then $C(ry)<C(pxy)=1$, so $ry$ is a minimal path from $v_1$ to $v_2$. On the other hand, the path $ry$ is not to the left of the rightmost path $p$ (since $p$ cannot factor through $v_2+1\in(v_2,v_1)$), which contradicts Proposition~\ref{thm:leftmost-c-valueZ}. This shows that $px$ is minimal.
	Let $q$ be the rightmost minimal path from $t(px)=v_1$ to $h(px)=v_2+1$. Since $q$ is equivalent to $px$, we know that $[qy]=[pxy]$ has a c-value of one. It follows that
	\[C(y_{t(q)-1}^{h(q)-t(q)})+1=C((xy)^{{\Y}(q)}q)+1=C((xy)^{{\Y}(q)}qy)=C(y_{t(q)-1}^{h(p)-t(p)}).\]
	In other words, ${\Y}(q)+1={\Y}(p)$. 

	Let $z$ be the strand beginning at the boundary arrow in $R(p)$ incident to $v_2=h(p)$. Then $z$ is a right-supporting strand of $p$, hence $z$ is in $\CL(v_2,v_1)$ and
$\CL(v_2-1,v_1)\cup\{z\}=\CL(v_2,v_1)$. The induction hypothesis 
	${\Y}(q)=|\CL(v_2+1,v_1)|=|\CL(v_2,v_1)|-1$ then gives the second equality of the calculation
	\[{\Y}(p)={\Y}(q)+1=|\CL(v_2-1,v_1)|+1=|\CL(v_2,v_1)|\]
	and the proof is complete.
\end{proof}
We are now ready to state and prove the main result of this subsection, which characterizes arrow-defining paths of $Q$ and their relation numbers in terms of clockwise and counter-clockwise strands.
\begin{thm}\label{thm:main-strand}
	Let $v_1$ and $v_2$ be vertices of a thin dimer model $Q$.
	Then a minimal path $p$ from $v_1$ to $v_2$ is arrow-defining if and only if $\CL(v_2,v_1)$ and $\CC(v_2,v_1)$ are both $(v_1,v_2)$-extremally connected.
	In this case, ${\X}(p)=|\CC(v_2,v_1)|$ and ${\Y}(p)=|\CL(v_2,v_1)|$.
\end{thm}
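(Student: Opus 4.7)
The plan is to combine Proposition~\ref{thm:direct-pair-iff-achinko} with Theorem~\ref{thm:rightmost-relation-num} and the natural dual statement for leftmost paths.

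By Proposition~\ref{thm:direct-pair-iff-achinko}, a minimal path $p$ from $v_1$ to $v_2$ is arrow-defining if and only if the unique minimal rightmost path $p_r$ and the unique minimal leftmost path $p_l$ from $v_1$ to $v_2$ are both direct. Since directness is equivalent to being simultaneously right-direct and left-direct, this splits into four conditions on $p_r$ and $p_l$.

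The first step is to observe that $p_r$ is automatically left-direct and, dually, $p_l$ is automatically right-direct. Indeed, $p_r = r_{\CL(v_2,v_1)}$ by Theorem~\ref{thm:rightmost-relation-num}\eqref{b1}, and every strand in $\CL(v_2,v_1)$ begins and ends in the clockwise arc of strand-vertices from $v_2^{cl}$ to $v_1^{cc}$. By construction, every arrow of $r_{\CL(v_2,v_1)}$ lies within the subregion of the disk bounded between these strands and the clockwise boundary arc from $v_2$ to $v_1$; consequently $r_{\CL(v_2,v_1)}$ can only visit boundary vertices in $\{v_1,v_2\}\cup(v_2,v_1)$, which makes it left-direct. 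The claim for $p_l$ follows by the dual argument (reversing every arrow of $Q$, which interchanges rightmost with leftmost and $\CL$ with $\CC$).

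With these observations, $p$ is arrow-defining if and only if $p_r$ is right-direct and $p_l$ is left-direct. Theorem~\ref{thm:rightmost-relation-num}\eqref{b4} gives that $p_r$ is right-direct if and only if $\CL(v_2,v_1)$ is $(v_1,v_2)$-extremally connected; the dual statement gives that $p_l$ is left-direct if and only if $\CC(v_2,v_1)$ is $(v_1,v_2)$-extremally connected. Combining these yields the biconditional. For the relation numbers, the paths $p$, $p_r$, $p_l$ are all path-equivalent (being minimal paths between the same vertices) and hence share relation numbers; applying Theorem~\ref{thm:rightmost-relation-num}\eqref{b2} gives ${\Y}(p)={\Y}(p_r)=|\CL(v_2,v_1)|$, and the leftmost dual gives ${\X}(p)={\X}(p_l)=|\CC(v_2,v_1)|$.

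The main obstacle is rigorously justifying that $p_r$ is always left-direct (with $p_l$ right-direct by duality). While geometrically plausible from the one-sided nature of the construction of $r_{\CL(v_2,v_1)}$, a careful argument requires unpacking Definition~\ref{defn:need} to confirm that every arrow of $r_{\CL(v_2,v_1)}$ is incident to a strand-vertex in $[v_2^{cl},v_1^{cc}]$, so that no boundary vertex of $(v_1,v_2)$ can ever be visited.
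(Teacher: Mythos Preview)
Your overall strategy matches the paper's proof: combine Proposition~\ref{thm:direct-pair-iff-achinko} with Theorem~\ref{thm:rightmost-relation-num} (and its leftmost dual) to handle both the biconditional and the relation numbers. The relation-number argument via path-equivalence of $p$, $p_r$, $p_l$ is exactly what the paper does.

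The one place where you diverge is in reducing ``$p_r$ and $p_l$ are both direct'' to ``$p_r$ is right-direct and $p_l$ is left-direct.'' You argue that $p_r$ is \emph{always} left-direct by unpacking the construction of $r_{\CL(v_2,v_1)}$ in Definition~\ref{defn:need}, and you correctly flag this as the point needing care. The paper bypasses this entirely with a one-line observation: since the minimal leftmost path $p_l$ is to the left of the minimal rightmost path $p_r$ (Proposition~\ref{thm:leftmost-c-valueZ} and its dual), if $p_l$ is left-direct then every boundary vertex of $(v_1,v_2)$ lies in $L(p_l)\subseteq L(p_r)$, so $p_r$ is automatically left-direct as well; symmetrically, $p_r$ right-direct forces $p_l$ right-direct. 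This uses only already-proven facts and avoids re-examining Definition~\ref{defn:need}. Your stronger claim (that $p_r$ is left-direct unconditionally) is true, but the paper's argument shows you do not need it: once you assume $p_l$ is left-direct, the ordering does the work for free. Replacing your geometric unpacking with this ordering argument would close the gap you identified.
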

\begin{proof}
	By Proposition~\ref{thm:direct-pair-iff-achinko}, there is an arrow from $v_1$ to $v_2$ in the boundary algebra if and only if both the minimal rightmost path and the minimal leftmost path from $v_1$ to $v_2$ are direct.
	Since the minimal rightmost path is to the right of the minimal leftmost path, this is true if and only if the minimal rightmost path is right-direct and the minimal leftmost path is left-direct. By Theorem~\ref{thm:rightmost-relation-num}~\eqref{b4},
the former condition is true if and only if $\CL(v_2,v_1)$ is $(v_1,v_2)$-extremally connected; dually, the latter condition is true if and only if $\CC(v_2,v_1)$ is $(v_1,v_2)$-extremally connected. The first statement of the theorem follows.
	By Theorem~\ref{thm:rightmost-relation-num}~\eqref{b2}, the relation number ${\Y}(p)$ of the minimal path $p$ from $v_1$ to $v_2$ is the cardinality of $\CL(v_2,v_1)$. Dually, the relation number ${\X}(p)$ of $p$ is the cardinality of $\CC(v_2,v_1)$. This shows the final statement.
\end{proof}

We will see that the boundary algebra is determined by the arrows between \textit{nonadjacent} vertices and their relation numbers. When $v_1$ and $v_2$ are adjacent, the process of applying Theorem~\ref{thm:main-strand} is slightly simplified.
If, for example, $v_1+1=v_2$, then $\CL(v_2,v_1)$ is empty and $(v_1,v_2)$-extremally connected by definition, hence we need only verify whether $\CC(v_2,v_1)$ is extremally connected in order to check whether the minimal path $x_{v_1}$ from $v_1$ to $v_1+1=v_2$ is arrow-defining. Either way, $\CL(v_2,v_1)$ is empty, and hence the relation number ${\Y}(x_{v_1})=0$.

\begin{example}\label{ex:2}
	Figure~\ref{fig:ex2} shows a dimer model with strand diagram on the left. If we let $v_1=3$ and $v_2=1$, then the strands of $\CL(v_2,v_1)$ and $\CC(v_2,v_1)$ are highlighted. One can see that both sets are $(v_1,v_2)$-extremally connected, so there is an arrow-defining path $\alpha$ from 3 to 1 
	in $Q$ by Theorem~\ref{thm:main-strand}. This result also gives us relations. Since $|\CL(v_2,v_1)|=1={\X}(\alpha)$ and $|\CC(v_2,v_1)|=2={\Y}(\alpha)$, we have
	\[[\alpha(xy)^2]=[x_3^4]\textup{ and }[\alpha(xy)]=[y_2^2].\]
	Starting from these relations and using that $[xy]$ is in the center of the dimer model and the relation $[xy]=[yx]$, one calculates
	\[\begin{matrix}
		[\alpha(yx)^2]=[x_3^4] & [(xy)\alpha(yx)]=[x_3^4] & [(xy)^2\alpha]=[x_3^4] &  [\alpha(xy)]=[y_2^2] & [(yx)\alpha]=[y_2^2] \\
		[\alpha y^2]=[x_3^2] & [y\alpha y]=[x_4^2] & [y^2\alpha]=[x_5^2] &
		[\alpha x]=[y_2] & [x\alpha]=[y_1],
	\end{matrix} \]
	where the bottom row is obtained by cancelling the $x$'s from the extreme sides of the top row. The relations of the bottom row are the cyclic sets of relations associated to $\alpha$ as in Proposition~\ref{thm:cyc-set}.

	One may check that for any other choice of nonadjacent $w_1$ and $w_2$, either $\CL(w_2,w_1)$ or $\CC(w_2,w_1)$ is not $(w_2,w_1)$-extremally connected, hence $a$ is the only nonadjacent arrow-defining path of $Q$.
	Similarly, one may check using Theorem~\ref{thm:main-strand} that all paths $x_i$ and $y_i$ are arrow-defining, with the exception of $y_1$ and $y_2$. This explains which adjacent arrows appear in the Gabriel quiver.

	\begin{figure}[H]
		\centering
		\def\svgscale{0.21}
		\input{dim.pdf_tex}
		\hspace{1cm}
		\def\svgscale{0.21}
		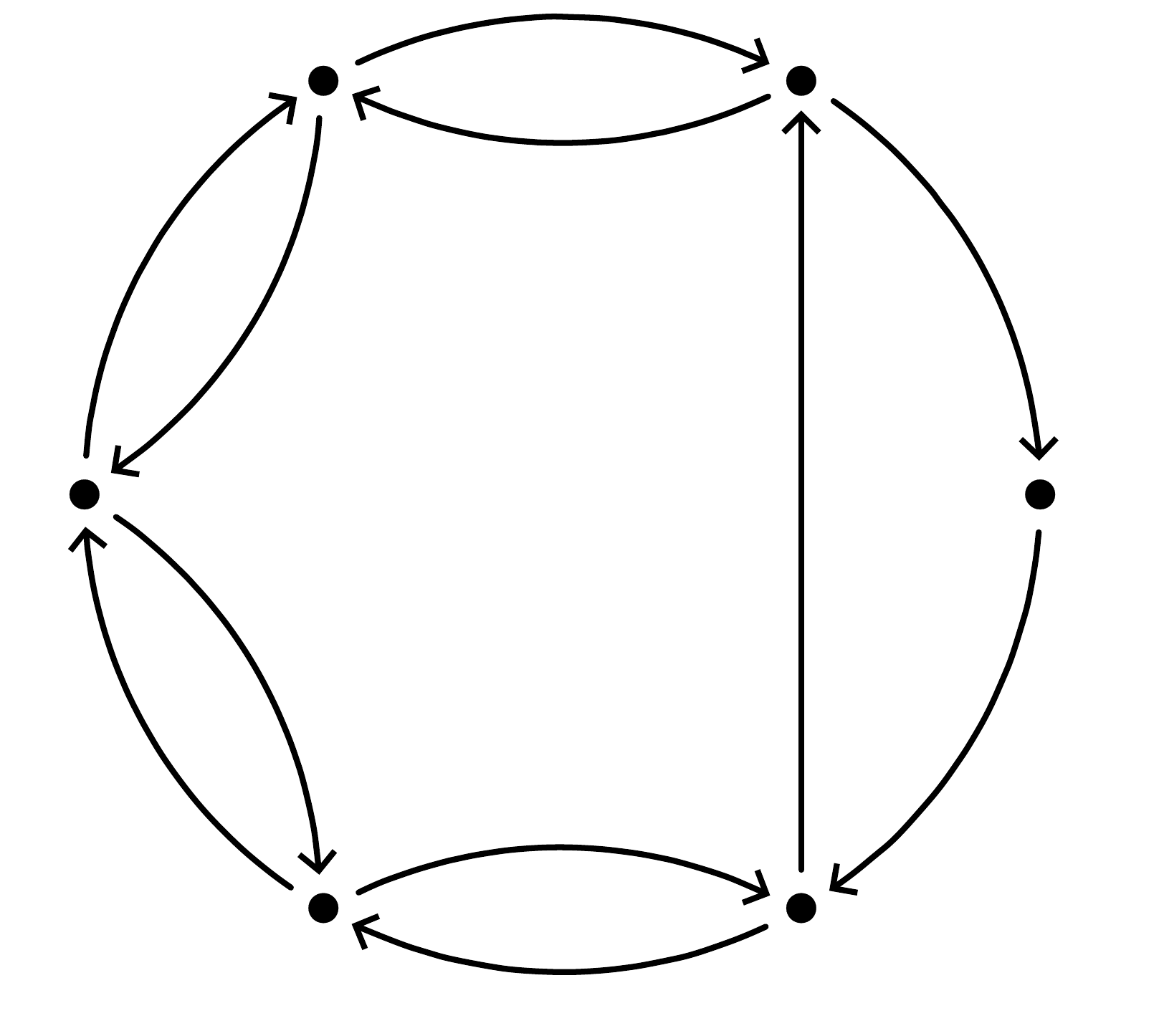
		\caption{On the left is a dimer model with its strand diagram overlayed. Strands of $\CL(3,1)$ and $\CC(3,1)$ are in red, and other strands are in blue. On the right is the Gabriel quiver of its boundary algebra. The nonadjacent arrow $\alpha$ is shown in red and labelled with $2:1$ to denote that ${\X}(\alpha)=2=\big|\CC(3,1)\big|$ and ${\Y}(\alpha)=1=\big|\CL(3,1)\big|$.}
		\label{fig:ex2}
	\end{figure}

\end{example}

\subsection{Decorated Permutations}
\label{ssec:affperm}

Theorem~\ref{thm:main-strand} shows that we may understand minimal paths of $Q$ and their relation numbers through the strand diagram. The definition of extremal connectedness used in this result appears to use information about the strand diagram other than the start and end points of strands. 
On the other hand, we saw in Proposition~\ref{prop:ba-from-perm} that the isomorphism class of the boundary algebra $B_Q$ depends only on the underlying decorated permutation $\pi$ of $Q$. It is then a reasonable desire to obtain information about the boundary algebra directly from this permutation, so that the boundary algebra $B^\pi$ of a positroid $\pi$ may be calculated without choosing a representative dimer model $Q$. We now show that extremal connectedness may be verified only from the information of the start and end points of strands and we use this to obtain a version of Theorem~\ref{thm:main-strand} using the decorated permutation rather than the strand diagram.

Recall that an \textit{inversion} of $\pi$ is an index $i\in[n]$ such that $\pi(i)<i$ and a \textit{noninversion} of $\pi$ is an index $i\in[n]$ such that $\pi(i)\geq i$.

\begin{lemma}\label{lem:fdg}
	Let $\pi$ be a connected decorated permutation of $[n]$ and let $v_1,v_2\in[n]$ be distinct elements. There exists a thin dimer model with decorated permutation $\pi$ such that
	\begin{enumerate}
		\item any two strands of $\CC(v_2,v_1)$ intersect at most once, and
		\item any two strands of $\CL(v_2,v_1)$ intersect at most once.
	\end{enumerate}
\end{lemma}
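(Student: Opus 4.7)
The plan is to start with any thin dimer model with decorated permutation $\pi$, whose existence is guaranteed by Lemma~\ref{lem:perm-to-dimer}, and then modify it by a minimality argument so that the two conclusions hold simultaneously. Since every thin dimer model has strand diagram free of bad configurations (Theorem~\ref{thm:thin-bad-conf}), any two strands intersect at most twice; hence a violation of the conclusion is precisely a pair $z,z'\in\CL(v_2,v_1)$ or $z,z'\in\CC(v_2,v_1)$ with $|z\cap z'|=2$. In this case, the two intersections bound a bigon $B$, and the absence of a bad lens forces $z$ and $z'$ to traverse $B$ in opposite directions.

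I would fix a thin dimer model $Q'$ with decorated permutation $\pi$ which minimizes the complexity
\[c(Q') := \#\{\{z,z'\}\subseteq \CL(v_2,v_1) : |z\cap z'|=2\} + \#\{\{z,z'\}\subseteq \CC(v_2,v_1) : |z\cap z'|=2\}\]
among all thin dimer models with decorated permutation $\pi$, and argue that $c(Q')=0$. Suppose for contradiction that $c(Q')>0$; by symmetry, there exist $z_1,z_2\in\CL(v_2,v_1)$ bounding a bigon $B$ as above. I would choose $B$ to be innermost, meaning no bigon formed by a pair of strands in $\CL(v_2,v_1)\cup\CC(v_2,v_1)$ is strictly contained in $B$. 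Note that any strand $z$ meeting the interior of $B$ must both enter and exit $B$ through the bounding arcs of $z_1$ and $z_2$, since strand endpoints lie on the boundary of the disk.

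The main step would be a local Reidemeister~II-style modification: slide the $B$-bordering arc of $z_1$ across the $B$-bordering arc of $z_2$, eliminating both intersections and rerouting every strand passing through $B$ accordingly. No strand endpoints are moved, so the decorated permutation is preserved. Using the innermost property together with the alternating intersection property (Definition~\ref{defn:postnikov-diagram}\eqref{pd:3}), I would verify that the modified diagram remains a Postnikov diagram (no new self-intersections, closed cycles, or bad lenses), so that by Theorem~\ref{thm:thin-bad-conf} it corresponds to a thin dimer model.

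The main obstacle is showing that the move strictly decreases $c$. The two intersections of $z_1\cap z_2$ are removed; any strand $z'\in\CL(v_2,v_1)\cup\CC(v_2,v_1)$ crossing $B$ exchanges one intersection with $z_1$ for one with $z_2$ (or vice versa), and by innermost-ness cannot previously have borne a bigon with $z_1$ or $z_2$ inside $B$. A careful accounting of how intersections inside $B$ are rearranged, making repeated use of the alternating intersection property, shows that no new pair of $\CL$- or $\CC$-strands becomes doubly intersecting, so $c$ strictly decreases, contradicting minimality. This completes the proof.
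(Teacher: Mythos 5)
The proposal takes a genuinely different route than the paper, and it has real gaps. The paper's proof is a direct construction: after cyclically relabelling so that $v_2=1$, it observes that $\CC(1,v_1)$ consists of inversion strands and $\CL(1,v_1)$ of noninversion strands, and then simply takes $Q$ to be the dimer model of the $\Le$-diagram for $\pi$. In a $\Le$-diagram the inversion strands have a rigid monotone geometry (each decomposes as $z^-z^+$ with $z^-$ moving up-and-left and $z^+$ moving right), which immediately forces any two inversion strands to intersect at most once, and dually for noninversion strands. No surgery is needed.

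Your minimality-plus-surgery plan leaves two essential steps unproven, and I do not think either is routine. First, ``slide the $B$-bordering arc of $z_1$ across that of $z_2$'' is not a standard move on Postnikov diagrams, and it is genuinely unclear that it produces a diagram still satisfying all the conditions of Definition~\ref{defn:postnikov-diagram}: after rerouting every strand passing through $B$, you must re-verify the alternating intersection property (condition~\eqref{pd:3}) for $z_1$, $z_2$, and each rerouted strand, and you must re-verify that the complement of the new diagram still consists of oriented and alternating disks (condition~\eqref{dpd:4}). Writing ``I would verify'' is where the content of the lemma is. Second, and more seriously, the claim that $c$ strictly decreases is where the whole argument lives, and the proposal only gestures at it. If $z_3\in\CL(v_2,v_1)$ passes through $B$ crossing the $z_1$-arc once and the $z_2$-arc once, its crossings with $z_1$ and $z_2$ are relocated by the surgery, and nothing you have said rules out that this creates a new bigon involving $z_3$ and $z_1$ (or $z_2$) outside $B$. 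Your innermostness hypothesis only excludes bigons with \emph{both} endpoints in $\CL\cup\CC$ lying \emph{strictly inside} $B$, which does not control what happens to crossings that straddle the bounding arcs of $B$. Without this accounting the descent can fail and the contradiction evaporates.

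A minor further issue: the assertion that any two strands of a thin strand diagram intersect at most twice is stated without justification; if you rely on it, it should be proved or cited, and it is easy to sidestep anyway by letting $c$ count pairs intersecting at least twice. Overall, the minimality idea is reasonable in spirit but postpones precisely the difficult parts, while the paper's $\Le$-diagram argument is shorter and avoids the surgery entirely.
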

\begin{proof}
	By cyclically permuting the groundset $[n]$, it suffices to suppose that $v_2=1$.
	Observe that each strand of $\CC(1,v_1)$ corresponds to an inversion of $\pi$ and each strand of $\CL(1,v_1)$ corresponds to a noninversion of $\pi$.
	Let $Q$ be the dimer model of the $\Le$-diagram with decorated permutation $\pi$ as in~\cite{ZPostnikov}.
	The surface of the Le-diagram (homeomorphic to a disk) looks like a Young tableau, with a unique upper-left corner and potentially many lower-right corners. See Figure~\ref{fig:le}. Any strand $z$ corresponding to an inversion of $\pi$ may be drawn as $z^-z^+$, where $z^-$ moves up and to the left, and $z^+$ moves straight right. See Figure~\ref{fig:le} for two examples. By properties of Le-diagrams, an intersection between two inversion strands $z_1^-z_1^+$ and $z_2^-z_2^+$ must occur, without loss of generality, between $z_1^+$ and $z_2^-$. It follows that any two inversion strands may intersect at most once, and hence that two strands of $\CC(1,v_1)$ may intersect at most once.
	\begin{figure}[H]
		\centering
		\def\svgscale{0.21}
\begingroup%
  \makeatletter%
  \providecommand\color[2][]{%
    \errmessage{(Inkscape) Color is used for the text in Inkscape, but the package 'color.sty' is not loaded}%
    \renewcommand\color[2][]{}%
  }%
  \providecommand\transparent[1]{%
    \errmessage{(Inkscape) Transparency is used (non-zero) for the text in Inkscape, but the package 'transparent.sty' is not loaded}%
    \renewcommand\transparent[1]{}%
  }%
  \providecommand\rotatebox[2]{#2}%
  \newcommand*\fsize{\dimexpr\f@size pt\relax}%
  \newcommand*\lineheight[1]{\fontsize{\fsize}{#1\fsize}\selectfont}%
  \ifx\svgwidth\undefined%
    \setlength{\unitlength}{582.76599121bp}%
    \ifx\svgscale\undefined%
      \relax%
    \else%
      \setlength{\unitlength}{\unitlength * \real{\svgscale}}%
    \fi%
  \else%
    \setlength{\unitlength}{\svgwidth}%
  \fi%
  \global\let\svgwidth\undefined%
  \global\let\svgscale\undefined%
  \makeatother%
  \begin{picture}(1,0.71530434)%
    \lineheight{1}%
    \setlength\tabcolsep{0pt}%
    \put(0,0){\includegraphics[width=\unitlength,page=1]{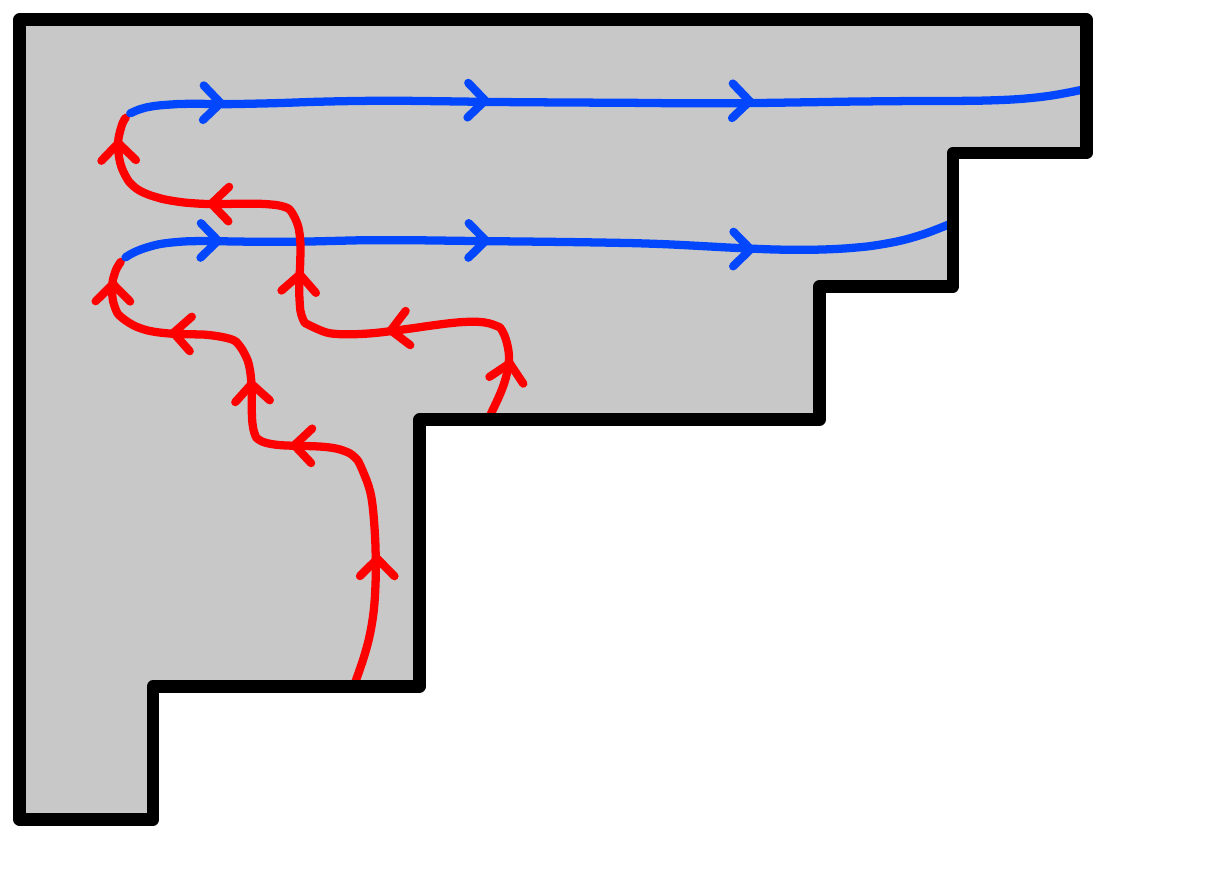}}%
    \put(0.92251265,0.59593618){\color[rgb]{1,0,0}\makebox(0,0)[lt]{\lineheight{1.25}\smash{\begin{tabular}[t]{l}$1$\end{tabular}}}}%
    \put(0.81520886,0.48664892){\color[rgb]{1,0,0}\makebox(0,0)[lt]{\lineheight{1.25}\smash{\begin{tabular}[t]{l}$3$\end{tabular}}}}%
    \put(0.41085616,0.28710382){\color[rgb]{1,0,0}\makebox(0,0)[lt]{\lineheight{1.25}\smash{\begin{tabular}[t]{l}$8$\end{tabular}}}}%
    \put(0.26873051,0.05814537){\color[rgb]{1,0,0}\makebox(0,0)[lt]{\lineheight{1.25}\smash{\begin{tabular}[t]{l}$11$\end{tabular}}}}%
    \put(0,0){\includegraphics[width=\unitlength,page=2]{le.pdf}}%
  \end{picture}%
\endgroup%

		\caption{Two inversion strands drawn in a Le-diagram. The segments $z_i^-$ are in red, and the segments $z_i^+$ are in blue.}
		\label{fig:le}
	\end{figure}

	Similarly, any two strands corresponding to noninversions of $\pi$ may intersect at most once, hence any two strands of $\CL(v_2,v_1)$ may intersect at most once.
\end{proof}

Recall the definition of the clockwise interval $(i,j)$ from Definition~\ref{defn:cyc-int}.

\begin{cor}\label{cor:cr}
	Let $v_1$ and $v_2$ be distinct boundary vertices of $Q$. Then 
	\begin{enumerate}
		\item\label{gf1} $\CL(v_2,v_1)$ is $(v_1,v_2)$-extremally connected if and only if every vertex of $(v_2,v_1)$ is to the left of some strand of $\CL(v_2,v_1)$, and
		\item\label{gf2} $\CC(v_2,v_1)$ is $(v_1,v_2)$-extremally connected if and only if every vertex of $(v_1,v_2)$ is to the right of some strand of $\CC(v_2,v_1)$.
	\end{enumerate}
\end{cor}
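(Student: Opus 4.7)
I would prove (1); statement (2) is entirely symmetric under exchanging ``left'' with ``right.'' First, I would note that whether a boundary vertex $w$ lies to the left of a given strand is purely a condition on the strand's endpoints---namely, that the source, $w^{cc}$, $w^{cl}$, and target appear in this clockwise order---so the right-hand condition depends only on the decorated permutation $\pi$.

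For the forward direction, I would argue topologically. If $\CL(v_2,v_1)$ is $(v_1,v_2)$-extremally connected, its extremal substrands form a path $\gamma$ from $v_2^{cl}$ to $v_1^{cc}$, and together with the boundary arc from $v_1$ counter-clockwise back to $v_2$, this path bounds $R^\cup(\CL(v_2,v_1))$, which contains both $v_1$ and $v_2$. Any $w\in(v_2,v_1)$ lies in the complementary region and is therefore separated from $v_1,v_2$ by some extremal segment of a strand $z\in\CL(v_2,v_1)$. Since $z$ is a clockwise strand between $v_2$ and $v_1$, it has $v_1$ and $v_2$ on its right, so $w$ lies on its left.

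For the reverse direction, I would invoke Lemma~\ref{lem:fdg} to pass to a dimer model with the same decorated permutation as $Q$ in which any two strands of $\CL(v_2,v_1)$ intersect at most once, and then construct the extremal path greedily. Applying the hypothesis to $w=v_2+1$ forces the existence of a strand $z_1\in\CL(v_2,v_1)$ with source $v_2^{cl}$, since this is the only position at which a clockwise strand can leave $v_2+1$ on its left. Given $z_i$ with target $t_i\neq v_1^{cc}$, applying the hypothesis to $w=t_i+1$ yields a strand $z_{i+1}\in\CL(v_2,v_1)$ with source at or before $t_i$ and target strictly past $t_i$; taking $t_{i+1}$ maximal, the single-intersection property ensures that $z_{i+1}$ crosses $z_i$ transversally. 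Concatenating the initial segment of $z_1$, the segments of $z_i$ between consecutive crossings, and the final segment of $z_m$ then produces a path from $v_2^{cl}$ to $v_1^{cc}$ whose image is exactly the union of extremal substrands of $\CL(v_2,v_1)$.

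The main obstacle is justifying the use of Lemma~\ref{lem:fdg}: a priori, extremal connectedness is a property of the strand diagram of $Q$, not just of $\pi$. Since the right-hand condition depends only on $\pi$ and the forward implication established above holds in every $Q$, one can bootstrap the equivalence: after proving the reverse direction in the auxiliary model, the combinatorial nature of the hypothesis transfers the conclusion to $Q$. Making this step fully rigorous may require a small separate argument that extremal connectedness of $\CL(v_2,v_1)$ in any thin dimer model with permutation $\pi$ is determined by $\pi$, which the present corollary itself ultimately establishes.
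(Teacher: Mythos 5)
You have the two directions reversed in difficulty, and your argument for the one you call ``forward'' is the one with the gap. The implication ``every $w \in (v_2,v_1)$ is to the left of some strand $\Rightarrow$ $\CL(v_2,v_1)$ is $(v_1,v_2)$-extremally connected'' is immediate with no model change: if $w$ is to the left of some $z \in \CL(v_2,v_1)$, then $w$ lies in a different component of the complement of $\bigcup\CL(v_2,v_1)$ from $v_1,v_2$, so no vertex of $(v_2,v_1)$ is in the closure of $R^\cup(\CL(v_2,v_1))$, and the extremal substrands therefore concatenate without boundary gaps into a path from $v_2^{cl}$ to $v_1^{cc}$. Your greedy construction via Lemma~\ref{lem:fdg} is unnecessary here (and moreover it is not justified that the greedily chosen chain coincides with the extremal substrands). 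By contrast, your ``topological'' argument for the implication from extremal connectedness to the covering condition does not close the relevant gap: being to the left of $z$ is a condition on the \emph{entire} strand --- equivalently, $w$ must lie in the open clockwise arc from $t(z)$ to $h(z)$ --- and the fact that $w$ is cut off from $R^\cup$ by an extremal \emph{segment} of $z$ does not by itself give this, since in a general thin model two strands of $\CL(v_2,v_1)$ may cross more than once and the extremal path can weave among strands whose full endpoint arcs need not cover $w$. This is precisely where the paper invokes Lemma~\ref{lem:fdg}: in the auxiliary model $Q'$, consecutive right-supporting strands of the rightmost minimal path interleave as $t(z_i) < t(z_{i+1}) \leq h(z_i) < h(z_{i+1})$ clockwise, and the covering is then clear.

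Your concern about transferring the conclusion from $Q'$ back to $Q$, and the potential circularity of a bootstrap, is well-founded; the paper resolves it without appealing to the corollary itself. Theorem~\ref{thm:rightmost-relation-num}~\eqref{b4} recasts $(v_1,v_2)$-extremal connectedness of $\CL(v_2,v_1)$ as the statement that the minimal rightmost path from $v_1$ to $v_2$ factors through no vertex of $(v_2,v_1)$. This is a statement about the boundary algebra alone, hence preserved by the vertex-preserving isomorphism $B_Q\cong B_{Q'}$ of Proposition~\ref{prop:ba-from-perm}, and so extremal connectedness is indeed a $\pi$-invariant as you hoped.
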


\begin{proof}
	We show~\eqref{gf1}; the proof of~\eqref{gf2} is symmetric.
	By Lemma~\ref{lem:fdg} we may choose a dimer model $Q'$ with the same decorated permutation $\pi$ as $Q$ such that two strands of $\CL_{Q'}(v_2,v_1)$ intersect at most once and two strands of $\CC_{Q'}(v_2,v_1)$ intersect at most once.
	
	We first show that $\CL_Q(v_2,v_1)$ is $(v_1,v_2)$-extremally connected if and only if $\CL_{Q'}(v_2,v_1)$ is $(v_1,v_2)$-extremally connected. By Theorem~\ref{thm:rightmost-relation-num}~\eqref{b4}, the set $\CL_Q(v_2,v_1)$ is $(v_1,v_2)$-extremally connected if and only if no minimal path from $v_1$ to $v_2$ in $B_Q$ factors through a vertex of $(v_1,v_2)$. By Proposition~\ref{prop:ba-from-perm}, this is true if and only if no minimal path from $v_1$ to $v_2$ in $B_{Q'}$ factors through a vertex of $(v_1,v_2)$. By Theorem~\ref{thm:rightmost-relation-num}~\eqref{b4} (now applied to $Q'$), this is true if and only if $\CL_{Q'}(v_2,v_1)$ is $(v_1,v_2)$-extremally connected.

	It remains to show that $(v_1,v_2)$-extremal connectedness of $\CL_{Q'}(v_2,v_1)$ is equivalent to the condition that every vertex of $(v_2,v_1)$ is to the left of some strand of $\CL_{Q'}(v_2,v_1)$.
	All of the following calculations will be done in $Q'$.

	First, it is immediate that if the latter condition holds, then $\CL_{Q'}(v_2,v_1)$ is $(v_1,v_2)$-extremally connected. We show the forwards implication.
	Suppose that $\CL_{Q'}(v_2,v_1)$ is extremally connected and order the right-supporting substrands of $p:=r_{\CL_{Q'}(v_2,v_1)}$ as in Definition~\ref{defn:rss} as $\delta_1,\dots,\delta_m$, where $\delta_1$ begins at $h(p)^{cl}$ and indices of substrands increase moving backwards along $p$.
	Since $\CL_{Q'}(v_2,v_1)$ is extremally connected, these substrands paste together to form a path from $v_2^{cl}$ to $v_1^{cc}$. Let $z_1,\dots,z_m$ be the strands containing the substrands $\delta_1,\dots,\delta_m$. All of these are in $\CL_{Q'}(v_2,v_1)$ by Lemma~\ref{lem:rs-contain}, hence by choice of $Q'$ the only intersection between $z_i$ and $z_{i+1}$, for $i\in[m-1]$, must be the intersection occurring between $\delta_i$ and $\delta_{i+1}$. Then $v_2^{cl}\leq t(z_i)<t(z_{i+1})\leq h(z_i)<h(z_{i+1})\leq v_1^{cc}$ is a clockwise ordering for all $i\in[m-1]$. It follows that every vertex in $(v_2,v_1)$ must be to the left of at least one of the strands $z_i$.

	We have shown that $\CL_{Q'}(v_2,v_1)$ is $(v_1,v_2)$-extremally connected if and only if every vertex of $(v_2,v_1)$ is to the left of some strand of $\CL_{Q'}(v_2,v_1)$.
	Since $Q'$ and $Q$ have the same decorated permutation, this is true if and only if every vertex of $(v_2,v_1)$ is to the left of some strand of $\CL_Q(v_2,v_1)$. This completes the proof.
\end{proof}

Consider Figure~\ref{fig:FXFX}. Looking only at the strands $\{z_1,z_2\}$, one sees that $\CL(v_2,v_1)$ is $(v_1,v_2)$-extremally connected. On the other hand, the vertex $w\in (v_2,v_1)$ is not to the left of $z_1$ or $z_2$. Corollary~\ref{cor:cr} implies that there must exist some strand $z_w\in\CC(v_2,v_1)$ which has $w$ to its left, if the dimer model in question is thin.
		\begin{figure}
			\centering
			\def\svgscale{.21}
\begingroup%
  \makeatletter%
  \providecommand\color[2][]{%
    \errmessage{(Inkscape) Color is used for the text in Inkscape, but the package 'color.sty' is not loaded}%
    \renewcommand\color[2][]{}%
  }%
  \providecommand\transparent[1]{%
    \errmessage{(Inkscape) Transparency is used (non-zero) for the text in Inkscape, but the package 'transparent.sty' is not loaded}%
    \renewcommand\transparent[1]{}%
  }%
  \providecommand\rotatebox[2]{#2}%
  \newcommand*\fsize{\dimexpr\f@size pt\relax}%
  \newcommand*\lineheight[1]{\fontsize{\fsize}{#1\fsize}\selectfont}%
  \ifx\svgwidth\undefined%
    \setlength{\unitlength}{1122.5bp}%
    \ifx\svgscale\undefined%
      \relax%
    \else%
      \setlength{\unitlength}{\unitlength * \real{\svgscale}}%
    \fi%
  \else%
    \setlength{\unitlength}{\svgwidth}%
  \fi%
  \global\let\svgwidth\undefined%
  \global\let\svgscale\undefined%
  \makeatother%
  \begin{picture}(1,0.42076258)%
    \lineheight{1}%
    \setlength\tabcolsep{0pt}%
    \put(0,0){\includegraphics[width=\unitlength,page=1]{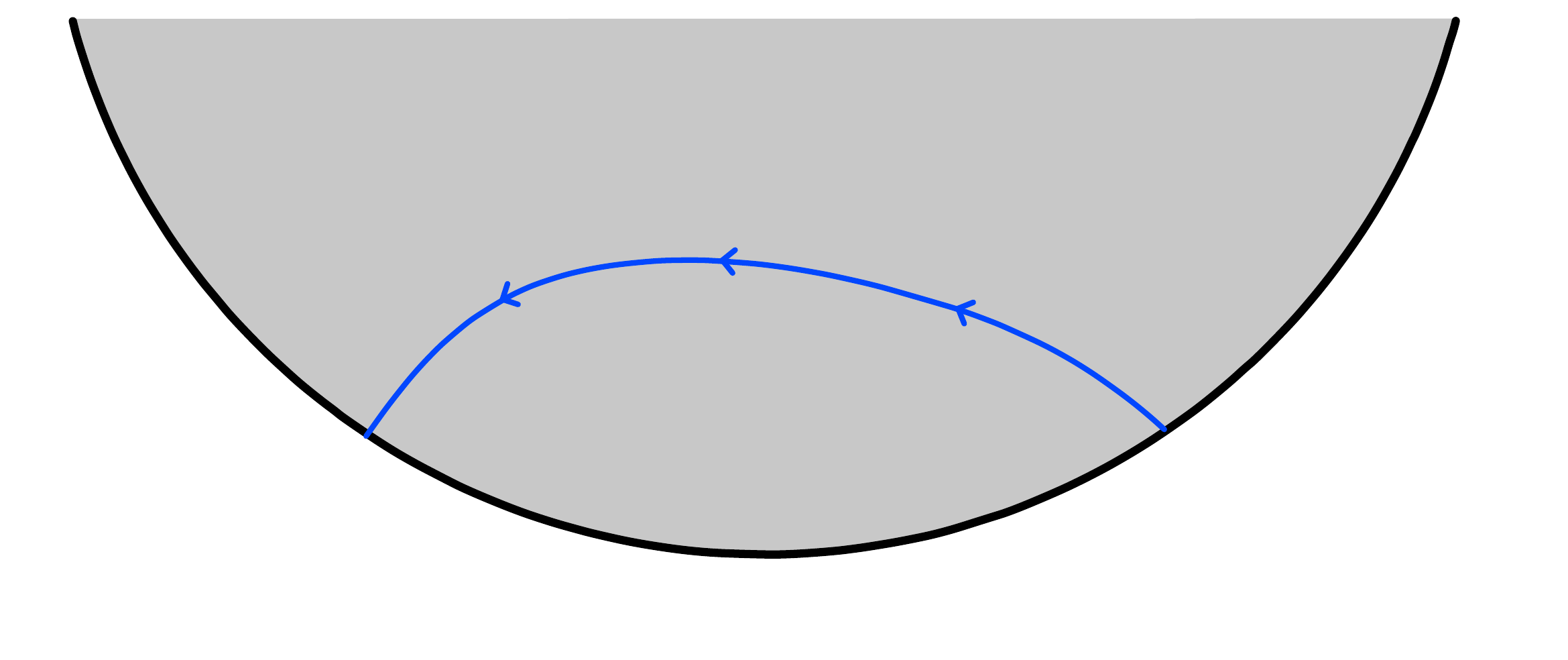}}%
    \put(0.68005969,0.30377764){\color[rgb]{1,0,0}\makebox(0,0)[lt]{\lineheight{1.25}\smash{\begin{tabular}[t]{l}$z_1$\end{tabular}}}}%
    \put(0.30901292,0.32649479){\color[rgb]{1,0,0}\makebox(0,0)[lt]{\lineheight{1.25}\smash{\begin{tabular}[t]{l}$z_2$\end{tabular}}}}%
    \put(0.56613274,0.25163029){\color[rgb]{0.00784314,0.27843137,0.99607843}\makebox(0,0)[lt]{\lineheight{1.25}\smash{\begin{tabular}[t]{l}$\exists z_w$\end{tabular}}}}%
    \put(0.00980401,0.34502129){\color[rgb]{0,0,0}\makebox(0,0)[lt]{\lineheight{1.25}\smash{\begin{tabular}[t]{l}$v_1$\end{tabular}}}}%
    \put(0.92961604,0.34313122){\color[rgb]{0,0,0}\makebox(0,0)[lt]{\lineheight{1.25}\smash{\begin{tabular}[t]{l}$v_2$\end{tabular}}}}%
    \put(0.48114477,0.03110984){\color[rgb]{0,0,0}\makebox(0,0)[lt]{\lineheight{1.25}\smash{\begin{tabular}[t]{l}$w$\end{tabular}}}}%
    \put(0,0){\includegraphics[width=\unitlength,page=2]{FXFX.pdf}}%
  \end{picture}%
\endgroup%

			\caption{Using the existence of the strands $\{z_1,z_2\}$ and Corollary~\ref{cor:cr}, there must exist a strand $z_w\in\CL(v_2,v_1)$ with $w$ to its left.}
			\label{fig:FXFX}
		\end{figure}

We now use Corollary~\ref{cor:cr} to prove a version of Theorem~\ref{thm:main-strand} using decorated permutations.

\begin{thm}\label{thm:main-perm}
	Let $Q$ be a thin dimer model with decorated permutation $\pi$. Let $v_1$ and $v_2$ be distinct boundary vertices of $Q$. The minimal path $p$ from $v_1$ to $v_2$ is arrow-defining if and only if
	\begin{enumerate}
		\item\label{GF1E} for every vertex $w\in(v_2,v_1)$, there is a number $j\in[v_2^{cl},w^{cc}]$ such that $\pi(j)\in[w^{cl},v_1^{cc}]$, and
		\item\label{GF2E} for every vertex $w\in(v_1,v_2)$, there is a number $j\in[w^{cl},v_2^{cc}]$ such that $\pi(j)\in[v_1^{cl},w^{cc}]$.
	\end{enumerate}
	In this case, 
	${{\Y}(p)}=\#\{i\in[n]\ :\ v_2^{cl}\leq i<\pi(i)\leq v_1^{cc}\textup{ is a clockwise ordering}\}$, and
	${\X}(p)=\#\{j\in[n]\ :\ v_1^{cl}\leq\pi(j)<j\leq v_2^{cc}\textup{ is a clockwise ordering}\}$.
\end{thm}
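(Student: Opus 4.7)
The plan is to translate the strand-based criterion of Theorem~\ref{thm:main-strand}, together with the reformulation of extremal connectedness given by Corollary~\ref{cor:cr}, directly into combinatorial conditions on the decorated permutation $\pi$.

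First I would identify the sets $\CL(v_2,v_1)$ and $\CC(v_2,v_1)$ with subsets of $[n]$ indexed by $\pi$. Since the strand whose start is strand-vertex $i$ ends at strand-vertex $\pi(i)$, the definition of a clockwise strand between $v_2$ and $v_1$ says that
\[\CL(v_2,v_1)=\{i\in[n]\ :\ v_2^{cl}\leq i<\pi(i)\leq v_1^{cc}\textup{ is a clockwise ordering}\},\]
and symmetrically
\[\CC(v_2,v_1)=\{j\in[n]\ :\ v_1^{cl}\leq\pi(j)<j\leq v_2^{cc}\textup{ is a clockwise ordering}\}.\]
Since Theorem~\ref{thm:main-strand} states that ${\Y}(p)=|\CL(v_2,v_1)|$ and ${\X}(p)=|\CC(v_2,v_1)|$, this immediately yields the claimed counting formulas for the relation numbers.

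Second, by Theorem~\ref{thm:main-strand}, the path $p$ is arrow-defining if and only if both $\CL(v_2,v_1)$ and $\CC(v_2,v_1)$ are $(v_1,v_2)$-extremally connected, and by Corollary~\ref{cor:cr} this is equivalent to the following two conditions: every $w\in(v_2,v_1)$ is to the left of some strand of $\CL(v_2,v_1)$, and every $w\in(v_1,v_2)$ is to the right of some strand of $\CC(v_2,v_1)$. Translating the first of these, I would verify that the boundary vertex $w$, whose adjacent strand-vertices are $w^{cc}=w-1$ and $w^{cl}=w$, lies on the left side of the chord realizing a clockwise strand from strand-vertex $i$ to strand-vertex $\pi(i)$ precisely when both $w^{cc}$ and $w^{cl}$ lie on the arc from $i$ clockwise to $\pi(i)$, i.e.\ when $i\leq w^{cc}$ and $w^{cl}\leq\pi(i)$ (which also allows the boundary cases $i=w^{cc}$ and $\pi(i)=w^{cl}$, since the strand then peels into the interior immediately past $w$). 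Combined with the constraints $v_2^{cl}\leq i$ and $\pi(i)\leq v_1^{cc}$ defining $\CL(v_2,v_1)$, this yields exactly condition~\eqref{GF1E}. The dual argument for $\CC(v_2,v_1)$ yields condition~\eqref{GF2E}.

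The main obstacle is the geometric step identifying "$w$ is to the left of the strand" with the combinatorial inequalities $i\leq w^{cc}$ and $w^{cl}\leq\pi(i)$. This requires being careful about the orientation conventions (both of the disk and of the strand walked from $i$ to $\pi(i)$), and in particular handling the edge cases where a strand endpoint coincides with a strand-vertex adjacent to $w$. Since this is a purely local picture — a chord whose endpoints lie within the arc $[v_2^{cl},v_1^{cc}]$, with the boundary vertex $w$ and its two adjacent strand-vertices nearby — the verification reduces to inspecting which boundary vertices are separated from the arc containing $v_1$ and $v_2$ by the chord. Once this translation is pinned down, the theorem follows by combining the two equivalences with the cardinality formulas from the first step.
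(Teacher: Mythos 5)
Your proposal is correct and follows the same route as the paper: combine Theorem~\ref{thm:main-strand} (arrow-defining $\iff$ both $\CL(v_2,v_1)$ and $\CC(v_2,v_1)$ are $(v_1,v_2)$-extremally connected, with relation numbers equal to the cardinalities of these sets) with Corollary~\ref{cor:cr} (extremal connectedness $\iff$ every intermediate vertex is to the left/right of some strand in the relevant set), and then translate the geometric ``to the left of a strand'' condition and the membership conditions for $\CL$ and $\CC$ into the stated inequalities in $\pi$. The paper asserts that this translation is immediate; you flag the left/right translation as the one step needing care and outline the correct chord-separation picture, including the boundary cases $j = w^{cc}$ and $\pi(j) = w^{cl}$, which is the only place a reader might pause. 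In short, same decomposition, same key lemmas, with you supplying a bit more explicit detail on the final combinatorial translation.
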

We remark that~\eqref{GF1E} states precisely that for any vertex $w$ clockwise of $v_2$ and counter-clockwise of $v_1$, there is some strand $z$ such that $v_1$ and $v_2$ are to the right of $z$, but $w$ is to the left of $z$. Condition~\eqref{GF2E} may be interpreted symmetrically.
\begin{proof}
	It is immediate that~\eqref{GF1E} is equivalent to Corollary~\ref{cor:cr}~\eqref{gf1} and that~\eqref{GF2E} is equivalent to Corollary~\ref{cor:cr}~\eqref{gf2}.
	Then Corollary~\ref{cor:cr} and the first part of Theorem~\ref{thm:main-strand} shows that the minimal path $p$ from $v_1$ to $v_2$ is arrow-defining if and only if~\eqref{GF1E} and~\eqref{GF2E} both hold.

	Moreover, note that some $i\in[n]$ satisfies $v_2^{cl}\leq i<\pi(i)\leq v_1^{cc}$ if and only if the strand $z_i$ is in $\CL(v_2,v_1)$, and some $j\in[n]$ satisfies $v_1^{cl}\leq\pi(j)<j\leq v_2^{cc}$ if and only if $z_j\in\CC(v_2,v_1)$.
	Then applying the second part of Theorem~\ref{thm:main-strand} yields the desired values of ${\Y}(p)$ and ${\X}(p)$.
\end{proof}

We remark that if $v_2+v_1$, the interval $(v_2,v_1)$ is empty, hence condition~\eqref{GF1E} of Theorem~\ref{thm:main-perm} is vacuous and one must only check condition~\eqref{GF2E} in order to verify that there is an arrow-defining path between $v_1$ and $v_2$.
Similarly, if $v_1+1=v_2$ then condition~\eqref{GF2E} is vacuous and one must only check condition~\eqref{GF1E}.

One may see that the calculations of Example~\ref{ex:2} work based only on the start and end points of the strands, hence may be done by thinking only of the decorated permutation.

\subsection{Grassmann Necklaces}

Grassmann necklaces~\cite{OPSZ,ZPostnikov} are another positroid cryptomorphism.
In this section, we show that the arrow-defining paths and relation numbers of the boundary algebra of a positroid also admit a nice interpretation in terms of the Grassmann necklace of the positroid.

\begin{defn}[{\cite[Definition 16.1]{ZPostnikov}}]
	A \emph{Grassmann necklace} of type $(k,n)$ is a sequence $\mathcal I=(I_1,\dots,I_n)$ of subsets $I_i\in{[n]\choose k}$ such that, for $i\in[n]$, we have $I_{i+1}\supseteq I_i\backslash\{i\}$, where $i+1$ is calculated modulo $n$.
\end{defn}
In other words, either $I_{i+1}=I_i$ or $I_{i+1}$ is obtained from $I_i\ni i$ by deleting $i$ and adding in a different element.
\begin{defn}
	Let $Q$ be a dimer model. Its associated {Grassmann necklace} is the sequence $\mathcal I_Q=(I_1,\dots,I_n)$ of $k$-subsets of $[n]$ defined by
	\[I_i=\{j\in[n]\ :\ \text{the boundary vertex }i\text{ is to the right of the strand }z_j\text{ starting at the strand-vertex }j\}.\]
\end{defn}

If $Q$ is thin, then no strand starts and ends at the same strand-vertex, hence $I_i\neq I_{i+1}$ for all $i$.
The following corollary is a translation of Theorem~\ref{thm:main-perm} to the language of Grassmann necklaces.

\begin{cor}\label{cor:main-neck}
	Let $Q$ be a thin dimer model with Grassmann necklace $\mathcal I_Q=(I_1,\dots,I_n)$.
	Let $v_1$ and ${v_2}$ be distinct boundary vertices of $Q$. There is an arrow-defining path $p$ from $v_1$ to $v_2$ if and only if
	\begin{enumerate}
		\item\label{FG1} for any vertex $w\in(v_2,v_1)$, we have $I_w\not\supseteq I_{v_1}\cap I_{v_2}$, and
		\item\label{FG2} for any vertex $w\in(v_1,v_2)$, we have $I_w\not\subseteq I_{v_1}\cup I_{v_2}$.
	\end{enumerate}
	The relation numbers are calculated as ${\Y}(p)=\big|[v_2,v_1)\cap I_{v_1}\cap I_{v_2}\big|$ and ${\X}(p)=\big|[v_1,v_2)\backslash\big(I_{v_1}\cup I_{v_w}\big)\big|$.
\end{cor}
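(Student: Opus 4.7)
The plan is to derive Corollary~\ref{cor:main-neck} as a direct translation of Theorem~\ref{thm:main-perm} using the defining property of the Grassmann necklace: $j\in I_i$ if and only if the boundary vertex $i$ lies to the right of the strand $z_j$. Since $Q$ is thin and hence its decorated permutation $\pi$ is connected (Lemma~\ref{lem:con}), for $n\geq 2$ we may further assume $\pi$ has no fixed points — a fixed point $\pi(i)\in\{\bar i,\underline i\}$ would give the cyclically contiguous stable singleton $\{i\}$ and contradict connectedness — which sidesteps any degeneracies involving loop strands.

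First I would translate condition~\eqref{GF1E}. For a fixed $w\in(v_2,v_1)$, the indices $j\in[v_2^{cl},w^{cc}]$ with $\pi(j)\in[w^{cl},v_1^{cc}]$ are precisely the strand-vertices whose strand $z_j$ starts before $w$ and ends after $w$ in the clockwise arc $[v_2^{cl},v_1^{cc}]$. Unpacking the convention that the right side of $z_j$ is the clockwise arc from $\pi(j)$ back around to $j$, such a strand has both $v_1$ and $v_2$ on its right (so $j\in I_{v_1}\cap I_{v_2}$) and $w$ on its left (so $j\notin I_w$). Thus~\eqref{GF1E} says exactly that for every $w\in(v_2,v_1)$ there exists $j\in(I_{v_1}\cap I_{v_2})\setminus I_w$, which is equivalent to $I_w\not\supseteq I_{v_1}\cap I_{v_2}$; this is~\eqref{FG1}. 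Applying the same geometric analysis with strand orientation reversed, condition~\eqref{GF2E} translates to the existence of $j\in I_w\setminus(I_{v_1}\cup I_{v_2})$ for each $w\in(v_1,v_2)$, i.e.\ $I_w\not\subseteq I_{v_1}\cup I_{v_2}$, which is~\eqref{FG2}.

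Next I would verify the relation numbers. By Theorem~\ref{thm:main-perm}, ${\Y}(p)=\#\{i : v_2^{cl}\leq i<\pi(i)\leq v_1^{cc}\textup{ clockwise}\}$ counts strands $z_i\in\CL(v_2,v_1)$. I would argue by cases that, identifying the strand-vertex and boundary-vertex numberings on $[v_2,v_1)=\{v_2,\dots,v_1-1\}$, a strand-vertex $i\in[v_2,v_1)$ lies in $I_{v_1}\cap I_{v_2}$ exactly when $z_i$ stays in the arc $[v_2^{cl},v_1^{cc}]$ and moves clockwise: a strand exiting the arc places $v_1$ and $v_2$ on opposite sides, while a strand with both endpoints in the arc but with $\pi(i)$ clockwise \emph{before} $i$ places both $v_1$ and $v_2$ on its left. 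This yields the bijection proving ${\Y}(p)=|[v_2,v_1)\cap I_{v_1}\cap I_{v_2}|$. The formula for ${\X}(p)$ follows from the identical argument applied to $\CC(v_2,v_1)$, with clockwise and counter-clockwise orientations swapped, giving ${\X}(p)=|[v_1,v_2)\setminus(I_{v_1}\cup I_{v_2})|$ (one checks that counter-clockwise strands with both endpoints in $[v_1^{cl},v_2^{cc}]$ are precisely those strands on whose left both $v_1$ and $v_2$ lie, contributing strand-vertices in $[v_1,v_2)$ outside $I_{v_1}\cup I_{v_2}$).

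The only real obstacle is careful bookkeeping of the $v^{cl}/v^{cc}$ conventions together with strand orientation, and in particular verifying once and for all that ``$v$ to the right of $z_j$'' coincides with ``$v$ in the open clockwise arc from $\pi(j)$ to $j$.'' Once that dictionary is in place, the corollary is a pure unpacking of Theorem~\ref{thm:main-perm}; there is no new geometric or algebraic content beyond that result.
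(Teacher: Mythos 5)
Your proof is correct and takes essentially the same route as the paper: a direct translation of Theorem~\ref{thm:main-perm} using the dictionary that $j\in I_i$ if and only if boundary vertex $i$ lies to the right of the strand $z_j$. The paper is more terse, treating the case analysis on strand endpoints as immediate; your explicit unpacking of the clockwise-arc convention and the aside about fixed points (which is correct for connected $\pi$ with $n\geq 2$, though the paper does not need to invoke it) do not change the substance.
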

\begin{proof}
	Fix $w\in(v_2,v_1)$.
	For any $j\in[n]$, we have that $j\in I_{v_1}\cap I_{v_2}$ but $j\not\in I_w$ if and only if $j\in[v_2^{cl},w^{cc}]$ and $\pi(j)\in[w^{cl},v_1^{cc}]$.
	Hence, the condition that $w$ is to the left of some strand starting in $[v_2^{cl},w^{cc}]$ and ending in $[w^{cl},v_1^{cc}]$ (i.e., condition~\eqref{GF1E} of Theorem~\ref{thm:main-perm}) is equivalent to the condition that there is some element of $I_{v_1}\cap I_{v_2}$ which is not an element of $I_w$ (i.e., condition~\eqref{FG1} of Corollary~\ref{cor:main-neck}).
	One may similarly show that condition~\eqref{GF2E} of Theorem~\ref{thm:main-perm} is equivalent to condition~\eqref{FG2} of Corollary~\ref{cor:main-neck}.
	Then applying Theorem~\ref{thm:main-perm} shows that~\eqref{FG1} and~\eqref{FG2} are equivalent to the existence of an arrow-defining path $p$ from $v_1$ to $v_2$.

	The same theorem states that ${\Y}(p)$ is the number of integers $i$ such that $v_2^{cl}\leq i<\pi(i)\leq v_1^{cc}$ is a clockwise order. In fact, this condition on $i$ is equivalent to the condition that $i\in I_{v_1}\cap I_{v_2}\cap[v_2,v_1)$. This shows the statement about ${\Y}(p)$; the statement about ${\X}(p)$ is proved similarly.
\end{proof}

\begin{example}
	We revisit the dimer model of Example~\ref{ex:2}.
	The Grassmann necklace is given by
	\[\left(123,\ 234,\ 134,\ 145,\ 156,\ 126\right),\]
	where we write, for example, $123$ to denote $\{1,2,3\}$ for readability.
	Let $v_1=3$ and $v_2=1$; we have 
	$I_3=134$ and $I_1=123$. The intersection $I_3\cap I_1=13$ is not contained in $I_2=234$.
	The union $I_1\cup I_3=1234$ does not contain $I_4$, $I_5$, or $I_6$. Hence, Corollary~\ref{cor:main-neck} shows that there is an arrow-defining path $p$ from 3 to 1. Its relation number ${\Y}(p)$ is $\big|\{1,2\}\cap(I_1\cap I_3)\big|=\big|\{1\}\big|=1$, hence $[p(xy)]=[y_2^2]$.

	On the other hand, if we take (for example) $v_1=4$ and $v_2=1$, we see that $I_4\cap I_1=\{1\}$ is contained in $I_3$, hence Corollary~\ref{cor:main-neck}~\eqref{FG1} shows that there is no arrow-defining path from 4 to 1.
\end{example}

\subsection{Grassmannian Relations}

Baur, King, and Marsh showed in~\cite{ZBKM} that if $Q$ comes from a Postnikov $(k,n)$-diagram $D$ (i.e., if the decorated permutation of $Q$ sends vertex $i$ to $i-k$ for each $i$), then the relations $[xy]=[yx]$ and $[x^k]=[y^{n-k}]$ hold. In~\cite[Proposition 3.6]{CKP} it was shown that these same relations hold for general thin dimer models in disks. The former relation $[xy]=[yx]$ was explicitly shown in Lemma~\ref{x-y-paths-thin}~\eqref{62}. In this subsection, we provide a more direct proof of the latter relation $[x^k]=[y^{n-k}]$ using the theory of relation numbers and rightmost paths developed above.

\begin{lemma}\label{lem:grass-relations-k}
	If $Q$ is a thin dimer model, then the relation $[x^k]=[y^{n-k}]$ holds in $A_Q$, where $k$ is the number of noninversions of the decorated permutation $\pi_Q$.
\end{lemma}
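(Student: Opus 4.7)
The plan is to reduce the equality $[x^k] = [y^{n-k}]$ in $A_Q$ to the family of per-vertex identities $[x_i^k] = [y_{i-1}^{n-k}]$ for each $i \in [n]$, since only the compatible compositions survive when expanding the sums $x = \sum_i x_i$ and $y = \sum_i y_i$, and these are the summands starting at $i$. By the definition of the relation number ${\Y}$, each such identity is equivalent to ${\Y}(x_i^k) = 0$.

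To compute ${\Y}(x_i^k)$, I would express the relevant c-values via Theorem~\ref{thm:rightmost-relation-num}. Let $r$ be the minimal rightmost path from $i$ to $i+k$; then $C(r) = 0$ and ${\Y}(r) = |\CL(i+k, i)|$ by Theorem~\ref{thm:rightmost-relation-num}~\eqref{b2}. Unpacking ${\Y}(r) = C(y_{i-1}^{n-k}) - C(r)$ yields $C(y_{i-1}^{n-k}) = |\CL(i+k, i)|$. Dually, applying the leftmost analog to the minimal leftmost path from $i$ to $i+k$ gives $C(x_i^k) = |\CC(i+k, i)|$. Therefore
\[{\Y}(x_i^k) = C(y_{i-1}^{n-k}) - C(x_i^k) = |\CL(i+k, i)| - |\CC(i+k, i)|,\]
and the problem reduces to the combinatorial identity $|\CL(i+k, i)| = |\CC(i+k, i)|$.

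To prove this identity, I would partition the $n$ strands into six classes according to how their endpoints sit relative to the two arcs determined by boundary vertices $i$ and $i+k$: the short arc $[i, i+k-1]$ (length $k$) and the long arc $[i+k, i-1]$ (length $n-k$). Let $A, B$ count strands with both endpoints in the short arc oriented clockwise and counter-clockwise, respectively (so $B = \CC(i+k, i)$); let $C, D$ be analogous for the long arc (so $C = \CL(i+k, i)$); and let $E, F$ count strands starting in the short (resp. long) arc and ending in the long (resp. short) arc. The identity $|A| + |B| + |E| = k$ holds because the short arc contains $k$ strand-vertices, each the start of exactly one strand. Next, a case-check shows that $z_j \in I_i$ (i.e., $b_i$ lies to the right of $z_j$) precisely when $z_j$ belongs to $A \cup C \cup E$, which yields $|A| + |C| + |E| = |I_i| = k$ by the Grassmann necklace property. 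Subtracting the two equations gives $|B| = |C|$, completing the proof.

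The main technical care will be in the case-check underlying the characterization $I_i = A \cup C \cup E$: for each of the six classes one must track the clockwise arc from strand-vertex $j$ to $\pi(j)$ and determine whether $b_i$ lies to its right. The critical situations are $B$ and $D$, where both endpoints lie in a single arc but the strand is oriented so that the clockwise arc from $j$ to $\pi(j)$ wraps around the whole disk, necessarily passing both $b_i$ and $b_{i+k}$ on the same side.
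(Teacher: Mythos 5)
Your proof is correct and follows essentially the same route as the paper: reduce $[x^k]=[y^{n-k}]$ to the combinatorial identity $|\CL(i+k,i)|=|\CC(i+k,i)|$ via the relation-number machinery, then prove that identity by double-counting strands. The only cosmetic differences are that you count the $k$ strands starting in the short arc and invoke $|I_i|=k$ via the Grassmann necklace, whereas the paper counts the $n-k$ strands starting in the long arc against the $n-k$ inversions of $\pi$ directly (complementary counts of the same six-way partition), and that you cite Theorem~\ref{thm:rightmost-relation-num} and its dual rather than Theorem~\ref{thm:main-strand}, which is if anything the more careful citation since Theorem~\ref{thm:main-strand}'s relation-number formulas are stated under an arrow-defining hypothesis that need not hold here.
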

\begin{proof}
	We wish to show that $[x_j^k]=[y_n^{n-k}]$ for all $j\in[n]$, where $k$ is the number of noninversions of $\pi$.
	In fact, it is sufficient to show that the relation $[x_1^k]=[y_{n}^{n-k}]$ holds, since the number of (non)inversions does not change upon cyclically shifting the vertex labels.
	For readability, we thus perform all calculations for $j=1$, even though they do not fundamentally differ for arbitrary $j\in[n]$.

	Let $V^l$ be the set of strand-vertices $[1^{cl},(k+1)^{cc}]$ and let $V^r$ be the set of strand-vertices $[(k+1)^{cl},1^{cc}]$.
	Let $S_{lr}$ be the set of strands starting at a vertex of $V^l$ and ending at a vertex of $V^r$. Let $S_{rl}$ be the set of strands starting at a vertex of $V^r$ and ending at a vertex of $V^l$.

	By counting the strands beginning in $V^r$ (of which there are $n-k$), one observes that 
		$n-k=|S_{rl}|+|\CC(1,k+1)|+|\CL(k+1,1)|$.
	By noting that the inversions of $\pi$ (of which there are $n-k$) are precisely the beginning strand-vertices of strands of $S_{rl}$, $\CC(k+1,1)$, and $\CC(1,k+1)$, one observes that 
	$n-k=|S_{rl}|+|\CC(1,k+1)|+|\CC(k+1,1)|$.
	Subtracting these descriptions of $n-k$, we see that
	\[0=(n-k)-(n-k)=|\CL(k+1,1)|-|\CC(k+1,1)|.\]
	Let $p$ be the minimal path from $1$ to $k+1$. Theorem~\ref{thm:main-strand} states that ${\X}(p)=|\CC(k+1,1)|$ and ${\Y}(p)=|\CL(k+1,1)|$ -- the above calculation shows that these are the same, hence ${\X}(p)={\Y}(p)$. In other words,
	\[[x_1^k]=[p(xy)^{{\X}(p)}]=[p(xy)^{{\Y}(p)}]=[y_n^{n-k}],\]
	completing the proof.
\end{proof}
\begin{remk}
	As in~\cite[Definition 2.5]{CKP}, the value $k$ (referred also as the \textit{type} of $Q$) may alternatively be calculated as
	\[\#\{\text{counter-clockwise faces}\}-\#\{\text{clockwise faces}\}+\#\{\text{boundary arrows in clockwise faces}\}.\]
	Moreover, when $Q$ comes from a $\Le$-diagram $D$~\cite{ZPostnikov}, $k$ is the height of $D$.
\end{remk}

We call the relations $[xy]=[yx]$ and $[x^k]=[y^{n-k}]$
the \textit{Grassmannian relations} of the dimer algebra $A_Q$.
The permutation $256134$ (in one-line notation) of the model of Example~\ref{ex:2} (Figure~\ref{fig:ex2}) has three noninversions, hence $k=3$ and $[x^3]=[y^3]$.

\section{Describing Boundary Algebras}\label{sec:dba}

We use the results of the previous sections to calculate the boundary algebra of a thin dimer model with a given connected decorated permutation.
Let $\pi$ be a connected decorated permutation with $k$ noninversions and let $Q$ be a thin dimer model whose decorated permutation is $\pi$, which exists by Lemma~\ref{lem:perm-to-dimer}.
Recall the definition of the \textit{boundary algebra} $B_Q$ of $Q$ from Definition~\ref{defn:ba} and the definition of an \textit{arrow-defining path} from
Definition~\ref{def:arrow-defining-paths}.
Since arrow-defining paths are minimal, there is at most one arrow-defining path up to path-equivalence from $v_1$ to $v_2$ for any vertices $v_1,v_2\in Q$. It follows from Lemma~\ref{lem:arrow-defining-defines-arrow} that arrow-defining paths correspond to arrows of the Gabriel quiver of the boundary algebra $B_Q$.
We say that an arrow of the boundary algebra (equivalently, an arrow-defining path) is \textit{nonadjacent} if its source and target vertices are not (cyclically) adjacent to each other.

Let $Q_{k,n}$ be the \emph{circle quiver} on $n$ vertices such that each $i$ has a single arrow $x_i$ to $i+1$ and a single arrow $y_{i-1}$ to $i-1$ modulo $n$. Baur, King, and Marsh showed in~\cite[Corollary 10.4]{ZBKM} that the boundary algebra of a thin $(k,n)$-dimer model (i.e., one whose decorated permutation sends $j\mapsto j-k$) is isomorphic to $Q_{k,n}/I_{k,n}$, for $I_{k,n}$ the ideal generated by the Grassmannian relations $[xy]=[yx]$ and $[x^k]=[y^{n-k}]$ of Lemma~\ref{x-y-paths-thin} and Lemma~\ref{lem:grass-relations-k} is the boundary algebra of the Grassmannian $\Gr({k,n})$.

Let $Q^{\pi}_{\circ}$ be the quiver obtained by starting with $Q_{k,n}$ and drawing an arrow $\alpha_p:t(p)\to h(p)$ for every (equivalence class of a) nonadjacent arrow-defining path $[p]$. Theorem~\ref{thm:main-perm} shows how the (nonadjacent) arrow-defining paths and their relation numbers may be calculated from the strand diagram, justifying the notation. Let $I^{\pi}_{\circ}$ be the cancellative closure of the ideal generated by sum of the Grassmannian relations and the \emph{nonadjacent relations}:
\[I^{\pi}_{\circ}=\big\langle\{[xy]-[yx],\ [x^k]-[y^{n-k}],\ \sum_{p\in\textup{NBP}(Q)}[y_{t(p)-1}^{\reach_y(p)}]-[p(xy)^{{\Y}(p)}]\}\big\rangle^\bullet,\]
where $\textup{NBP}(Q)$ is the set of nonadjacent boundary paths of $Q$ up to equivalence class. Again, Theorem~\ref{thm:main-perm} describes how to calculate the elements of $\textup{NBP}(Q)$ and their relation numbers based only on $\pi$.
By Lemma~\ref{x-y-paths-thin}, Lemma~\ref{lem:grass-relations-k}, and Theorem~\ref{thm:main-perm},
these relations hold in the dimer algebra, hence they must hold in the boundary algebra. Then the boundary algebra $B_Q$ is a quotient of $\k Q^{\pi}_{\circ}/I^{\pi}_{\circ}$. We show that, in fact, equality holds.

\begin{thm}\label{thm:boundary-algebra-main}
	Let $Q$ be a thin dimer model with decorated permutation $\pi$. Then its boundary algebra $B_Q$ is isomorphic to $B^\pi_\circ:=\k Q^{\pi}_{\circ}/I^{\pi}_{\circ}$.
\end{thm}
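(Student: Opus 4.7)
The plan is to construct the natural surjection $\bar\phi : B_\circ^\pi \to B_Q$ and prove it is an isomorphism by exhibiting a spanning set of $B_\circ^\pi$ that maps bijectively onto the thinness basis of $B_Q$.

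First I define $\phi : \k Q_\circ^\pi \to A_Q$ on generators by $\phi(x_i) = [x_i]$, $\phi(y_i) = [y_i]$, and $\phi(\alpha_p) = [p]$, which is well-defined because an arrow-defining path has a unique equivalence class. The image lies in $B_Q = eA_Qe$. The generators of $I_\circ^\pi$ lie in $\ker\phi$ by Lemma~\ref{x-y-paths-thin}, Lemma~\ref{lem:grass-relations-k}, and the defining relation $[y_{t(p)-1}^{\reach_y(p)}] = [p(xy)^{{\Y}(p)}]$. Since $A_Q$ is cancellative (Theorem~\ref{thm:cancellative-closure}), $\ker\phi$ is itself a cancellative ideal: if $[\phi(apb)] = [\phi(aqb)]$, then $[\phi(p)] = [\phi(q)]$. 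Hence $I_\circ^\pi \subseteq \ker\phi$ by minimality of the cancellative closure, and $\phi$ factors through the required map $\bar\phi : B_\circ^\pi \to B_Q$. Surjectivity follows because every minimal path $r_{v_1,v_2}$ of $A_Q$ can be decomposed into arrow-defining subpaths: any direct-but-not-arrow-defining subpath of a chosen representative is replaced by an equivalent path passing through additional boundary vertices, and this refinement terminates because each refined segment has strictly fewer arrows.

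For injectivity, the thinness condition gives a $\k$-basis $\mathcal B := \{[(xy)^m r_{v_1,v_2}] : v_1,v_2 \in [n],\ m \geq 0\}$ of $B_Q$, where $r_{v_1,v_2}$ is the unique equivalence class of minimal paths from $v_1$ to $v_2$. For each $(v_1,v_2)$ I would fix one decomposition $p_1 \cdots p_s$ of a chosen representative of $r_{v_1,v_2}$ into arrow-defining subpaths and lift it to the path $\bar r_{v_1,v_2} := \bar p_1 \cdots \bar p_s$ in $Q_\circ^\pi$; I then set $\rho_{v_1,v_2,m} := [(xy)^m \bar r_{v_1,v_2}] \in B_\circ^\pi$. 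Then $\bar\phi(\rho_{v_1,v_2,m}) = [(xy)^m r_{v_1,v_2}] \in \mathcal B$, so $\bar\phi$ maps $\{\rho_{v_1,v_2,m}\}$ injectively onto $\mathcal B$. If the family $\{\rho_{v_1,v_2,m}\}$ spans $B_\circ^\pi$, it is forced to be a basis (linearly independent because its image is), making $\bar\phi$ an isomorphism.

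The main obstacle is the spanning claim: every path $p$ in $Q_\circ^\pi$ from $v_1$ to $v_2$ equals $\rho_{v_1,v_2,m_p}$ in $B_\circ^\pi$ for $m_p := C(\phi(p))$. My strategy is to establish the identity $[p\,(xy)^M] = [\bar r_{v_1,v_2}\,(xy)^{m_p + M}]$ in $B_\circ^\pi$ for some $M$, then cancel the single path $f_{v_2}^M = (x_{v_2} y_{v_2})^M$ on the right using the cancellativity built into $I_\circ^\pi$. To establish this identity, I would repeatedly apply the nonadjacent relations $[\alpha_q (xy)^{{\Y}(q)}] = [y_{t(q)-1}^{\reach_y(q)}]$ to eliminate every $\alpha$-arrow from both sides, using that $[xy]$ is central to move $(xy)^{{\Y}(q)}$ adjacent to each $\alpha_q$. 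After this substitution, both sides become words in the $x_i, y_i$ alone (times central $(xy)$-factors). The Grassmannian relations $[xy] = [yx]$ and $[x^k] = [y^{n-k}]$ then bring any such word from $v_1$ to $v_2$ into a normal form $[(xy)^N y_{v_1-1}^{\delta}]$ (or $[(xy)^N x_{v_1}^{\epsilon}]$) determined entirely by source, target, and $C$-value, exactly as in the Baur--King--Marsh algebra $\k Q_{k,n}/I_{k,n}$; since $\phi(p)$ and $\bar\phi(\bar r_{v_1,v_2}(xy)^{m_p})$ agree in $B_Q$, their $C$-values agree, and the two normal forms coincide. Verifying that this reduction is compatible across the choices of which $\alpha$-arrows to eliminate (and in what order) is the main technical difficulty and requires carefully tracking absorbed $(xy)$-powers throughout the substitution, but once the identity at level $(xy)^M$ is obtained, the cancellative closure structure of $I_\circ^\pi$ delivers the conclusion $[p] = \rho_{v_1,v_2,m_p}$.
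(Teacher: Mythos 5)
Your proposal is correct in outline but takes a genuinely different route from the paper. The paper argues by contradiction: since $B_Q$ is \emph{a priori} a quotient $\k Q^\pi_\circ/J$ with $J\supseteq I^\pi_\circ$, it picks a putative commutation relation $[p]-[q]\in J\setminus I^\pi_\circ$, right-multiplies by a suitable power of $(xy)$, eliminates the $\alpha$-arrows using the nonadjacent relations, reduces to a relation $[x_j^b]-[y_{j-1}^c]$ or $[x_j^by^c]-[e_j]$ among $x,y$-words, and shows any such relation already lies in $I^\pi_\circ$ (else a nonconstant path would equal a constant one in $A_Q$). You instead build the map $\bar\phi:B^\pi_\circ\to B_Q$ explicitly and exhibit a spanning family mapping bijectively onto the thinness basis of $B_Q$. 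Both hinge on the same mechanism (eliminate $\alpha$'s, then use the Grassmannian relations to obtain a Baur--King--Marsh normal form for $x,y$-words), but the paper's contradiction framing avoids having to show a well-defined normal form on $B^\pi_\circ$ itself: it only ever manipulates a single relation, so it never has to verify that different elimination orders agree. Your approach is somewhat more conceptual (it hands you a basis of $B^\pi_\circ$ as a byproduct), whereas the paper's is shorter.

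A few gaps you should close. (i) Surjectivity: the termination claim ``each refined segment has strictly fewer arrows'' is false, since basic morphs can lengthen a path; what does decrease when you cut at a new boundary vertex is the cyclic reach $\reach_y$ (or, equivalently, the number of boundary vertices strictly between source and target), and that gives termination. (ii) You repeatedly invoke centrality of $[xy]$ in $B^\pi_\circ$. This is true, but it is \emph{not} just a restatement of $[xy]=[yx]$; you must argue that $[xy]$ commutes with the nonadjacent arrows $\alpha_p$ as well. It does follow: from $[\alpha_p(xy)^{{\Y}(p)}]=[y^{\reach_y(p)}]$ and the fact that $[xy]$ commutes with any $x,y$-word, one gets $[\alpha_p(xy)^{{\Y}(p)+1}]=[(xy)\alpha_p(xy)^{{\Y}(p)}]$, and then cancelling $(xy)^{{\Y}(p)}$ on the right (permitted since $I^\pi_\circ$ is cancellative) yields $[\alpha_p(xy)]=[(xy)\alpha_p]$. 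But this deduction must appear explicitly, since centrality is only clear \emph{a posteriori}, after the isomorphism is established. (iii) The normal-form step needs a remark that the ideal generated by the two Grassmannian relations in $\k Q_{k,n}$ is already cancellative (being the kernel of the Baur--King--Marsh boundary algebra), so normal forms of $x,y$-words are reached using only those relations, which also live in $I^\pi_\circ$; you should also note that once both reduced $x,y$-words share source, target, and $C$-value they coincide, which sidesteps the ``compatibility across elimination orders'' worry you flagged --- the result of any correct elimination is forced to be the unique normal form.
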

\begin{proof}
	As explained above the theorem statement, $B_Q$ is some quotient of $\k Q^{\pi}_{\circ}/I^{\pi}_{\circ}$. Then $B_Q\cong \k Q^{\pi}_{\circ}/J$ for some set of relations $J$ containing $I^{\pi}_{\circ}$. If $B_Q\not\cong \k Q^{\pi}_{\circ}/I^{\pi}_{\circ}$, then this containment is strict and we may choose a relation of $J$ which is not in $I^{\pi}_{\circ}$. Since the relations of $A_Q$ are generated by commutation relations $[p]-[q]$ (for paths $p$ and $q$ of $Q$ with $h(p)=h(q)$ and $t(p)=t(q)$), the algebra $B_Q$ is also generated by commutation relations and we may choose a relation of the form $[p]-[q]$ for paths $p$ and $q$ of $Q^{\pi}_{\circ}$.
    
    We perform a series of simplifications. First, we show that $p$ and $q$ may be assumed to consist only of $x$'s and $y$'s. First, the boundary path $p$ may be taken to be of the form \[p=x_{t(p)}^{r_1}y^{s_1}a_1x^{r_2}y^{s_2}a_2\dots a_{t-1}x^{r_t}y^{r_t}a_tx^{r_{t+1}}y^{s_{t+1}}\] for nonadjacent arrow-defining paths $a_i$ and $r_i,s_j\geq0$. We similarly may take $q$ to be of the form 
    \[q=x_{t(p)}^{r'_1}y^{s'_1}a'_1x^{r'_2}y^{s'_2}a'_2\dots a'_{t'-1}x^{r'_{t'}}y^{r'_{t'}}a'_{t'}x^{r'_{t'+1}}y^{s'_{t'+1}}.\]
	Without loss of generality suppose that $M:=\sum_{i=1}^t {\Y}(a_i)$ is greater than or equal to $\sum_{i=1}^{t'}{\Y}(a'_i)$. We now multiply both sides of the relation $[p]-[q]$ by $[(xy)^M]$. 
    Since $J$ and $I^{\pi}_{\circ}$ both obey the cancellation property,
    \begin{align*}
	    [p]-[q]\in J&\iff [px^{M}]-[qx^{M}]\in J\\
        \text{and}\\
	    [p]-[q]\not\in I^{\pi}_{\circ}&\iff [px^M]-[qy^M]\not\in I^{\pi}_{\circ}.
    \end{align*}
	We then see that $[px^M]-[qy^M]$ is a relation in $J\backslash I^{\pi}_{\circ}$.
	Using that $[x]$'s and $[y]$'s commute (by Lemma~\ref{x-y-paths-thin}) and using the relations $[y_{t(a_i)-1}^{\reach_y(a_i)}]-[a_i(xy)^{{\Y}(a_i)}]\in I_\circ^\pi\cap J$, we may substitute out all nonadjacent arrow-defining paths $a_i$ from the left hand side of this relation to get that $[px^M]-[x_{t(p)}^{m_1}y^{m_2}\dots x^{m_t}y^{m_t}]\in J\cap I_\circ^\pi$ for some $m_i\geq0$. Similarly, we see that $[qx^M]-[x_{t(p)}^{m'_1}y^{m'_2}\dots x^{m'_t}y^{m'_{t'}}]\in J\cap I_\circ^\pi$ for some $m'_i\geq0$. Hence, the relation $[x_{t(p)}^{m'_1}y^{m'_2}\dots x^{m'_{t'}}y^{m'_t}]-[x_{t(p)}^{m_1}y^{m_2}\dots x^{m_t}]$ is in $J\backslash I_\circ^\pi$.
	Moreover, since $x$'s and $y$'s commute (Lemma~\ref{x-y-paths-thin}), we may write this relation as $[x_j^{b_1}y^{c_1}]-[x_j^{b_2}y^{c_2}]\in J\backslash I_\circ^\pi$ for some $b_i,c_i\geq0$. By applying the Grassmannian relation $[x^k]-[y^{n-k}]\in J\cap I_\circ^\pi$, we may further assume that $0\leq b_i<k$.
    Without loss of generality suppose $b_1\geq b_2$. 

	If $c_1\geq c_2$, then cancelling $x^{b_2}$ from the left and $y^{c_2}$ from the right via the cancellation property gives that the relation \[[x_j^{b_1-b_2}y^{c_1-c_2}]-[e_j]\] is in $J$ but not $I^\pi_{\circ}$.
	Since this relation is not in $I^{\pi}_{\circ}$, the left hand side must not be constant. On the other hand, a nonconstant path may not be equivalent to a constant path in $B_Q\cong\k Q_\circ^\pi/J$, as this relation would also hold in the dimer algebra $A_Q$, so we have a contradiction.
    
	We may then suppose that $c_1\leq c_2$. The cancellation property (of $J$ and $I^\pi_\circ$) gives that the relation $[x_j^{b_1-b_2}]-[y_{j-1}^{c_2-c_1}]$ is in $J$ but not $I^\pi_\circ$.
	Since a constant path cannot be equivalent to a nonconstant path in $B_Q\cong\k Q_\circ^\pi/J$, we must have $0<b:=b_1-b_2$ and $0<c:=c_2-c_1$.
	Moreover, since $0\leq b_1<k$, we have $0\leq b<k$. We must have $b+c\geq n$ in order for the starts and ends of both sides to match, hence $c>(n-k)$. Multiplying the equation $[x_j^b]-[y_{j-1}^c]$ by $[x^{k-b}]$ gives a relation $[y_{j-1}^{n-k}]=[x_j^k]=[y_{j-1}^{c-k+b}(xy)^{k-b}]$ of $J$. Since $b+c\geq n$, we must have $c-k+b\geq n-k$, so cancelling out $y_{j-1}^{n-k}$ from the left hand side gives $[e_j]=[y_{j-1}^{c-n+b}(xy)^{k-b}]$ in $B_Q\cong\k Q_\circ^\pi/J$, a contradiction.
\end{proof}

\begin{remk}
	Theorem~\ref{thm:boundary-algebra-main} tells us that the combinatorics of the nonadjacent arrows determines the boundary algebra $B^\pi$.
	More concretely, the data of
	\begin{enumerate}
		\item the pair $(k,n)$,
		\item the set $S$ of nonadjacent arrows of $B^\pi$, and 
		\item a right relation number ${\Y}(\alpha)$ for each arrow of $s$,
	\end{enumerate}
	which is obtained in Theorem~\ref{thm:main-strand} from the permutation $\pi$, is enough to determine the boundary algebra $B^\pi$. In future works, we will give a combinatorial characterization of the forms this data may take.
\end{remk}

Theorem~\ref{thm:boundary-algebra-main} gives a presentation $\k Q^\pi_\circ/I^\pi_\circ$ of the boundary algebra of a connected positroid $\pi$. On the other hand, the ideal $I^{\pi}_{\circ}$ may not be contained in the square of the arrow ideal. In other words, the quiver $Q^{\pi}_{\circ}$ appearing in Theorem~\ref{thm:boundary-algebra-main} may not be the Gabriel quiver of $B_Q$ because certain arrows $x_i$ and $y_i$ of $Q_\circ^\pi$ may not appear in the Gabriel quiver of $B_Q$. We now account for this.

Theorem~\ref{thm:main-perm} may be used to find the adjacent paths $x_i$ and $y_i$ which are not arrow-defining based on the strand diagram.
Let $Q^{\pi}$ be the quiver obtained from $Q^{\pi}_{\circ}$ by deleting all arrows $x_i$ and $y_i$ which are not arrow-defining. In $Q^\pi$, define $x'_i$ to be the path from $i$ to $i+1$ as follows: either $x_i$ is arrow-defining and $x'_i:=x_i$, or $x_i$ is not arrow-defining and $x'_i$ is a composition of arrow-defining paths realizing $x_i$.
We define $x'=\sum_{i\in[n]}x'_i$, and we define $y'_i$ and $y'$ analogously.
We may then define an ideal $I^\pi$ of $Q^\pi$ by taking the definition for $I_\circ^\pi$ and replacing $x$ with $x'$ and $y$ with $y'$:
\[I^{\pi}=\langle\{[x'y']-[y'x'],[(x')^k]-[(y')^{n-k}],\sum_{p\in\textup{NBP}(Q)}[(y_{t(p)-1}')^{\reach_y(p)}]-[p(x'y')^{{\Y}(p)}]\}\rangle^\bullet.\]

\begin{cor}\label{cor:boundary-algebra-gabriel-quiver-formula}
    If $Q$ is a thin dimer model with decorated permutation $\pi$, then $I^\pi$ is an admissible ideal of $Q^\pi$ and there is an isomorphism $B_Q\cong \k Q^{\pi}/I^{\pi}$.
\end{cor}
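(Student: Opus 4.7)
The plan is to bootstrap off of Theorem~\ref{thm:boundary-algebra-main}, which presents $B_Q$ as $\k Q_\circ^\pi / I_\circ^\pi$, by eliminating those generators $x_i$ and $y_i$ of $Q_\circ^\pi$ which are not arrow-defining. For each such non-arrow-defining $x_i$, since $x_i$ is minimal, any equivalent path factoring through other boundary vertices decomposes into minimal paths between consecutive boundary vertices; by uniqueness of minimal path-equivalence classes under thinness (Definition~\ref{defn:algebraic-consistency}), this decomposition $x'_i$ is well-defined as an element of $B_Q$. Hence $[x_i] = [x'_i]$ in $B_Q$, and Theorem~\ref{thm:boundary-algebra-main} gives $[x_i] - [x'_i] \in I_\circ^\pi$. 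The analogous statement for non-arrow-defining $y_i$ is symmetric.

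Next, I would produce mutually inverse isomorphisms between $\k Q^\pi/I^\pi$ and $\k Q_\circ^\pi/I_\circ^\pi$. The inclusion of quivers $Q^\pi \hookrightarrow Q_\circ^\pi$ extends to an algebra map $\k Q^\pi \to \k Q_\circ^\pi$; since $[x'_i] \equiv [x_i]$ and $[y'_i] \equiv [y_i]$ modulo $I_\circ^\pi$, this map sends each generator of $I^\pi$ to an element congruent modulo $I_\circ^\pi$ to the corresponding generator of $I_\circ^\pi$, and so descends to the quotient. Conversely, the algebra map $\k Q_\circ^\pi \to \k Q^\pi$ substituting each non-arrow-defining $x_i$ by $x'_i$ (and similarly for $y_i$), while fixing every other arrow, carries generators of $I_\circ^\pi$ to generators of $I^\pi$ and hence also descends to the quotient. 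These two maps are mutually inverse by construction, and both respect cancellative closure since the generating relations on each side correspond bijectively.

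Finally, admissibility of $I^\pi$ reduces to checking that each defining relation is a difference of parallel paths of length at least $2$ in $Q^\pi$. The commutation relations $[x'y'] - [y'x']$ have both sides of length $2$. The nonadjacent relations $[(y_{t(p)-1}')^{\reach_y(p)}] - [p(x'y')^{\Y(p)}]$ have both sides of length at least $2$, since nonadjacency of $p$ forces $\reach_y(p) \geq 2$ and $\Y(p) \geq 1$ (if $\Y(p) = 0$, then $[p] = [y_{t(p)-1}^{\reach_y(p)}]$ would force $p$ to factor through an intermediate boundary vertex, contradicting arrow-definingness). The main subtlety lies in the Grassmannian relation $[(x')^k] - [(y')^{n-k}]$ when $k = 1$ (or dually $n-k = 1$): in this regime the identity $[x_i] = [y_{i-1}^{n-1}]$ in $B_Q$ forces $x_i$ to factor through $i-1$ for $n \geq 3$, so by Theorem~\ref{thm:main-strand} no $x_i$ is arrow-defining and each $x'_i$ has length at least $2$. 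A cleaner alternative is to observe that $Q^\pi$ is by construction precisely the Gabriel quiver of $B_Q$ (as its arrows correspond bijectively with arrow-defining paths via Lemma~\ref{lem:arrow-defining-defines-arrow} and Theorem~\ref{thm:main-strand}), making admissibility of the defining ideal automatic.
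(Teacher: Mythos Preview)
Your approach matches what the paper intends by stating this as an unproven corollary: pass from Theorem~\ref{thm:boundary-algebra-main} by eliminating the non-arrow-defining adjacent arrows and substituting the decompositions $x_i'$, $y_i'$. The mutually inverse maps you describe are correct; the key observation (which you could make more explicit) is that the substitution map $\k Q_\circ^\pi\to\k Q^\pi$ sends \emph{paths to paths}, so the preimage of a cancellative ideal is again cancellative, and hence cancellative closures correspond under the two maps.

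One point needs correction. Your first admissibility argument---checking that each \emph{generator} of $I^\pi$ is a difference of paths of length at least~$2$---does not suffice, because $I^\pi$ is a cancellative \emph{closure}: cancelling a common prefix or suffix from a relation in $R^2$ can in principle produce a relation outside $R^2$. Your ``cleaner alternative'' is the actual argument and should be presented as such rather than as an afterthought. Once $\k Q^\pi/I^\pi \cong B_Q$ is established and the arrows of $Q^\pi$ are exactly the arrow-defining paths (Lemma~\ref{lem:arrow-defining-defines-arrow}), any element of $I^\pi$ expressing an arrow as a combination of longer paths would contradict arrow-definingness: path-equivalence classes form a $\k$-basis of $B_Q$, and an arrow-defining path is by definition not equivalent to any boundary path factoring through an additional boundary vertex, hence not equivalent to any length-$\geq 2$ path of $Q^\pi$. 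This forces $I^\pi\subseteq R^2$.
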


Corollary~\ref{cor:boundary-algebra-gabriel-quiver-formula} then gives an \emph{admissible} presentation $\k Q^\pi/I^\pi$ of the boundary algebra of a connected positroid $\pi$.

\begin{example}\label{ex:22}
	We return to our running Example~\ref{ex:2}. We already calculated the Gabriel quiver based on the strand diagram (and decorated permutation and Grassmann necklace). There is one nonadjacent arrow $\alpha:3\to 1$ with ${\X}(\alpha)=2$ and ${\Y}(\alpha)=1$. This gives the relations
	\[[\alpha(xy)^2]=[x_3^4]\textup{ and }[\alpha(xy)]=[y_2^2].\]
	The decorated permutation given by the strand diagram is $2 5 6 1 3 4$ (in one-line notation) and has 3 noninversions, so $k=3$. Hence, the Grassmannian relations are $[xy]=[yx]$ and $[x^3]=[y^3]$.
	By adding an arrow $\alpha$ from 3 to 1 to the circle quiver $Q_{k,n}$, we obtain $Q^{\pi}_{\circ}$, pictured in the center of Figure~\ref{fig:25}. By Theorem~\ref{thm:boundary-algebra-main}, taking the cancellative closure of the ideal generated by
	\[\big([xy]-[yx]\big),\ \big([x^3]-[y^3]\big),\ \big([\alpha(xy)]-[y_2^2]\big)\]
	gives the ideal $I_\circ^\pi$ such that $B_Q\cong\k Q_\circ^\pi/I_\circ^\pi$.

	In this case, we may give a more concrete description of the relations of $B_Q$.
	As calculated in Example~\ref{ex:2}, one may start from the relations $[\alpha(xy)^2]=[x_3^4]$ and $[\alpha(xy)]=[y_2^2]$ and use that $[xy]$ commutes with everything and the relation $[xy]=[yx]$ to calculate
	\[\begin{matrix}
		[\alpha(yx)^2]=[x_3^4] & [(xy)\alpha(yx)]=[x_3^4] & [(xy)^2\alpha]=[x_3^4] &  [\alpha(xy)]=[y_2^2] & [(yx)\alpha]=[y_2^2] \\
		[\alpha y^2]=[x_3^2] & [y\alpha y]=[x_4^2] & [y^2\alpha]=[x_5^2] &
		[\alpha x]=[y_2] & [x\alpha]=[y_1],
	\end{matrix} \]
	where the bottom row is obtained by cancelling the $x$'s from the extreme sides of the top row. The relations of the bottom row are the cyclic sets of relations associated to $\alpha$ as in Proposition~\ref{thm:cyc-set}. In this case, these relations (along with the Grassmannian relations) generate the ideal $I^{\pi}_{\circ}$; this is not true for arbitrary thin dimer models, as we will see in Example~\ref{ex:fin}. 

	Note that $I^{\pi}_{\circ}$ is not admissible as it contains the relations $[\alpha x]=[y_2]$ and $[x\alpha]=[y_1]$. Hence, to get $Q^{\pi}$ one observes as in Example~\ref{ex:2} that $y_1$ and $y_2$ are not arrow-defining and removes them from $Q^{\pi}_{\circ}$. Now, to get the (admissible) ideal $I^{\pi}$, we set 

	\[x'_i:=x_i \text{ for }i\in[6], \text{ and } y'_i=\begin{cases}
		y_i & i\in\{3,4,5,6\} \\
		x_2\alpha & i=2 \\
		\alpha x_1 & i=3,
	\end{cases}\] 
	and we take the ideal generated by the remaining relations:
	\[I^{\pi}:=\left\langle\big([x'y']-[y'x']\big),\ \big([(x')^3]-[(y')^3]\big),\ 
	\big([\alpha (y')^2]-[(x'_3)^2]\big) + \big([y'\alpha y']-[(x'_4)^2]\big) + \big([(y')^2\alpha]-[(x'_5)^2]\big)\right\rangle.\]
	\begin{figure}[H]
		\centering
		\def\svgscale{0.21}
		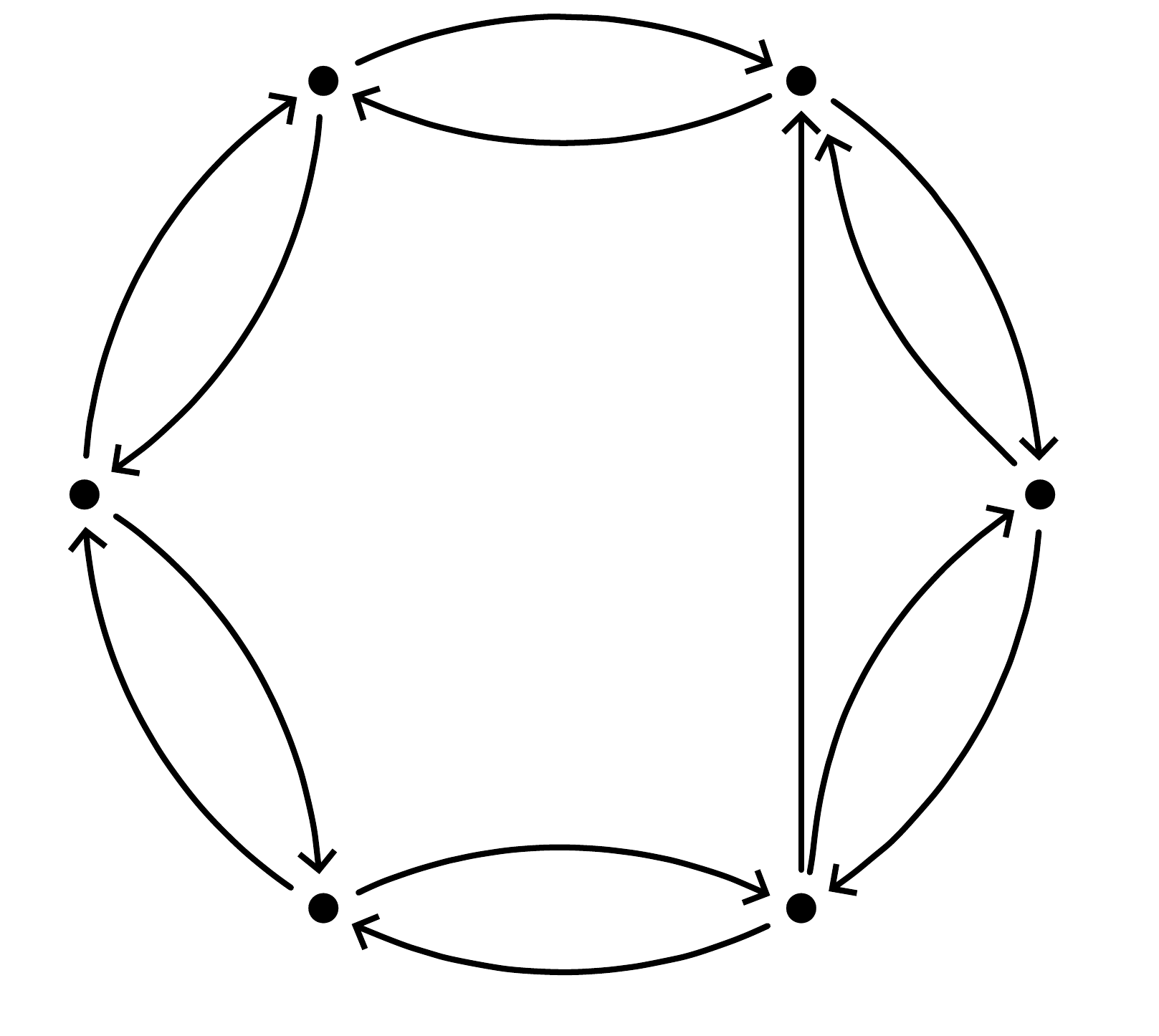
		\hspace{1cm}
		\def\svgscale{0.21}
		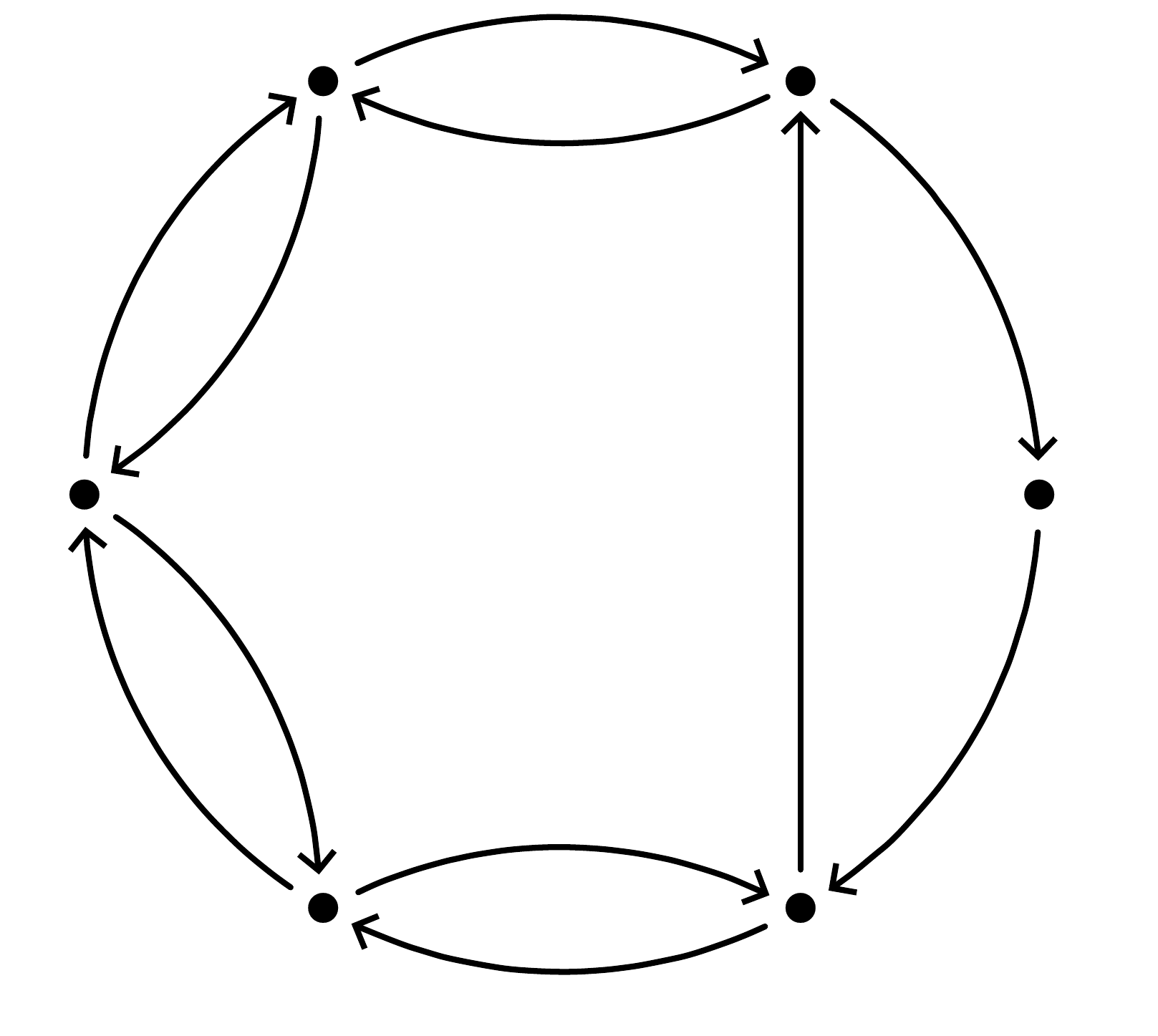
		\caption{Shown are the quivers $Q^{\pi}_{\circ}$ and $Q^{\pi}$ of the dimer model of Example~\ref{ex:2} (Figure~\ref{fig:ex2}).}
		\label{fig:25}
	\end{figure}

\end{example}

\begin{example}\label{ex:fin}
	Let $\pi$ be the decorated permutation of $[n]$ given by $4 5 8 2 9 1 6 7 3$ in one-line notation. This permutation has $5$ inversions and $k=4$ noninversions.
	On the left of Figure~\ref{fig:fin} is shown a visual representation of this decorated permutation. The strands of $\CL(9,6)$ and $\CC(9,6)$ are highlighted, and one may check that the conditions of Theorem~\ref{thm:main-perm} are satisfied for there to be an arrow $\alpha:6\to 9$ with relation number ${\Y}(\alpha)=|\CL(9,6)|=3$. One may similarly check the existence of an arrow $\beta:2\to 5$ with relation number ${\Y}(\beta)=2$. There are no other nonadjacent arrows of the boundary algebra. 
	By adding these arrows to the circle quiver, we obtain $Q^{\pi}_{\circ}$, pictured in the middle of Figure~\ref{fig:fin}. By Theorem~\ref{thm:boundary-algebra-main}, when we take $I^{\pi}_{\circ}$ to be the cancellative closure of the ideal generated by
\[\big([xy]-[yx]\big)+\big([x^4]-[y^5]\big)+\big([\alpha(xy)]-[y_2^2]\big),\]
	we obtain the boundary algebra $B_Q\cong\k Q^{\pi}_{\circ}/I^{\pi}_{\circ}$.

	Theorem~\ref{thm:main-perm} shows that $x_6$, $x_7$, and $x_8$ are the only adjacent arrows absent from the Gabriel quiver of the boundary algebra. By removing these arrows and adjusting the relations accordingly, we obtain an admissible bound path algebra $B_Q\cong\k Q^{\pi}/I^{\pi}$.
	\begin{figure}[H]
		\centering
		\def\svgscale{0.21} 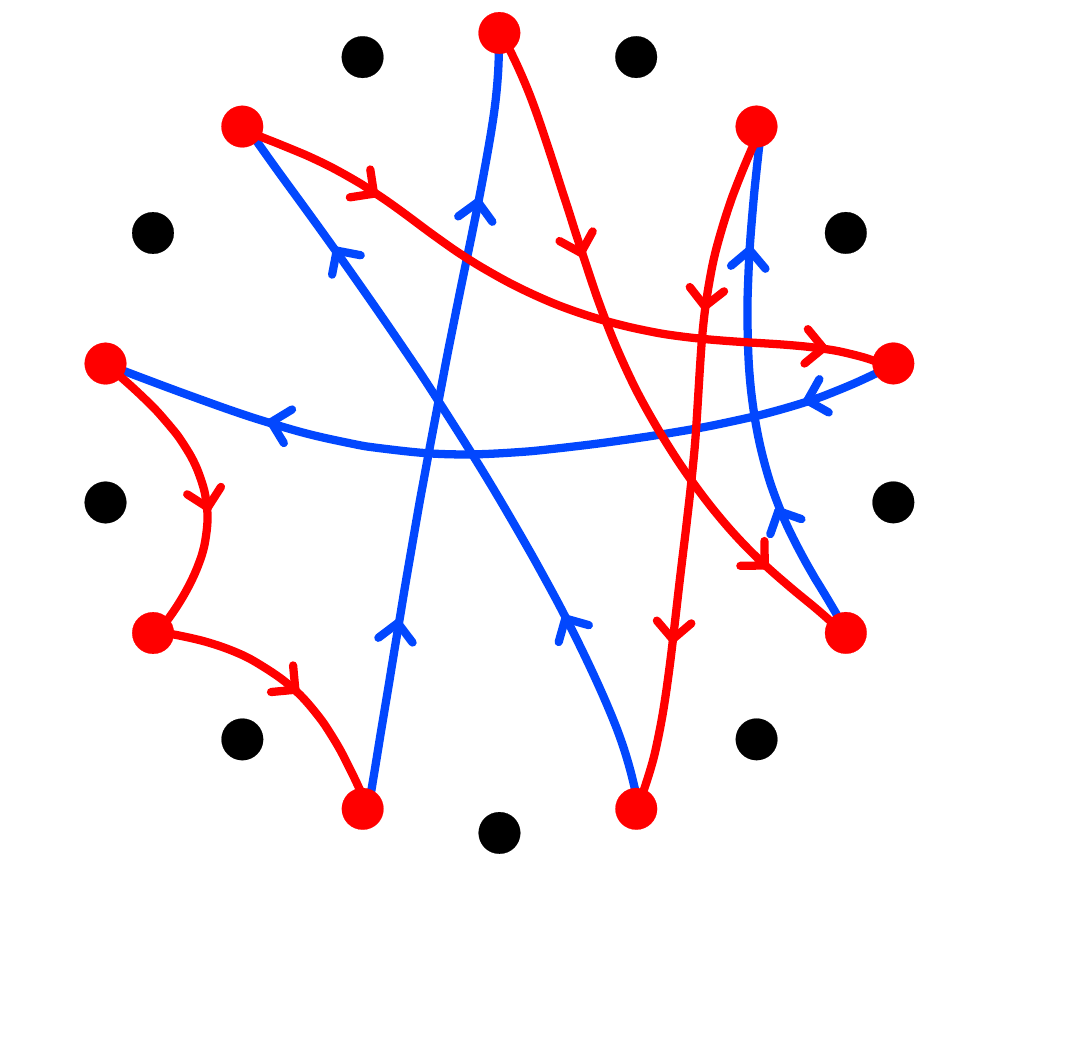
		\hspace{.5cm}\def\svgscale{0.21}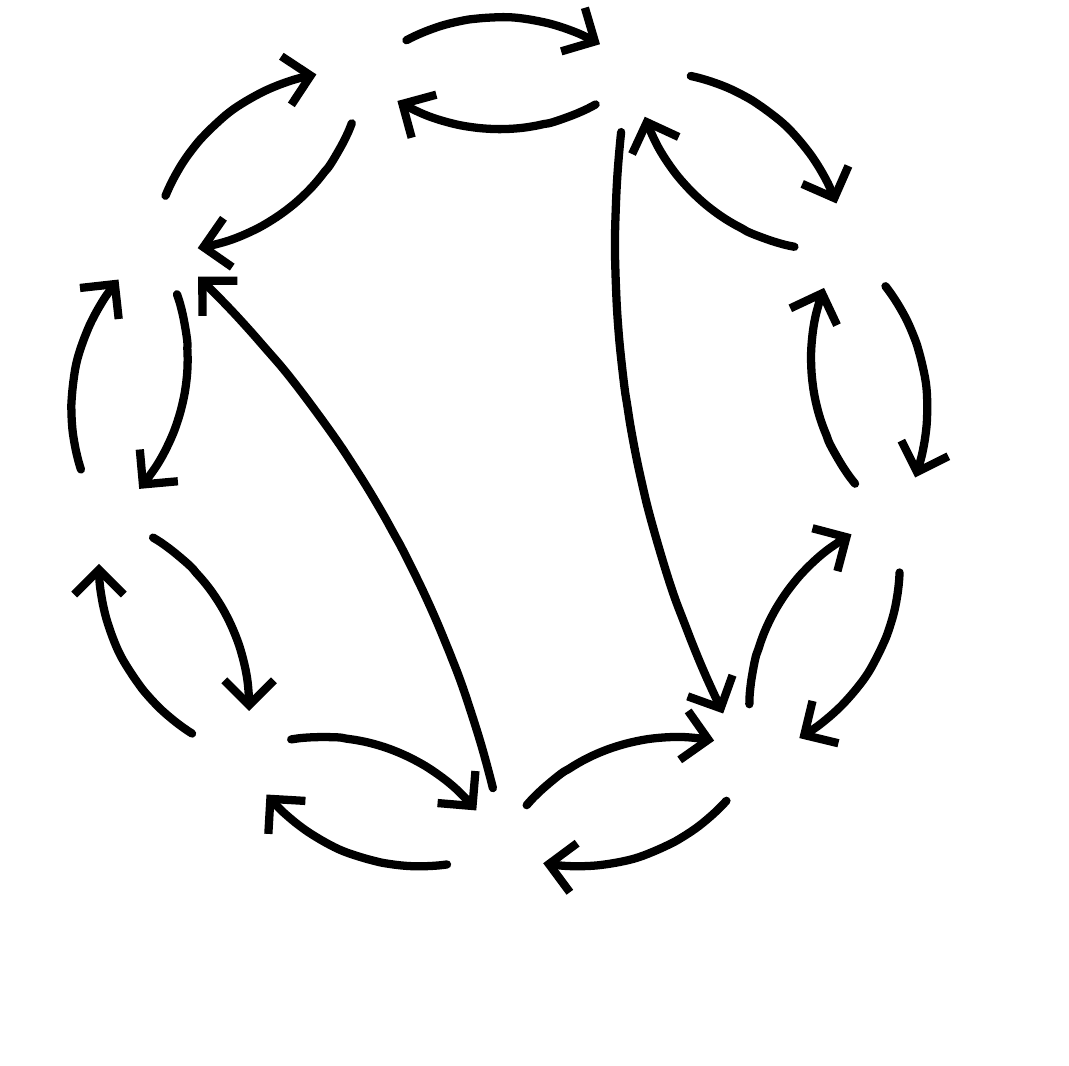
		\hspace{.5cm}\def\svgscale{0.21}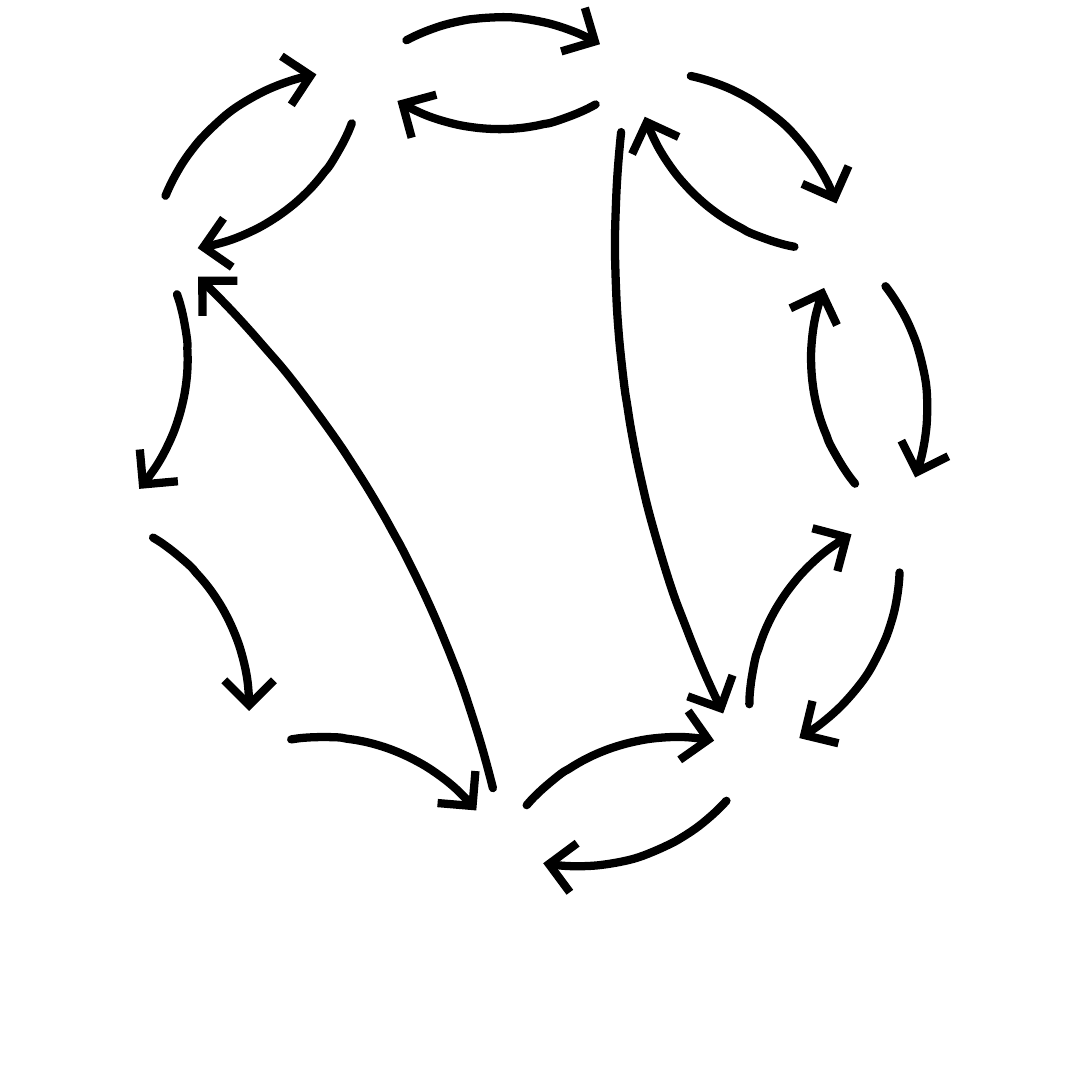
		\caption{Shown on the left is a visual representation of the permutation $458291673$ and the nonadjacent arrow-defining paths and relation numbers obtained from a thin dimer model with that permutation. The ``strands'' of $\CL(9,6)$ and $\CC(9,6)$ are in red, and the others are blue. In the middle is the quiver $Q^{\pi}_{\circ}$ and on the right is the quiver $Q^{\pi}$.}
		\label{fig:fin}
	\end{figure}

	We may calculate the cyclic sets of relations (introduced in Proposition~\ref{thm:cyc-set}) of the nonadjacent arrows ${\alpha}$ and $b$ as follows:
	\begin{align*}
		[x_2^2]=[\beta y] && [x_3^2]=[y\beta] && [\beta x^2]=[y_1^4] && [x{\beta}x]=[y_9^4] && [x^2{\beta}]=[y_8^4]
	\end{align*}
	\vspace{-.3 in}
	\begin{align*}
		[x_6]=[{\alpha}y^2] && [x_7]=[y{\alpha}y] && [x_8]=[y^2{\alpha}] && [{\alpha}x^3]=[y_5^3] && [x{\alpha}x^2]=[y_4^3] && [x^2{\alpha}x]=[y_3^3] && [x^3{\alpha}]=[y_2^3]
	\end{align*}

	In the dimer model of Example~\ref{ex:22}, we saw that the cyclic sets of relations and Grassmannian relations generated the ideal $I_\circ^\pi$ (without needing to take a cancellative closure); we will now show that this is not the case in the current example. We calculate as follows:
	\begin{align*}
		[x_1{\beta}x{\alpha}x^3]&=[(x_1{\beta}x)({\alpha}x^3)]\\
		&=[(y_9^4)(y_5^3)] && ([x{\beta}x]=[y_9^4]\text{ and }[{\alpha}x^3]=[y_5^3])\\
		&=[y_9^7]\\
		&=[y_9^2x^4] && ([x^4]=[y^5])\\
		&=[x_1y^2x^3] && ([xy]=[yx])
	\end{align*}
	We may cancel $[x]$ from the left and $[x^3]$ from the right of the resulting equation $[x{\beta}x{\alpha}x^3]=[xy^2x^3]$ to get the equation $[{\beta}x{\alpha}]=[y^2]$. Hence, the relation $[{\beta}x{\alpha}]=[y^2]$ is in the ideal $I_\circ^\pi$; on the other hand, this relation is not in the ideal generated by the cyclic sets of relations and the Grassmannian relations without the use of cancellative closure. We then see that the cancellative closure is necessary in Theorem~\ref{thm:boundary-algebra-main}. In future work, we will obtain a canonical minimal generating set for the ideal $I_\circ^\pi$ for any connected permutation $\pi$.
\end{example}
\begin{remk}
	We now make the mild assumption that $n\geq3$. This excludes a small number of degenerate examples~\cite[Remark 2.2]{ZPressland2015}. Under this assumption, we may let $Q$ be a thin dimer model \emph{with no digons} whose decorated permutation is $\pi$ by taking the dimer model of a reduced Postnikov diagram with this permutation~\cite[Proposition 2.10]{ZPressland2015}.
	Associated to this dimer model is a \emph{weakly separated collection} $\mathcal S_Q$, which associates to each vertex $v$ of $Q$ a $k$-subset $S_v\in{[n]\choose k}$.
	There is a map $\phi:\AA_Q\to\widetilde\Pi^\circ(\mathcal P_\pi)$ from the cluster algebra $\AA_Q$ to the open positroid variety $\widetilde\Pi^\circ(\mathcal P_\pi)$ mapping the initial cluster variable at a vertex $v$ of $Q$ to the Pl\"ucker coordinate given by the $S_v$. This realizes the cluster structure given in~\cite{GLZ}.
	We know from~\cite{ZPressland2015} that $\GP(\widehat B_Q)$ categorifies $\AA_Q$, where the initial seed is given by the cluster-tilting object $eA_Q$. Hence, we may say that $\GP(\widehat B_Q)$ categorifies the cluster structure on $\widetilde\Pi^\circ(\mathcal P_\pi)$, where the initial seed of the latter (given by the weakly separated collection $\mathcal S_Q$) is given by the cluster-tilting object $eA_Q$.

Say we choose a different dimer model $Q'$ with the same decorated permutation $\pi$. The categorification $\GP(\widehat B_Q)$ is given by the same category with a different initial seed, now given by the cluster-tilting object $eA_{Q'}$. This categorifies the same cluster structure on $\widetilde\Pi^\circ(\mathcal P_\pi)$, with the caveat that the initial seed is now given by the Pl\"ucker coordinates of elements of the weakly separated collection $\mathcal S_{Q'}$. Hence, changing the choice of dimer model $Q$ with a fixed decorated permutation $\pi$ corresponds to changing the choice of initial seed on a fixed cluster structure of $\widetilde\Pi^\circ(\mathcal P_\pi)$ and $\GP(\widehat B_Q)$.
\end{remk}

\bibliographystyle{plain}
\bibliography{biblio}

\end{document}